\definecolor{darkgreen}{rgb}{0,0.45,0}
\DeclareMathOperator{\colim}{colim}
\DeclareMathOperator{\im}{Im}
\newcommand{\cat}[1]{\mathbf{#1}}
\newcommand{\op}{\mathrm{op}}
\newcommand{\thg}{{\mathord{\text{--}}}}
\newcommand{\spn}[1]{{\left<{#1}\right>}}
\newcommand{\defeq}{\mathrel{\mathop:}=}
\newcommand{\cd}[2][]{\vcenter{\hbox{\xymatrix#1{#2}}}}
\renewcommand{\phi}{\varphi}
\newcommand{\A}{{\mathscr A}}
\newcommand{\B}{{\mathscr B}}
\newcommand{\C}{{\mathscr C}}
\newcommand{\D}{{\mathscr D}}
\newcommand{\E}{{\mathscr E}}
\newcommand{\F}{{\mathscr F}}
\newcommand{\I}{{\mathscr I}}
\newcommand{\K}{{\mathscr K}}
\renewcommand{\L}{{\mathscr L}}
\newcommand{\M}{{\mathscr M}}
\renewcommand{\P}{{\mathscr P}}
\newcommand{\Q}{{\mathscr Q}}
\newcommand{\Ss}{{\mathscr S}}
\newcommand{\T}{{\mathscr T}}
\newcommand{\V}{{\mathscr V}}
\newcommand{\xtor}[1]{\cdl[@1]{{} \ar[r]|-{\object@{|}}^{#1} & {}}}
\def\hookleftarrowfill@{\arrowfill@\leftarrow\relbar{\relbar\joinrel\rhook}}
\def\twoheadleftarrowfill@{\arrowfill@\twoheadleftarrow\relbar\relbar}
\def\leftbararrowfill@{\arrowdoublefill@{\leftarrow\mkern-5mu}\relbar\mapstochar\relbar\relbar}
\def\Leftbararrowfill@{\arrowdoublefill@{\Leftarrow\mkern-2mu}\Relbar\Mapstochar\Relbar\Relbar}
\def\leftringarrowfill@{\arrowdoublefill@{\leftarrow\mkern-3mu}\relbar{\mkern-3mu\circ\mkern-2mu}\relbar\relbar}
\def\lefttriarrowfill@{\arrowfill@{\mathrel\triangleleft\mkern0.5mu\joinrel\relbar}\relbar\relbar}
\def\Lefttriarrowfill@{\arrowfill@{\mathrel\triangleleft\mkern1mu\joinrel\Relbar}\Relbar\Relbar}
\def\hookrightarrowfill@{\arrowfill@{\lhook\joinrel\relbar}\relbar\rightarrow}
\def\twoheadrightarrowfill@{\arrowfill@\relbar\relbar\twoheadrightarrow}
\def\rightbararrowfill@{\arrowdoublefill@{\relbar\mkern-0.5mu}\relbar\mapstochar\relbar\rightarrow}
\def\Rightbararrowfill@{\arrowdoublefill@{\Relbar\mkern-2mu}\Relbar\Mapstochar\Relbar\Rightarrow}
\def\rightringarrowfill@{\arrowdoublefill@\relbar\relbar{\mkern-2mu\circ\mkern-3mu}\relbar{\mkern-3mu\rightarrow}}
\def\righttriarrowfill@{\arrowfill@\relbar\relbar{\relbar\joinrel\mkern0.5mu\mathrel\triangleright}}
\def\Righttriarrowfill@{\arrowfill@\Relbar\Relbar{\Relbar\joinrel\mkern1mu\mathrel\triangleright}}
\def\leftrightarrowfill@{\arrowfill@\leftarrow\relbar\rightarrow}
\def\mapstofill@{\arrowfill@{\mapstochar\relbar}\relbar\rightarrow}
\renewcommand*\xleftarrow[2][]{\ext@arrow 20{20}0\leftarrowfill@{#1}{#2}}
\providecommand*\xLeftarrow[2][]{\ext@arrow 60{22}0{\Leftarrowfill@}{#1}{#2}}
\providecommand*\xhookleftarrow[2][]{\ext@arrow 10{20}0\hookleftarrowfill@{#1}{#2}}
\providecommand*\xtwoheadleftarrow[2][]{\ext@arrow 60{20}0\twoheadleftarrowfill@{#1}{#2}}
\providecommand*\xleftbararrow[2][]{\ext@arrow 10{22}0\leftbararrowfill@{#1}{#2}}
\providecommand*\xLeftbararrow[2][]{\ext@arrow 50{24}0\Leftbararrowfill@{#1}{#2}}
\providecommand*\xleftringarrow[2][]{\ext@arrow 10{26}0\leftringarrowfill@{#1}{#2}}
\providecommand*\xlefttriarrow[2][]{\ext@arrow 80{24}0\lefttriarrowfill@{#1}{#2}}
\providecommand*\xLefttriarrow[2][]{\ext@arrow 80{24}0\Lefttriarrowfill@{#1}{#2}}
\renewcommand*\xrightarrow[2][]{\ext@arrow 01{20}0\rightarrowfill@{#1}{#2}}
\providecommand*\xRightarrow[2][]{\ext@arrow 04{22}0{\Rightarrowfill@}{#1}{#2}}
\providecommand*\xhookrightarrow[2][]{\ext@arrow 00{20}0\hookrightarrowfill@{#1}{#2}}
\providecommand*\xtwoheadrightarrow[2][]{\ext@arrow 03{20}0\twoheadrightarrowfill@{#1}{#2}}
\providecommand*\xrightbararrow[2][]{\ext@arrow 01{22}0\rightbararrowfill@{#1}{#2}}
\providecommand*\xRightbararrow[2][]{\ext@arrow 04{24}0\Rightbararrowfill@{#1}{#2}}
\providecommand*\xrightringarrow[2][]{\ext@arrow 01{26}0\rightringarrowfill@{#1}{#2}}
\providecommand*\xrighttriarrow[2][]{\ext@arrow 07{24}0\righttriarrowfill@{#1}{#2}}
\providecommand*\xRighttriarrow[2][]{\ext@arrow 07{24}0\Righttriarrowfill@{#1}{#2}}
\providecommand*\xmapsto[2][]{\ext@arrow 01{20}0\mapstofill@{#1}{#2}}
\providecommand*\xleftrightarrow[2][]{\ext@arrow 10{22}0\leftrightarrowfill@{#1}{#2}}
\providecommand*\xLeftrightarrow[2][]{\ext@arrow 10{27}0{\Leftrightarrowfill@}{#1}{#2}}
\newcommand{\twocong}[2][0.5]{\ar@{}[#2] \save ?(#1)*{\cong}\restore}
\newcommand{\twoeq}[2][0.5]{\ar@{}[#2] \save ?(#1)*{=}\restore}
\newcommand{\rtwocell}[3][0.5]{\ar@{}[#2] \ar@{=>}?(#1)+/l 0.2cm/;?(#1)+/r 0.2cm/^{#3}}
\newcommand{\ltwocell}[3][0.5]{\ar@{}[#2] \ar@{=>}?(#1)+/r 0.2cm/;?(#1)+/l 0.2cm/^{#3}}
\newcommand{\ltwocello}[3][0.5]{\ar@{}[#2] \ar@{=>}?(#1)+/r 0.2cm/;?(#1)+/l 0.2cm/_{#3}}
\newcommand{\dtwocell}[3][0.5]{\ar@{}[#2] \ar@{=>}?(#1)+/u  0.2cm/;?(#1)+/d 0.2cm/^{#3}}
\newcommand{\dltwocell}[3][0.5]{\ar@{}[#2] \ar@{=>}?(#1)+/ur  0.2cm/;?(#1)+/dl 0.2cm/^{#3}}
\newcommand{\drtwocell}[3][0.5]{\ar@{}[#2] \ar@{=>}?(#1)+/ul  0.2cm/;?(#1)+/dr 0.2cm/^{#3}}
\newcommand{\dthreecell}[3][0.5]{\ar@{}[#2] \ar@3{->}?(#1)+/u  0.2cm/;?(#1)+/d 0.2cm/^{#3}}
\newcommand{\utwocell}[3][0.5]{\ar@{}[#2] \ar@{=>}?(#1)+/d 0.2cm/;?(#1)+/u 0.2cm/_{#3}}
\newcommand{\dtwocelltarg}[3][0.5]{\ar@{}#2 \ar@{=>}?(#1)+/u  0.2cm/;?(#1)+/d 0.2cm/^{#3}}
\newcommand{\utwocelltarg}[3][0.5]{\ar@{}#2 \ar@{=>}?(#1)+/d  0.2cm/;?(#1)+/u 0.2cm/_{#3}}
\theoremstyle{definition}
\theoremstyle{plain}
\newtheorem{Thm}[subsection]{Theorem}
\newtheorem{Prop}[subsection]{Proposition}
\newtheorem{Cor}[subsection]{Corollary}
\newtheorem{Lemma}[subsection]{Lemma}
\numberwithin{equation}{section}
\theoremstyle{definition}
\theoremstyle{remark}
\newtheorem{Rk}[subsection]{Remark}
\newcommand{\Lan}{\mathrm{Lan}}
\newcommand{\Ran}{\mathrm{Ran}}
\newcommand{\plex}{\P_l}
\newcommand{\submonad}[1]{{{#1}_l}}
\newcommand{\sat}[1]{{{#1}^\ast}}
\newcommand{\app}[2]{{{#1}_l #2 }}
\newcommand{\appr}[3]{{{#1}_{#2} #3 }}
\begin{document}
 \leftmargini=2em
\title{Lex colimits}
\author{Richard Garner}
\address{Department of Computing, Macquarie University, North Ryde, NSW 2109, Australia}
\email{richard.garner@mq.edu.au}
\author{Stephen Lack}
\address{Department of Mathematics, Macquarie University, North Ryde, NSW 2109, Australia}
\email{steve.lack@mq.edu.au} \subjclass[2000]{Primary: 18A35, 18E10; Secondary:
18B15, 18B25}
\date{\today}
\thanks{Both authors acknowledge the support of the Australian Research Council and DETYA}
\begin{abstract}
Many kinds of categorical structure require the existence of finite limits, of
colimits of some specified type, and of ``exactness'' conditions between the finite
limits and the specified colimits. Some examples are the notions of regular, or
Barr-exact, or lextensive, or coherent, or adhesive category. We introduce a
general notion of exactness, of which each of the structures listed above, and
others besides, are particular instances. The notion can be understood as a
form of cocompleteness ``in the lex world''---more precisely, in the
$2$-category of finitely complete categories and finite-limit preserving
functors.
\end{abstract}

 \maketitle

\section{Introduction} Amongst the range of structures which it has been found
mathematically useful to impose upon a category, we find a number which share
the following common form. One requires the provision of finite limits; the
provision of colimits of some specified type; and the validation of certain
compatibilities between the finite limits and the specified colimits. For
example, in asking that a category be \emph{lextensive}, or \emph{regular}, or
\emph{Barr-exact}, or \emph{coherent}, or \emph{adhesive}, we are asking for
structure of this form, where the colimits in question comprise the finite
coproducts, or the coequalisers of kernel-pairs, or the coequalisers of
equivalence relations, or the coequalisers of kernel-pairs and the finite
unions of subobjects, or the pushouts along monomorphisms. Though the precise
nature of the limit-colimit compatibilities required varies from case to case,
it is understood that these too share a common form---roughly speaking, they
are just those compatibilities between the finite limits and the specified
colimits which hold in the category of sets; more generally, in any
Grothendieck topos; more generally still, in any $\infty$-pretopos.

The purpose of this paper is to describe a body of results which explains these
similarities of form, by exhibiting each of the structures listed above as
particular instances of a common notion: this notion being one of
``cocompleteness in the lex world''. Let us say a few words about what we mean
by this. The term ``lex'' is here used with the meaning of ``preserving finite
limits''. The etymology of this usage is that, originally, an
additive functor was called ``left exact'' if it preserved exact sequences on
the left; equivalently, if it preserved kernels; equivalently, if it preserved
all finite limits. Then ``left exact'' was abbreviated to ``lex'' and finally
this came to be used to refer to the preservation of finite limits even in the
non-additive context. There is a $2$-category $\cat{LEX}$ comprising the
finitely complete categories, the left exact functors and the natural
transformations between them, and in working in this $2$-category, we may
consider that we are working ``in the lex world''. Thus in speaking of
``cocompleteness in the lex world'', we intend to refer to a notion of
cocompleteness internal to this $2$-category $\cat{LEX}$.

Let us now expand on how such a notion permits a uniform description of each of
the structures listed above. There are two aspects to this. On the one hand, we
must be able to express the kinds of cocompleteness appearing in our examples.
On the other, we must be able to capture the corresponding limit-colimit
compatibilities.

Regarding the first of these, we introduce the following concepts. By a
\emph{class of weights for lex colimits}, we mean a collection $\Phi$ of
functors $\phi \colon \K^\op \to \cat{Set}$, with each $\K$ small and finitely
complete; and by saying that a finitely complete category $\C$ is
\emph{$\Phi$-lex-cocomplete}, we mean that, for every $\phi \colon \K^\op \to
\cat{Set}$ in $\Phi$ and every finite-limit preserving $D \colon \K \to \C$,
the weighted colimit $\phi \star D$ exists in $\C$. For instance, if
$\Phi_\mathrm{ex}$ consists of the single functor $\phi \colon \K^\op \to
\cat{Set}$, where $\K$ is the free category with finite limits generated by an
equivalence relation $(s, t) \colon R \rightarrowtail A \times A$, and where
$\phi$ is the coequaliser in $[\K^\op, \cat{Set}]$ of the maps $\K(\thg, s)$
and $\K(\thg, t)$, then for a finitely complete category $\C$ to be
$\Phi_\mathrm{ex}$-lex-cocomplete is for it to admit coequalisers of
equivalence relations. Similarly, if $\Phi_\mathrm{reg}$ consists of the single
functor $\phi \colon \K^\op \to \cat{Set}$, where $\K$ is the free category
with finite limits on an arrow $f \colon X \to Y$, and where $\phi$ is the
coequaliser in $[\K^\op, \cat{Set}]$ of the kernel pair of $\K(\thg, f)$, then
for a finitely complete category $\C$ to be $\Phi_\mathrm{reg}$-lex-cocomplete
is for it to admit coequalisers of kernel-pairs. In a similar way, we may
express the having of finite coproducts, or of unions of subobjects, or of
pushouts along monomorphisms, as notions of $\Phi$-lex-cocompleteness for
suitable classes $\Phi$.

Our second problem is that of determining, for a given class $\Phi$, the
appropriate compatibilities to be imposed between finite limits and
$\Phi$-lex-colimits. In anticipation of a successful resolution to this, we
reserve the term \emph{$\Phi$-exact} for a finitely complete and
$\Phi$-lex-cocomplete category satisfying these---as yet
undetermined---compatibilities. In due course, we will imbue this term with
meaning in such a way as to capture perfectly our examples: so that, for
instance, a category is $\Phi_\mathrm{reg}$-exact just when it is regular, or
$\Phi_\mathrm{ex}$-exact just when it is Barr-exact. It will turn out that
there are several ways of characterising the notion of $\Phi$-exactness. One of
these is that a \emph{small} $\C$ is $\Phi$-exact just when it admits a full
embedding into a Grothendieck topos via a functor preserving finite limits and
$\Phi$-lex-colimits: this captures the idea, stated above, that the
limit-colimit compatibilities we impose should be just those that obtain in any
Grothendieck topos. A second characterisation of $\Phi$-exactness, valid for
categories of any size, is given in terms of the \emph{$\Phi$-exact completion}
of a finitely complete category $\C$. This will turn out to be the value at $\C$ of a left biadjoint to the forgetful $2$-functor from the $2$-category of $\Phi$-exact categories 
to the $2$-category of finitely complete categories; it includes, for suitable choices of $\Phi$, the exact completion and the regular completion of a category with finite limits, and may be constructed as the full subcategory $\app \Phi \C$
of $[\C^\op, \cat{Set}]$ obtained by closing the representables under finite
limits and $\Phi$-lex-colimits. The second characterisation promised above is now that 
the finitely complete $\C$ is $\Phi$-exact just
when the restricted Yoneda embedding $\C \to \app \Phi \C$ admits a
finite-limit preserving left adjoint. This means that $\C$ is reflective in $\app
\Phi \C$ via a finite-limit-preserving reflector, and so as lex-cocomplete as
$\app \Phi \C$ is; in particular, $\Phi$-lex-cocomplete. Moreover, the same
compatibilities between finite limits and $\Phi$-lex-colimits as are affirmed
in $\cat{Set}$ must be also affirmed in $\app \Phi \C$---since these limits and
colimits are pointwise---and so also in $\C$. Thus this characterisation of
$\Phi$-exactness also accords with our motivating description.

Yet neither of the two \emph{characterisations} of $\Phi$-exactness given above
are satisfactory as a \emph{definition} of it: for whilst justifiable by their
describing correctly the examples we have in mind, they fail to capture the
essence of what $\Phi$-exactness is. As anticipated above, this essence resides
in the claim that $\Phi$-exactness is a transposition ``into the lex world'' of
the notion of cocompleteness with respect to a class of colimits. The force of
this claim is most easily appreciated if we adopt the perspective of monad
theory; in preparation for which, we first recast the standard notions of
cocompleteness in these terms.

When we say that  a category is cocomplete, we are asserting a property, but
this property can be made into a structure: that of being equipped with a
choice of colimits. This structure is algebraic, in the sense that there is a
pseudomonad $\P$ on $\cat{CAT}$, the $2$-category of locally small categories,
whose pseudoalgebras are categories equipped with such colimit structure; the
value of $\P$ at a category $\C$ being given by the closure of the
representables in $[\C^\op, \cat{Set}]$ under small colimits. This same
perspective applies also to notions of partial cocompleteness. For any class of
weights $\Phi$---now meaning simply a collection of presheaves with small
domain---we may again regard the property of being $\Phi$-cocomplete, that is,
of admitting $\phi$-weighted colimits for all $\phi \in \Phi$, as algebraic
structure: we have a pseudomonad $\Phi$ on $\cat{CAT}$ whose pseudoalgebras are
categories equipped with $\Phi$-colimits. We could again describe this
pseudomonad directly---its value at a category $\C$ being the closure of the
representables in $[\C^\op, \cat{Set}]$ under $\Phi$-colimits---but more
pertinently, could also derive it from the pseudomonad $\P$: it is the smallest
full submonad $\Q$ of $\P$ such that $\phi \in \Q \K$ for all $\phi \colon
\K^\op \to \cat{Set}$ in the class $\Phi$. In fact, we may recast even the
definition of a class of weights solely in terms of $\P$; it is given by the
specification, for each small category $\K$, of a full subcategory of $\P\K$.
In other words, once we have the pseudomonad $\P$, representing a notion of
cocompleteness, the corresponding notion of partial cocompleteness may be
derived by a purely formal $2$-categorical process.

This last observation allows us to give form to our claim that $\Phi$-exactness
constitutes a transposition ``into the lex world'' of the standard notion of
cocompleteness with respect to a class of weights. We will exhibit a
pseudomonad $\plex$ on the $2$-category $\cat{LEX}$ which
represents a notion of small-exactness; its pseudoalgebras are the
$\infty$-pretoposes. This $\plex$ is in fact nothing other than the restriction
and corestriction of $\P$ from $\cat{CAT}$ to $\cat{LEX}$, and so represents an
entirely canonical notion of ``cocompleteness in the lex world''. Now applying
the formal $2$-categorical process described above, we obtain the corresponding
notion of ``partial cocompleteness in the lex world'': and this will constitute
our definition of $\Phi$-exactness. The universal property of the $\Phi$-exact completion mentioned above is then an immediate consequence.

(Let us remark here that our approach is related to, but different from, the
work on \emph{Yoneda structures} described in~\cite{Street1978Yoneda}. There,
too, the authors consider an operation $\P$---this being one part of the
definition of a Yoneda structure---which, in the case of their Example~7.3,
resides on $\cat{LEX}$. As for any Yoneda structure, one may define notions of
cocompleteness or partial cocompleteness with respect to this $\P$; however,
the notions so arising are not the same as our small-exactness or
$\Phi$-exactness. For instance, as was stated above, to be small-exact in our
sense is to be an $\infty$-pretopos. The corresponding notion in the framework
of~\cite{Street1978Yoneda} is the stronger one of being
\emph{lex-total}~\cite{Street1981Notions}.)

As we have already said, the notion of $\Phi$-exactness captures perfectly our
motivating examples: however, it also allows us to move beyond those examples.
For instance, we shall see that when $\Phi$ comprises the weights for finite
unions, a finitely complete category is $\Phi$-exact just when it admits finite unions which are both  stable under
pullback and \emph{effective}---calculated as the pushout over the
intersection. Similarly, we shall see that if $\Phi$ comprises the weights for filtered colimits, then a finitely complete category is $\Phi$-exact just when it has filtered colimits which commute with finite limits; and finally, that if $\Phi$ comprises the weight for reflexive
coequalisers, then a finitely complete category is $\Phi$-exact just when it is Barr-exact and
the free equivalence relation on each reflexive relation exists, and is calculated in the same way
as in $\cat{Set}$. Let us be clear that the properties just listed are
neither new nor unexpected: what \emph{is} new is the understanding that the
structures they define stand on an equal footing with our motivating ones.

One further pleasant aspect of the theory we develop is that it works just as well for
enriched as for ordinary categories. In the $\V$-categorical setting, the
notion of $\Phi$-exactness involves inheriting limit-colimit compatibilities
from $\V$, rather than from $\cat{Set}$, which on a concrete level may cause
the theory to look quite different, for different choices of $\V$. Our formal
development will be given in the enriched context from the outset; when it
comes to examples, however, we shall limit the scope of this paper to the
motivating case $\V = \cat{Set}$, leaving applications over other bases for
future investigation. Let us at least remark that amongst these applications
are the case $\V = \cat{Ab}$---which should allow us to capture the various
exactness notions of~\cite{Grothendieck1957Sur-quelques}---and the case $\V =
\cat{Cat}$---which should allow us to provide a clear conceptual basis for
various forms of $2$-categorical exactness
\cite{Bourke2010Codescent,Bourn20102-categories,Carboni1994Modulated,Street1982Two-dimensional}

\looseness=-1
Let us now give a brief overview of the content of this paper. We begin in
Section~\ref{sec:1} by recalling the construction of the pseudomonad $\P$ on
$\V\text-\cat{CAT}$, and describing its lifting to a pseudomonad $\plex$ on
$\V\text-\cat{LEX}$. In Section~\ref{sec:phiexactness}, we go on to consider
full submonads of $\plex$, so arriving at our definition of $\Phi$-exactness.
Then in Section~\ref{sec:embedding}, we give the embedding result
described above---which, in the enriched context, states that a small,
$\Phi$-lex-cocomplete $\C$ is $\Phi$-exact just when it admits an embedding in
a ``$\V$-topos''; that is, a $\V$-category reflective in a presheaf category by
a finite-limit-preserving reflector. The more involved parts of the proof are
deferred to Section 7 and an Appendix. In Section~\ref{sec:3}, we break off
from the general theory in order to give a body of examples; as remarked above,
these examples will be concerned solely with the case where $\V = \cat{Set}$.
In Section~\ref{sec:4}, we show that, again in the case $\V = \cat{Set}$, it is
possible to give a concrete characterisation of $\Phi$-exactness for an
\emph{arbitrary} $\Phi$, in terms of Anders Kock's notion of \emph{postulated
colimit}~\cite{Kock1989Postulated}. Then in Section~\ref{sec:relative}, we
resume our development of the general theory, describing the construction of
\emph{relative completions}: that is, of the free $\Psi$-exact category on a
$\Phi$-exact one, for suitable classes of weights $\Phi$ and $\Psi$. Finally,
an Appendix proves some necessary technical results concerning localisations of
locally presentable categories.

\textbf{Acknowledgements}. We should say some words about the prehistory of
this project. Max Kelly observed that the regular completion and the exact
completion of a category with finite limits~\cite{Hu1996A-note} can be computed
as full subcategories of the presheaf category. He proposed that this should be
explained by the fact that these were ``free cocompletions in the lex world'',
and he observed that the existence of coequalisers of equivalence relations
could be seen as $\Phi$-lex-cocompleteness, in essentially the same sense
considered here, for a suitably chosen class of weights $\Phi$. He planned to
study this with the second-named author, with a view to explaining the
construction of~\cite{Hu1996A-note}, but other things intervened and the project
never progressed very far. Some years later, the first-named author encountered
a remark asserting the existence of the project in the second
author's~\cite{Lack1999A-note}; observing that this was, in fact, the only trace
of its existence, and perceiving how some basic aspects of the theory should
go, he made contact with the second author and work on the project was begun
anew, resulting in the present article. Let us observe that Kelly's original
goal is fulfilled by our Corollary~\ref{cor:biadjoint}.

\section{Cocomplete and small-exact categories}\label{sec:1}
In this section, we recall the construction of the free cocompletion
pseudomonad $\P$ on $\cat{CAT}$, and give various characterisations of its
pseudoalgebras; they are, of course, the cocomplete categories. We then
describe the lifting of $\P$ to a pseudomonad $\plex$ on the $2$-category of
finitely complete categories, and give various analogous characterisations of
the $\plex$-pseudoalgebras; we call a category bearing such algebra structure
\emph{small-exact}.

As we have already said, we shall work from the outset in the context of the
enriched category theory of~\cite{Kelly1982Basic}. Thus we fix a symmetric
monoidal closed category $\V$, and henceforth write category to mean
$\V$-category, functor to mean $\V$-functor, and so on; in particular, when we
speak of limits and colimits, we mean the \emph{weighted} (there called
\emph{indexed}) limits and colimits of~\cite[Chapter 3]{Kelly1982Basic}. We
shall also assume that $\V$ is \emph{locally finitely presentable as a closed
category} in the sense of~\cite{Kelly1982Structures}; which is to say that its
underlying category $\V_0$ is locally finitely presentable---so in particular,
complete and cocomplete---and that the finitely presentable objects are closed
under the monoidal structure. This will be necessary later to ensure that we
have a good notion of finite limit in our enriched setting.

It will also do us well to be clear on some foundational matters. We assume the
existence of an inaccessible cardinal $\infty$, and call a set \emph{small} if
of cardinality $< \infty$, and a $\V$-category \emph{small} if having only a
small set of isomorphism-classes of objects. As usual, a set or category which
is not small is called \emph{large}; we may sometimes refer to a large set as a
\emph{class}. Now when we say that a category is complete or cocomplete, we
really mean to say that it is small-complete or small-cocomplete, in the sense
of having limits or colimits indexed by weights with small domain. $\cat{Set}$
is the category of \emph{small} sets; which, means in particular that the case
$\V = \cat{Set}$ of our general notions will be concerned with \emph{locally
small} ordinary categories.

Let $\cat{CAT}$ denote the $2$-category of (possibly large) categories,
functors and natural transformations; by which we mean, of course,
$\V$-categories, $\V$-functors and $\V$-natural transformations, so that our
$\cat{CAT}$ is what might otherwise be denoted $\V$-$\cat{CAT}$. Let
$\cat{COCTS}$ denote the locally full sub-$2$-category of $\cat{CAT}$
comprising the cocomplete categories and cocontinuous functors. There is a
forgetful $2$-functor $\cat{COCTS} \to \cat{CAT}$, and by a \emph{free
cocompletion} of a category $\C$, we mean a bireflection of it along this
$2$-functor. This amounts to the provision of a category $\P \C$ and functor $Y
\colon \C \to \P \C$ with the property that, for each cocomplete $\D$, the
functor
\begin{equation}\label{eq:freecouniv}
    \cat{COCTS}(\P\C, \D) \to \cat{CAT}(\C, \D)
\end{equation}
induced by composition with $Y$ is an equivalence of categories. As is
explained in~\cite[\S 5.7]{Kelly1982Basic}, every category admits a free
cocompletion; it may be described as follows. We declare a presheaf $\phi
\colon \C^\op \to \V$ to be \emph{small} when it is the left Kan extension of
its restriction to some small full subcategory $\L$ of $ \C$, and define the
category $\P \C$ to have as objects, the small presheaves on $\C$, and
hom-objects $\P \C(\phi, \psi)$ given by the usual end formula $\int_{X \in
\C}[\phi X, \psi X]$. Note that this large end, which \emph{a priori} need not
exist in $\V$, may be calculated as the small end $\int_{X \in \L} [\phi JX,
\psi JX]$ for any $J \colon \L \hookrightarrow \C$ witnessing the smallness of
$\phi$. The functor $Y \colon \C \to \P \C$ takes $X \in \C$ to the
representable presheaf $\C(\thg, X)$. Observe that when $\C$ is small, every
presheaf on $\C$ is small, so that $\P \C = [\C^\op, \V]$ and $Y$ is the Yoneda
embedding. The following is now (a special case of) Theorem~5.35
of~\cite{Kelly1982Basic}.
\begin{Prop}
For every category $\C$, the category $\P \C$ of small presheaves on $\C$,
together with its restricted Yoneda embedding $Y \colon \C \to \P \C$, is a
free cocompletion of $\C$.
\end{Prop}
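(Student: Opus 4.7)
The plan is to construct, for each cocomplete $\D$, an explicit pseudo-inverse to the composition functor~\eqref{eq:freecouniv} and then check the required equivalence. Three things must be verified: (a) that $\P \C$ is itself cocomplete; (b) that each $F\colon \C \to \D$ with $\D$ cocomplete extends along $Y$ to a cocontinuous $\tilde F\colon \P\C \to \D$; and (c) that every cocontinuous $G\colon \P\C\to\D$ is recovered, up to isomorphism, from its restriction $GY$.

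For (a), let $W\colon \J^\op \to \V$ be a small weight and $D\colon \J \to \P\C$ a diagram. Each $DJ$ is a small presheaf, witnessed by some small full subcategory of $\C$; exploiting the smallness of $\J$ on objects and homs, all these witnessing subcategories can be assembled into a single small $\L \hookrightarrow \C$ from which every $DJ$ is left Kan extended. Because pointwise left Kan extension commutes with colimits in the extended variable, the pointwise colimit $W \star D$ in $[\C^\op,\V]$ is itself left Kan extended from $\L$ and hence is small. Thus $\P\C$ admits all small colimits, computed pointwise.

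For (b), define $\tilde F \phi \defeq \phi \star F$. Since $\phi$ is small---the left Kan extension of its restriction along some small $J\colon \L \hookrightarrow \C$---the weighted colimit $\phi \star F$ may be rewritten as $(\phi J) \star (FJ)$, a colimit indexed by a weight with small domain, which therefore exists in the cocomplete $\D$. The Yoneda lemma then yields $\tilde F \circ Y \cong F$, and interchange of colimits with weighted colimits shows that $\tilde F$ is cocontinuous.

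For (c), the key tool is the enriched density theorem: every small presheaf $\phi$ is its own canonical colimit of representables, $\phi \cong \phi \star Y$, and this colimit, reindexed along an $\L$ witnessing smallness of $\phi$, is a small weighted colimit in $\P\C$. Applying a cocontinuous $G$ then gives $G\phi \cong \phi \star (GY) = \widetilde{GY}(\phi)$, naturally in $\phi$; combined with (b) this yields the equivalence~\eqref{eq:freecouniv}. The main obstacle throughout is the bookkeeping of smallness---one must make sure that the weighted colimits computing $\tilde F$, and those appearing in the density argument, are all indexed by small weights so that they exist in any cocomplete $\D$. Once it is invoked that each small presheaf carries with it a witnessing small subcategory, the argument proceeds exactly as in the familiar case of small $\C$, for which $\P\C = [\C^\op, \V]$ and $Y$ is the ordinary Yoneda embedding.
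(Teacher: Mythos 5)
Your proof is correct and follows essentially the route the paper takes: the paper itself offers no argument beyond citing Theorem~5.35 of Kelly's \emph{Basic Concepts}, but the surrounding text describes exactly your pseudo-inverse $\phi \mapsto \phi \star F$ computed as the small colimit $\phi J \star FJ$ via a witnessing subcategory $\L$. Your steps (a)--(c) are the standard unpacking of that citation, with the smallness bookkeeping (assembling the witnessing subcategories for a small diagram into one small $\L$, and using that $\Lan_{J^{\op}}$ and restriction both preserve colimits) handled correctly; the only point left implicit is that density of the representables also gives bijectivity of~\eqref{eq:freecouniv} on natural transformations, which is routine.
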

The equivalence inverse of~\eqref{eq:freecouniv} takes a functor $F \colon \C
\to \D$ with cocomplete domain to the functor $\bar F \colon \P \C \to \D$
which sends $\phi \in \P \C$ to the weighted colimit $\phi \star F$; as before,
this large colimit, which \emph{a priori} need not exist in $\D$, may be
computed as the small colimit $\phi J \star FJ$ for any $J \colon \L
\hookrightarrow \C$ witnessing the smallness of $\phi$. Observe that $\bar F$
is equally well the left Kan extension of $F$ along $Y \colon \C \to \P \C$, by
which we mean the pointwise left Kan extension; in this paper we shall consider
no other kind.

The universal property of free cocompletion induces a pseudomonad structure on
$\P$. The action of $\P$ on morphisms sends a functor $F \colon \C \to \D$ to
the functor $\P \C \to \P \D$ obtained as the cocontinuous extension of $YF
\colon \C \to \P \D$; this is equally well the functor sending $\phi \in \P \C$
to $\Lan_{F^\op}(\phi) \in \P \D$. The unit of the pseudomonad at $\C$ is $Y
\colon \C \to \P \C$ whilst the multiplication $M \colon \P \P \C \to \P \C$ is
the cocontinuous extension of the identity functor $\P \C \to \P \C$; thus
$M(\phi) \cong \phi \star 1_{\P \C}$.

This pseudomonad is of the kind which has been called
\emph{Kock-Z\"oberlein}---see \cite{Kock1995Monads} and the references
therein---but for which we adopt, following~\cite{Kelly1997On-property-like}, the
more descriptive name \emph{lax-idempotent}. The characteristic property of
such pseudomonads is that ``structure is left adjoint to unit''; more
precisely, this means that pseudoalgebra structures on an object correspond
with left adjoint reflections for the unit map at that object. In the case of
$\P$, the admission of such a left adjoint is easily seen to coincide with the
property of being cocomplete; in more detail, we have the following result.
\begin{Prop}\label{prop:charcocomp} For a category $\C$, the following are equivalent:
\begin{enumerate}
\item $\C$ is (small-)cocomplete;
\item For each $D \colon \K \to \C$ and $\phi \in \P \K$, the colimit $\phi
    \star D$ exists in $\C$;
\item For every $\phi \in \P \C$, the colimit $\phi \star 1_\C$ exists in
    $\C$;
\item For each $D \colon \K \to \C$ with $\K$ small, the Kan extension
    $\Lan_Y D \colon \P \K \to \C$ exists;
\item For each $D \colon \K \to \C$, the Kan extension $\Lan_Y D \colon \P
    \K \to \C$ exists;
\item The Kan extension $\Lan_Y (1_\C) \colon \P \C \to \C$ exists;
\item The functor $Y \colon \C \to \P \C$ admits a left adjoint;
\item $\C$ admits a structure of $\P$-pseudoalgebra;
\item $\C$ is reflective in some $\P \D$.
\end{enumerate}
\end{Prop}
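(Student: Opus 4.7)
The plan is to run a cycle of implications, leaning heavily on two facts already in play: the pointwise computation of the Kan extension along $Y$ (so that $(\Lan_Y D)(\phi) \cong \phi \star D$), and the fact that $\P$ is lax-idempotent. The latter immediately supplies the equivalences between having an algebra structure, having a left adjoint to $Y$, having the Kan extension $\Lan_Y(1_\C)$, and being reflective in a free $\P\D$.

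For the ``geometric'' equivalences (1)--(6), I would argue as follows. The equivalence (1) $\iff$ (2) is essentially the definition of smallness of a presheaf: since every $\phi \in \P\K$ is by construction the left Kan extension of a restriction $\phi|_\L$ with $\L$ small, the colimit $\phi \star D$ reduces to the colimit $\phi|_\L \star D|_\L$ weighted by a presheaf on a small category, and conversely any small weighted colimit arises this way. For (2) $\iff$ (3), the direction $(2) \Rightarrow (3)$ is trivial (take $\K = \C$, $D = 1_\C$); conversely, given $D \colon \K \to \C$ and $\phi \in \P\K$, apply $\Lan_{D^\op}$ to a small presentation of $\phi$ to see that $\Lan_{D^\op}\phi$ lies in $\P\C$, and then use the standard isomorphism $\phi \star D \cong (\Lan_{D^\op}\phi) \star 1_\C$. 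The equivalences (2) $\iff$ (4), (2) $\iff$ (5) and (3) $\iff$ (6) are all instances of the same observation: a pointwise left Kan extension along $Y$ exists precisely when each of the relevant weighted colimits exists, and its value is given by that colimit.

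For the ``algebraic'' part, (6) $\iff$ (7) $\iff$ (8) is the defining property of a lax-idempotent pseudomonad: structure is left adjoint to unit, and the reflection is displayed, pointwise, by $\Lan_Y(1_\C)$. One may also check (6) $\Rightarrow$ (7) directly from the end formula
\begin{equation*}
\C\bigl(\Lan_Y(1_\C)(\phi),\, X\bigr) \;\cong\; \int_{Z \in \C}\bigl[\phi Z,\, \C(Z, X)\bigr] \;=\; \P\C(\phi, YX),
\end{equation*}
natural in $\phi \in \P\C$ and $X \in \C$. The implication (8) $\Rightarrow$ (9) is then trivial, since the algebra structure on $\C$ is itself a reflector $\P\C \to \C$ onto the replete image of $Y$.

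The only remaining step is (9) $\Rightarrow$ (1), and this I expect to be the most delicate point: it rests on the sub-lemma that each free cocompletion $\P\D$ is itself cocomplete, i.e., that the class of small presheaves on $\D$ is closed in $[\D^\op, \V]$ under small colimits. This is straightforward once one notes that a small colimit of small presheaves admits a small presentation obtained by combining the given presentations, but it does require one to invoke (or re-prove) the cocompleteness of $\P\D$. Granting this, any reflective subcategory of $\P\D$ is cocomplete, since colimits in $\C$ may be formed by applying the left adjoint to the corresponding colimits in $\P\D$ --- and this closes the cycle.
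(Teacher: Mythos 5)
Your proposal is correct and follows essentially the same route as the paper: the pointwise formula $\Lan_Y D(\phi) \cong \phi \star D$ handles (1)--(6), lax-idempotence and the Kan-extension/adjoint relationship handle (6)--(9), and (9) $\Rightarrow$ (1) rests on the cocompleteness of $\P\D$ together with reflectivity. The extra direct implications you supply (such as (3) $\Rightarrow$ (2) via $\Lan_{D^\op}$) are correct but redundant, since the paper obtains them by closing the cycle.
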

\begin{proof}
(1) $\Rightarrow$ (2) by the preceding observations; (2) $\Rightarrow$ (3)
trivially. (1) $\Leftrightarrow$ (4), (2) $\Leftrightarrow$ (5) and (3)
$\Leftrightarrow$ (6) since $\Lan_Y D(\phi) \cong \phi \star D$. (6)
$\Rightarrow$ (7) since $\Lan_Y (1_\C) \colon \P \C \to \C$ is, by basic
properties of Kan extensions, left adjoint to $Y$. (7) $\Leftrightarrow$ (8)
because $\P$ is lax-idempotent. (7) $\Rightarrow$ (9) since $Y$ is fully
faithful, so that if it admits a left adjoint, then $\C$ is reflective in $\P
\C$. (9) $\Rightarrow$ (1) since any category reflective in a cocomplete
category is cocomplete.
\end{proof}
We now consider the interaction between the pseudomonad $\P$ and finite limit
structure on a category. We begin by recalling from~\cite{Kelly1982Structures}
some necessary definitions. A \emph{finite weight} is a functor $\phi \colon
\K^\op \to \V$ such that $\K$ has a finite set of isomorphism-classes of
objects, with each hom-object $\K(X, Y)$ and each $\phi(X)$ being finitely
presentable in $\V$. A weighted limit is called \emph{finite} if its weight is
finite, a category is \emph{finitely complete} if it admits all finite limits,
and a functor between finitely complete categories is \emph{left exact} if it
preserves finite limits; we sometimes write \emph{lex} for \emph{left exact}.
The proof of the following result is now contained in Proposition 4.3 and
Remark 6.6 of~\cite{Day2007Limits}; though in the case $\V = \cat{Set}$, the
result is much older. Since we shall not need the details of the proof in what
follows, we do not recount them here.
\begin{Prop}\label{prop:lift}\hfill
\begin{enumerate}
\item If $\C$ is finitely complete, then so is $\P \C$;
\item If $F \colon \C \to \D$ is left exact, then so is $\P F \colon \P \C
    \to \P \D$;
\item For any finitely complete $\C$, both $Y \colon \C \to \P \C$ and $M \colon \P \P \C \to
    \P \C$ are left exact.
\end{enumerate}
\end{Prop}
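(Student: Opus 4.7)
The plan is to build everything on one technical lemma: if $\iota : \L \to \C$ is a left exact functor with $\L$ small and finitely complete, then the left Kan extension $\Lan_{\iota^\op} : [\L^\op, \V] \to [\C^\op, \V]$ preserves finite limits. Via the coend formula $\Lan_{\iota^\op}\phi(X) = \int^L \C(X, \iota L) \otimes \phi L$, this reduces to showing that each functor $\C(X, \iota \thg) : \L \to \V$ is flat. Since any such functor is the composite of the lex representable $\C(X, \thg)$ with the lex $\iota$, it is itself lex, and the enriched analogue of the classical ``lex equals flat'' theorem for functors out of a small finitely complete category---which holds precisely because $\V$ is locally finitely presentable as a closed category---then supplies the required flatness.

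For (1), given a finite weighted diagram $(D, W)$ in $\P\C$, I would first choose a single small subcategory $\L \subseteq \C$, closed under finite limits in $\C$, witnessing the smallness of every component $D_j$; this is possible since the closure of a finite collection of small subcategories under finite limits in $\C$ remains small. The key lemma, applied to the inclusion $\iota : \L \hookrightarrow \C$, then identifies the pointwise limit $\lim_W D_j$ in $[\C^\op, \V]$ with $\Lan_{\iota^\op}(\lim_W (D_j|_\L))$, which is small by construction; a routine check confirms that this object serves as the weighted limit in $\P\C$, and the argument yields as a corollary that finite limits in $\P\C$ are computed pointwise.

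For (2), the same choice of $\L$ applies: Kan extension calculus gives $\P F \circ \Lan_{\iota^\op} = \Lan_{(F\iota)^\op}$, and since $F\iota : \L \to \D$ remains left exact, the key lemma applied to $F\iota$ shows that $\Lan_{(F\iota)^\op}$ preserves finite limits; the preservation of finite limits by $\P F$ then follows by comparing the constructions of the limits in $\P\C$ and $\P\D$. For the $Y$-part of (3), preservation of finite limits is immediate from the pointwise formula for finite limits in $\P\C$ together with the Yoneda lemma: $(\lim_W Y D_j)(Z) = \lim_W \C(Z, D_j) = \C(Z, \lim_W D_j) = Y(\lim_W D_j)(Z)$.

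The left exactness of $M : \P\P\C \to \P\C$ requires a little more care. Given $\Phi \in \P\P\C$ left Kan extended from a small lex $\iota : \L \hookrightarrow \P\C$---such an $\L$ exists because $\P\C$ is finitely complete by (1)---a coend manipulation via the co-Yoneda lemma gives $M(\Phi) = \Phi \star 1_{\P\C} = (\Phi|_\L) \star \iota$. Passing to a common small lex $\L$ capturing all components of a finite diagram in $\P\P\C$, preservation of the diagram's limit by $M$ reduces to showing that the cocontinuous extension $(\thg) \star \iota : [\L^\op, \V] \to \P\C$ of the lex functor $\iota$ preserves finite limits---a standard consequence of $\L$ being small and lex, $\iota$ being lex, $\P\C$ being cocomplete, and filtered colimits commuting with finite limits in $\P\C$ (inherited pointwise from $\V$). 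I expect the main obstacle throughout to be the enriched ``lex equals flat'' theorem underlying the key lemma, together with its cocontinuous-extension analogue invoked for $M$; once these are secured, the remainder of the proof unwinds systematically.
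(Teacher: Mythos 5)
The paper does not actually prove this proposition: it defers entirely to Proposition 4.3 and Remark 6.6 of Day and Lack's \emph{Limits of small functors}, so there is no internal argument to compare against. Your proof is, in essence, the standard one from that reference, and it is correct in outline. The key lemma---that $\Lan_{\iota^\op}$ preserves finite limits when $\iota$ is lex with small finitely complete domain, via the enriched ``lex equals flat'' theorem---is exactly the right pivot, and it is precisely here that the standing hypothesis that $\V$ be locally finitely presentable as a closed category earns its keep. The reduction of (1), (2) and the $Y$-half of (3) to this lemma, using a single small lex witness $\L$ for a finite diagram and the pointwise computation of finite limits, is sound; one presentational caveat is that for large $\C$ the $\V$-category $[\C^\op,\V]$ need not exist, so the coend and pointwise-limit computations should be phrased so that universal properties are only ever tested against small presheaves via the small-end formula for the homs of $\P\C$. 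The one place where your justification is slightly misdirected is the $M$ case: the lexness of $(\thg)\star\iota \colon [\L^\op,\V]\to\P\C$ is not best attributed to ``filtered colimits commuting with finite limits in $\P\C$''; rather, evaluate pointwise at each $X\in\C$, observe that $\mathrm{ev}_X\circ\iota\colon\L\to\V$ is lex (since $\mathrm{ev}_X$ preserves the pointwise finite limits of $\P\C$), and invoke the same lex-equals-flat lemma once more---the commutation of filtered colimits with finite limits in $\V$ is the engine \emph{inside} that lemma, not a separate property of $\P\C$ to be cited. With that adjustment the entire proposition runs on a single application of the flatness theorem, which is also how the cited reference proceeds.
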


As in the Introduction, let us write $\cat{LEX}$ for the locally full
sub-$2$-category of $\cat{CAT}$ comprising the finitely complete categories and
the left exact functors. It follows from Proposition~\ref{prop:lift} that $\P$
restricts and corestricts to a lax-idempotent pseudomonad on $\cat{LEX}$,
which, as in the Introduction, we shall denote by $\plex$. We now wish to
characterise the $\plex$-pseudoalgebras. Since $\plex$ is lax-idempotent, and
all of its unit maps are fully faithful, pseudoalgebra structures on the
finitely complete $\C$ may be identified with left adjoints in $\cat{LEX}$ for
the unit $Y \colon \C \to \plex \C = \P \C$. Such a left adjoint in $\cat{LEX}$
is of course also one in $\cat{CAT}$, and so any $\plex$-pseudoalgebra is
cocomplete. The extra requirement that the left adjoint should be left exact
may be rephrased in a number of ways, by analogy with
Proposition~\ref{prop:charcocomp}.

\begin{Prop}\label{prop:charlexcocomp} For a finitely complete and cocomplete category $\C$, the following are equivalent:
\begin{enumerate}
\item For each lex $D \colon \K \to \C$ with $\K$ small, the functor
    $(\thg) \star D \colon \P \K \to \C$ is also lex;
\item For each lex $D \colon \K \to \C$, the functor $(\thg) \star D \colon
    \P \K \to \C$ is also lex;
\item The functor $(\thg) \star 1_\C \colon \P \C \to \C$ is lex;
\item For each lex $D \colon \K \to \C$ with $\K$ small, the functor
    $\Lan_Y D \colon \P \K \to \C$ is also lex;
\item For each lex $D \colon \K \to \C$, the functor $\Lan_Y D \colon \P \K
    \to \C$ is also lex;
\item The functor $\Lan_Y (1_\C) \colon \P \C \to \C$ is lex;
\item The functor $Y \colon \C \to \P \C$ admits a left exact left adjoint;
\item $\C$ admits a structure of $\plex$-pseudoalgebra;
\item $\C$ is lex-reflective in some $\P \D$ with $\D$ finitely complete.
\end{enumerate}
\end{Prop}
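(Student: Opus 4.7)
The plan is to mirror the proof of Proposition \ref{prop:charcocomp}, adding lexness of the relevant functors at each stage. The equivalences $(1)\Leftrightarrow(4)$, $(2)\Leftrightarrow(5)$ and $(3)\Leftrightarrow(6)$ are immediate from $\Lan_Y D(\phi)\cong\phi\star D$; the implications $(2)\Rightarrow(1)$ and $(2)\Rightarrow(3)$ (the latter by taking $D=1_\C$) are trivial; $(6)\Rightarrow(7)$ follows from the standard fact that $\Lan_Y(1_\C)$, when it exists, is left adjoint to the fully faithful $Y$; $(7)\Leftrightarrow(8)$ is lax-idempotency of $\plex$, as already used in the discussion preceding the proposition; and $(7)\Rightarrow(9)$ is immediate on taking $\D=\C$. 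So the task reduces to supplying $(6)\Rightarrow(2)$ and $(9)\Rightarrow(6)$.

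For $(6)\Rightarrow(2)$: given lex $D\colon\K\to\C$, the functor $\P D\colon\P\K\to\P\C$ is lex by Proposition \ref{prop:lift}(2) and cocontinuous by construction. Writing $L=\Lan_Y(1_\C)\colon\P\C\to\C$, the composite $L\circ\P D$ is cocontinuous and agrees with $D$ on representables, so by the universal property of free cocompletion it coincides with $(\thg)\star D$, which is therefore lex.

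The crucial step, and the one I expect to be the main obstacle, is $(9)\Rightarrow(6)$. Let $L\dashv i\colon\C\to\P\D$ exhibit $\C$ as a lex reflection of some $\P\D$ with $\D$ finitely complete. As a right adjoint, $i$ is lex, so $\P i\colon\P\C\to\P\P\D$ is lex by Proposition \ref{prop:lift}(2); and $M_\D\colon\P\P\D\to\P\D$ is lex by Proposition \ref{prop:lift}(3). Thus the composite
\[
F\colon\ \P\C\xrightarrow{\P i}\P\P\D\xrightarrow{M_\D}\P\D\xrightarrow{L}\C
\]
is lex. Each factor is moreover cocontinuous ($\P i$ and $M_\D$ by their descriptions as cocontinuous extensions of, respectively, $Y_{\P\D}\circ i$ and $1_{\P\D}$; and $L$ as a left adjoint), so $F$ is cocontinuous; and on a representable $Y_\C X\in\P\C$ it evaluates to $L(M_\D(Y_{\P\D}(iX)))=L(iX)\cong X$, the last isomorphism because $i$ is fully faithful. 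By the universal property of free cocompletion, $F\cong\Lan_Y(1_\C)$, which is therefore lex, delivering $(6)$ and closing the loop.
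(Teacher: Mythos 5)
Your two supplied implications are correct, and your route $(9)\Rightarrow(6)\Rightarrow(2)$ is a clean variant of what the paper does: the paper proves $(9)\Rightarrow(1)$ by putting an extra $\P D$ in front of your composite so as to compute $(\thg)\star D$ for small $\K$, whereas you identify the composite directly with $\Lan_Y(1_\C)$; both work. However, there is a genuine gap in the overall structure of your argument: in the implication graph you set up, conditions (1) and (4) are sinks. You prove $(2)\Rightarrow(1)$ and $(1)\Leftrightarrow(4)$, but you never derive anything \emph{from} (1) or (4), so a category satisfying only (1) is not shown to satisfy the remaining conditions. The missing step is $(1)\Rightarrow(2)$, which the paper explicitly singles out as one of the two implications requiring new work, and it is not formal: condition (1) quantifies only over small $\K$, while (2), (3) and (6) concern possibly large domains --- indeed (3) is exactly (1) applied to $1_\C\colon\C\to\C$, and $\C$ need not be small.

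The paper's argument for $(1)\Rightarrow(2)$ is a reduction to the small case. Given lex $D\colon\K\to\C$ with $\K$ large, a finite weight $\psi\colon\M\to\V$ and $H\colon\M\to\P\K$, choose a small full subcategory $J\colon\L\hookrightarrow\K$, closed under finite limits, witnessing the smallness of every $HX$; setting $\bar H X=(HX)J^\op$ one has $H\cong\Lan_{J^\op}\circ\bar H\cong\P J\circ\bar H$, and since $\P J$ is lex (Proposition~\ref{prop:lift}) and cocontinuous one computes
\begin{equation*}
\{\psi,H\}\star D\cong\{\psi,\bar H\}\star DJ\cong\{\psi ?,\bar H ?\star DJ\}\cong\{\psi ?,H ?\star D\}\rlap{ ,}
\end{equation*}
the middle isomorphism being (1) applied to the lex $DJ$ with small domain $\L$. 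You need this argument (or some equivalent reduction) to close the cycle through (1) and (4); everything else in your proposal is sound.
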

In part (9), a category $\A$ is said to be \emph{lex-reflective} in a category
$\B$ if there is a fully faithful functor $\A \to \B$ which admits a left exact
left adjoint; we may sometimes also say that $\A$ is a \emph{localisation} of
$\B$.
\begin{proof}
The only implications not exactly as before are (1) $\Rightarrow$ (2) and (9)
$\Rightarrow$ (1). For the former, let $D \colon \K \to \C$ be lex; assuming
(1), we must show that $(\thg) \star D \colon \P \K \to \C$ preserves finite
limits. So given $\psi \colon \M \to \V$ a finite weight and $H \colon \M \to
\P \K$, we are to show that $\{\psi, H\} \star D \cong \{\psi ?, H? \star D\}$.
Choose some $J \colon \L \hookrightarrow \K$ which witnesses the smallness of
$HX \in \P\K$ for every $X \in \M$; without loss of generality, we may assume
that $\L$ is closed under finite limits in $\K$, so that $\L$ is finitely
complete and $J$ lex. Write $\bar H \colon \M \to \P \L$ for the functor
sending $X$ to $(HX).J^\op \colon \L^\op \to \V$. Then $H \cong
\Lan_{J^\op}.\bar H$, and $\Lan_{J^\op} \cong \P J$ preserves finite limits by
Proposition~\ref{prop:lift}, whence
\begin{align*}
    \{\psi, H\} \star D &\cong
    \{\psi, \Lan_{J^\op}.\bar H\} \star D \cong
    (\Lan_{J^\op}\{\psi, \bar H\}) \star D \cong
    \{\psi, \bar H\} \star DJ \\ &\cong
    \{\psi?, \bar H ? \star DJ \}\cong
    \{\psi?, \Lan_{J^\op}(\bar H ?) \star D \} \cong
    \{\psi?,  H ? \star D \}
\end{align*}
where in passing from the first to the second line we use (1) applied to the
lex $DJ$ with small domain. This proves that (1) $\Rightarrow$ (2); it remains
only to show that (9) $\Rightarrow$ (1). Let $\D$ be finitely complete, and let
$L \dashv J \colon \C \to \P \D$ exhibit $\C$ as a localisation of $\P \D$. Now
given $\K$ small and $D \colon \K \to \C$ lex, the functor $(\thg) \star D$ may
be calculated to within isomorphism as the composite
\begin{equation*}
    \P \K \xrightarrow{\P D} \P \C \xrightarrow{\P J} \P \P \D \xrightarrow{M} \P \D \xrightarrow{L} \C
\end{equation*}
each of whose constituent parts is lex either by Proposition~\ref{prop:lift} or
by assumption; whence the composite is lex as required.
\end{proof}
We shall call a category satisfying any of the equivalent conditions of this
proposition \emph{small-exact}. In the case where $\V = \cat{Set}$ we can give
a concrete characterisation of the small-exact categories. Recall that an
\emph{$\infty$-pretopos} is a finitely complete and small-cocomplete
$\cat{Set}$-category in which colimits are stable under pullback, coproduct
injections are disjoint, and every equivalence relation is effective.
\begin{Prop}
A finitely complete and small-cocomplete $\cat{Set}$-category is small-exact if
and only if it is an $\infty$-pretopos.
\end{Prop}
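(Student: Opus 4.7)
The plan is to prove each implication by reducing to one of the equivalent characterisations of small-exactness collected in Proposition~\ref{prop:charlexcocomp}; the forward direction rests on~(9) and the converse on~(3).

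For $(\Rightarrow)$, suppose $\C$ is small-exact, so that there is a finitely complete $\D$ together with a fully faithful $J \colon \C \to \P \D$ admitting a lex left adjoint $L$. The free cocompletion $\P\D$ inherits all three $\infty$-pretopos axioms pointwise from $\cat{Set}$: small colimits are stable under pullback, coproducts are disjoint, and equivalence relations are effective. I would then transfer each of these to $\C$ by the standard reflective-subcategory device: any small colimit in $\C$ is $L$ applied to the corresponding colimit in $\P\D$, whereas any finite limit in $\C$ is computed by $J$ and already lies in the reflective subcategory. Since $L$ is both cocontinuous and left exact, each of the three axioms descends.

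For $(\Leftarrow)$, assume $\C$ is an $\infty$-pretopos. I would verify part~(3) of Proposition~\ref{prop:charlexcocomp}: that the cocontinuous functor $L \defeq (\thg) \star 1_\C \colon \P \C \to \C$ is left exact. Since finite limits are generated by pullbacks and the terminal object, it suffices to verify $L$ preserves these. The terminal of $\P\C$ is the representable $\C(\thg, 1)$ (since $\C$ has a terminal object), and $L$ sends this to $1_\C$. For a cospan $\phi \to \chi \leftarrow \psi$ in $\P \C$, each presheaf can be written as a small colimit of representables via its category of elements; the pointwise pullback in $\P\C$ can then be expressed as a colimit of pullbacks of representables, and applying $L$ together with its cocontinuity reduces the question to pullback-stability of colimits in $\C$, which holds by hypothesis.

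The main obstacle lies in this last manipulation: expressing the pointwise pullback of a cospan of small presheaves as a colimit of pullbacks of representables in a form amenable to $L$. The cleanest route is to present each small presheaf $\phi$ as the coequaliser of a pair of maps between small coproducts of representables (a standard bar-type resolution), reducing the problem to the behaviour of pullback with respect to finite coproducts and to coequalisers. Disjointness of coproducts lets pullback distribute over coproducts, while effectiveness of equivalence relations together with pullback-stability lets pullback commute past the relevant coequalisers. The combined force of the three $\infty$-pretopos axioms is then precisely the Giraud-style statement that the colimit functor $\P\C \to \C$ is left exact; although the ingredients are classical, marshalling them correctly is the substantive part of the argument.
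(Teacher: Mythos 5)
Your forward direction is essentially the paper's own argument: invoke clause (9) of Proposition~\ref{prop:charlexcocomp}, note that $\P\D$ is an $\infty$-pretopos because its finite limits and small colimits are pointwise, and transfer the three axioms across the lex-reflective inclusion $L \dashv J$. Nothing to add there.

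The converse is where you part company with the paper, which does not prove this direction at all: it verifies clause (4) of Proposition~\ref{prop:charlexcocomp} by citing Corollary~3.3 of Kock's \emph{Postulated colimits} paper. You instead sketch the underlying Giraud-style verification of clause (3), and while the outline is the right one, it has a concrete gap at the coequaliser stage. The bar-type presentation exhibits a small presheaf $\phi$ as the coequaliser of a \emph{reflexive pair} $\coprod_i YA_i \rightrightarrows \coprod_j YB_j$, not of an equivalence relation, so effectivity of equivalence relations does not apply directly to the image of this pair under $(\thg)\star 1_\C$. To identify the kernel-pair of the resulting quotient map in $\C$ --- which is what you need in order to compute a pullback over $\phi \star 1_\C$ --- you must first generate the equivalence relation from the reflexive pair (take its image, symmetrise, and form the transitive closure as the union of the chain $R \subseteq RR^oR \subseteq \cdots$, as in Section~\ref{subsec:reflexive} of the paper), check that this construction is stable under pullback, and only then invoke effectivity. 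Relatedly, ``pullback-stability of colimits'' by itself only says that pulling a colimit cocone back along a fixed map yields a colimit cocone; decomposing a pullback \emph{over} a colimit --- which is what arises when the vertex $\chi$ of your cospan is itself presented as a colimit --- needs the descent-type consequences of disjointness and effectivity, not bare stability (coproducts in $\cat{Cat}$ are stable but not disjoint, and pullbacks over them do not decompose). None of this is wrong in spirit --- it is precisely the content of the result Kock's corollary packages --- but it is the substantive part of the converse, and your proposal gestures at it rather than carrying it out. Either execute these steps or, as the paper does, cite the result.
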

\begin{proof}
$\cat{Set}$ is certainly an $\infty$-pretopos; whence also any
$\cat{Set}$-category $\P \D$ where $\D$ is lex, since finite limits and small
colimits in $\P \D$ are computed pointwise. It is moreover easy to show that
the exactness properties of an $\infty$-pretopos will be inherited by any
localisation of it; and so every small-exact $\cat{Set}$-category is an
$\infty$-pretopos by clause (9) of Proposition~\ref{prop:charlexcocomp}. For
the converse, we observe that
 any $\infty$-pretopos satisfies clause (4) of Proposition~\ref{prop:charlexcocomp}---see,
 for instance,~\cite[Corollary 3.3]{Kock1989Postulated}---and so is small-exact.
\end{proof}
Returning to the case of a general $\V$, let us define a \emph{$\V$-topos} to
be any localisation of a presheaf category $[\C^\op, \V]$ on a small $\C$. The
following result can be seen as a ``Giraud theorem''; when $\V = \cat{Set}$, it
recaptures \cite{Giraud1964Analysis}'s characterisation of the Grothendieck
toposes as the $\infty$-pretoposes with a small generating family (bearing in
mind that in an $\infty$-pretopos, the full subcategory spanned by any
generating family is dense).
\begin{Prop}\label{prop:vtopsmallexact}
The finitely complete $\E$ is a $\V$-topos if and only if it is small-exact and
has a small, dense subcategory.
\end{Prop}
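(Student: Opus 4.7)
For the direction ($\Leftarrow$), I plan to enlarge the given dense subcategory into a small, \emph{lex} dense subcategory and then feed it into clause (4) of Proposition~\ref{prop:charlexcocomp}. Concretely, let $J_0 \colon \C_0 \hookrightarrow \E$ be a small dense subcategory and let $\K \hookrightarrow \E$ be the closure of $\C_0$ under finite limits inside $\E$. The hypothesis that $\V$ is locally finitely presentable as a closed category, together with the smallness of $\C_0$, should ensure that $\K$ is essentially small; and since $\C_0 \subseteq \K$, the subcategory $\K$ remains dense in $\E$. The inclusion $J \colon \K \to \E$ is by construction lex with small domain, so by Proposition~\ref{prop:charlexcocomp}(4) the left Kan extension $\Lan_Y J \colon \P\K \to \E$ is lex. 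This $\Lan_Y J$ is left adjoint to the nerve $N = \E(J{-},-) \colon \E \to \P\K$, which is fully faithful because $\K$ is dense. Thus $\E$ is lex-reflective in $\P\K = [\K^\op,\V]$, i.e.\ a $\V$-topos.

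For the direction ($\Rightarrow$), suppose $L \dashv J \colon \E \to [\C^\op,\V]$ exhibits $\E$ as a $\V$-topos, with $J$ fully faithful and $L$ lex. Consider the composite $F \defeq LY \colon \C \to \E$; its nerve $\E(F{-},-) \colon \E \to [\C^\op,\V]$ is, by adjunction, naturally isomorphic to $J$ itself, which is fully faithful, so $F$ is dense and its (essential) image is a small dense subcategory of $\E$. For small-exactness I plan to display $\E$ as lex-reflective in a presheaf category on a small \emph{lex} category, and invoke Proposition~\ref{prop:charlexcocomp}(9). Let $\tilde\C$ be the free finite-limit completion of $\C$; it is small and lex by our standing hypothesis on $\V$, and the unit $i \colon \C \to \tilde\C$ is fully faithful. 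Then restriction $i^* \colon [\tilde\C^\op,\V] \to [\C^\op,\V]$ is cocontinuous and hence lex, and has a fully faithful right adjoint $\Ran_{i^\op}$ (since $i$ is fully faithful); so $[\C^\op,\V]$ is lex-reflective in $\P\tilde\C$. Composing this with the given lex-reflection of $\E$ in $[\C^\op,\V]$ presents $\E$ as lex-reflective in $\P\tilde\C$ with $\tilde\C$ finitely complete, whence $\E$ is small-exact by Proposition~\ref{prop:charlexcocomp}(9).

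The two implications thus rest on standard adjunction arithmetic together with the existence of the small lex completion $\tilde\C$ (on the $\Rightarrow$ side) and the closure $\K$ of a small subcategory under finite limits (on the $\Leftarrow$ side). The one point that needs some care is this essential smallness of $\tilde\C$ and of $\K$ in the enriched setting: both reduce to the fact that finite weights involve only finitely many isomorphism classes of objects and finitely presentable $\V$-valued hom-data, so that only a small set of isomorphism classes of finite limits can arise from a small set of diagrams. Once that technical point is granted, all other ingredients are immediate: the nerve of a dense subcategory is fully faithful, and restriction along any functor is cocontinuous and lex.
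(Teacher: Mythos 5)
Your proof is correct and follows essentially the same route as the paper's. The backward direction is identical: close the dense subcategory under finite limits (small because there is only a small set of finite weights up to isomorphism), apply Proposition~\ref{prop:charlexcocomp}(4), and note that the nerve is fully faithful by density. In the forward direction the only difference is cosmetic: where you use the free finite-limit completion $\tilde\C$, the paper takes the closure $\D$ of the representables in $[\C^\op,\V]$ under finite limits; either choice supplies the needed small, finitely complete category receiving $\C$ fully faithfully, after which both arguments present $\E$ as lex-reflective in the larger presheaf category via the fully faithful $\Ran$ and invoke clause~(9). One small slip to fix: the restriction $i^\ast$ is not lex \emph{because} it is cocontinuous---cocontinuity does not imply left exactness. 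It is lex because it preserves all limits, having the left adjoint $\Lan_{i^\op}$ (equivalently, because limits of presheaves are computed pointwise); the conclusion is unaffected.
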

\begin{proof}
Suppose first that $\E$ is a localisation of $[\C^\op, \V]$ for some small
$\C$. Certainly $\E$ has a small dense subcategory, given by the full image of
the representables under the reflector $[\C^\op, \V] \to \E$; we must show that
$\E$ is also small-exact. So let $\D$ denote the closure of the representables
in $[\C^\op, \V]$ under finite limits, and let $J \colon \C \to \D$ be the
restricted Yoneda embedding. It is easy to see that $\D$ is again small; and
now the composite of the fully faithful $\Ran_J \colon [\C^\op, \V] \to
[\D^\op, \V]$ with $\E \to [\C^\op, \V]$ manifests $\E$ as lex-reflective in
$[\D^\op, \V]$. Since $\D$ is finitely complete, $\E$ is small-exact by
Proposition~\ref{prop:charlexcocomp}(9).

Conversely, suppose $\E$ is small-exact, with a small dense subcategory $J
\colon \C \to \E$. Upon replacing $\C$ by its finite-limit closure in
$\E$---which will again be small and dense---we may assume that $\C$ is
finitely complete, and $J$ left exact, so that by
Proposition~\ref{prop:charlexcocomp}(4), $\Lan_Y J \colon [\C^\op, \V] \to \E$
is also left exact. But this functor has as right adjoint the singular functor
$\tilde J  \colon \E \to [\C^\op, \V]$, which is fully faithful as $\C$ is
dense; whence $\E$ is lex-reflective in $[\C^\op, \V]$, and so a $\V$-topos as
required.
\end{proof}

\section{$\Phi$-exactness}\label{sec:phiexactness}
In the previous section we considered pseudoalgebras for the pseudomonad
$\plex$ on $\cat{LEX}$. In this section, we consider pseudoalgebras for
suitable full submonads of $\plex$; these will be the $\Phi$-exact categories
which are the primary concern of this paper. Let us begin by defining what we
mean by a full submonad of $\plex$. Suppose that we are given, for each
finitely complete $\C$, a subcategory $\Q\C \subseteq \P \C$ which is full,
replete and closed under finite limits, with these choices being such that:
\begin{enumerate}
\item For all finitely complete $\C$, the map $Y \colon \C \to \P\C$
    factors through $\Q\C$ (i.e., $\Q \C$ contains the representables);
\item For all lex $F \colon \C \to \D$, the functor $\P F \colon \P\C
    \to \P \D$ maps $\Q \C$ into $\Q \D$;
    \item For all finitely complete $\C$, the functor $M \colon \P\P\C \to \P\C$ maps $\Q \Q \C$ into $\Q \C$.
\end{enumerate}
Under these circumstances, it is easy to see that $\Q \C$ is the value at $\C$
of a lax-idempotent pseudomonad $\Q$ on $\cat{LEX}$, and that the inclusions
$ \Q \C \hookrightarrow \P \C$ constitute a pseudomonad morphism $J \colon \Q \to \plex$. We
shall then call $\Q$ a \emph{full submonad} of $\plex$.

In order to generate full submonads of $\plex$, we define, as in the
Introduction, a \emph{class of weights for lex colimits}---more briefly, a
\emph{class of lex-weights}---to be given by a collection $\Phi$ of functors
$\phi \colon \K^\op \to \V$, with each $\K$ small and finitely complete. Before
continuing, let us stress that by a lex-weight, we do not mean a weight that is
lex; there is no requirement that the $\phi$'s should preserve finite limits,
only that they should be presheaves on finitely complete categories. Let us
also introduce some notation: given a class of lex-weights $\Phi$, we write
$\Phi[\K]$ for the full subcategory of $[\K^\op, \V]$ spanned by the functors
in $\Phi$ with domain $\K^\op$.

From each class of lex-weights $\Phi$ we may generate a full submonad of
$\plex$: namely, the smallest full submonad $\Q$ such that $\Phi[\K] \subseteq
\Q \K$ for each small, finitely complete $\K$. We denote this submonad by $\submonad{\Phi}$, and for
each finitely complete $\C$, write
\begin{equation*}
    \C \xrightarrow{W} \app \Phi \C \xrightarrow{J} \P \C
\end{equation*}
for the corresponding factorisation of $Y \colon \C \to \P \C$. From this
submonad $\submonad{\Phi}$ we obtain in turn a new class of lex-weights
$\Phi^\ast$, comprising all those $\phi \colon \K^\op \to \V$ which lie in 
$\app \Phi \K$ for some small, finitely complete $\K$.
We call this class $\sat \Phi$ the \emph{saturation} of $\Phi$,
and say that $\Phi$ is \emph{saturated} if $\Phi = \sat \Phi$. This name is
justified by the (easy) observation that saturation is a closure operator on
classes of lex-weights. Let us warn the reader that the saturation of a class
of lex-weights as we have just defined it is \emph{not} the same as  its
saturation in the sense of~\cite{Albert1988The-closure,Kelly2005Notes}; ours is a
saturation ``in the lex world'', which, as the following result shows, involves
closure under finite limits as well as under the specified colimits.


\begin{Prop}\label{prop:phicchar}
For each finitely complete $\C$, the category $\app \Phi \C$ is the closure of
the representables in $\P \C$ under finite limits and $\Phi$-lex-colimits.
\end{Prop}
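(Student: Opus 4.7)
The plan is to prove two inclusions, writing $\overline{\C}$ for the closure in $\P\C$ of the representables under finite limits and $\Phi$-lex-colimits.

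For $\overline{\C} \subseteq \app\Phi\C$, it suffices to check that $\app\Phi\C$ contains the representables and is closed under finite limits and $\Phi$-lex-colimits. The first is automatic from the factorisation $\C \xrightarrow{W} \app\Phi\C \xrightarrow{J} \P\C$, and the second is part of the definition of a full submonad. For the third, given $\phi \in \Phi[\K]$ ($\K$ small, finitely complete) and lex $D \colon \K \to \app\Phi\C$, note that $\app\Phi\C = \submonad\Phi\C$ is the free $\submonad\Phi$-pseudoalgebra on $\C$, whose universal property yields a lex algebra morphism $\tilde D \colon \app\Phi\K \to \app\Phi\C$ extending $D$ along $W_\K$. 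Via the monad morphism $\submonad\Phi \to \plex$, $\P\C$ is also a $\submonad\Phi$-algebra, so the lex $JD$ similarly extends uniquely to a lex algebra morphism $\widetilde{JD} \colon \app\Phi\K \to \P\C$; uniqueness forces $\widetilde{JD} \cong J\tilde D$. On the other hand, Proposition~\ref{prop:charlexcocomp}(4) applied to the $\plex$-algebra $\P\C$ produces a cocontinuous lex extension $\overline{JD} \colon \P\K \to \P\C$ of $JD$ along $Y_\K$; its restriction to $\app\Phi\K$ is another lex algebra extension of $JD$, so again by uniqueness $\overline{JD}|_{\app\Phi\K} \cong J\tilde D$. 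Evaluating at $\phi$: $\phi \star (JD) = \overline{JD}(\phi) = J(\tilde D(\phi)) \in \app\Phi\C$.

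For $\app\Phi\C \subseteq \overline{\C}$, the strategy is to show that $\C \mapsto \overline{\C}$ defines a full submonad $\Q'$ of $\plex$ with $\Phi[\K] \subseteq \overline{\K}$ for each small, finitely complete $\K$, and then invoke the minimality of $\submonad\Phi$ to obtain $\submonad\Phi \subseteq \Q'$. Three of the four axioms are easy: closure under finite limits and containment of representables are built in; for $\phi \in \Phi[\K]$, the coend identity $\phi \cong \phi \star Y_\K$ together with the lexness of $Y_\K$ (Proposition~\ref{prop:lift}(3)) exhibits $\phi$ as a $\Phi$-lex-colimit of a lex diagram of representables; and for lex $F$, the functor $\P F$ is lex (Proposition~\ref{prop:lift}(2)) and cocontinuous and preserves representables, so an induction on the construction of $\overline{\C}$ yields $\P F(\overline{\C}) \subseteq \overline{\D}$. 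The multiplication axiom is the most substantive: letting $\iota \colon \overline{\C} \hookrightarrow \P\C$ denote the inclusion, both $\P\iota$ and $M$ are lex and cocontinuous, and the composite $M \circ \P\iota \colon \P(\overline{\C}) \to \P\C$ sends the representable at $\psi \in \overline{\C}$ to $\psi$ itself (using the fully faithfulness of $\iota$ to identify $\P\iota$ on representables, and the triangle identity $M \circ Y_{\P\C} = 1$). A further induction on the construction of $\overline{(\overline{\C})}$ then shows that $M \circ \P\iota$ carries $\overline{(\overline{\C})}$ into $\overline{\C}$, establishing the multiplication axiom.

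The main obstacle is the uniqueness-of-extension step in the first inclusion: it rests on the lax-idempotent nature of $\submonad\Phi$, which guarantees that any lex functor between $\submonad\Phi$-algebras admits at most one algebra-morphism structure up to isomorphism, forcing the two apparent lex algebra extensions of $JD$ to coincide. The remaining inductive closure arguments are routine once one records that lex cocontinuous functors preserve both finite limits and $\Phi$-lex-colimits of lex diagrams.
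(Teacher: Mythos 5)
Your proof is correct, and for the inclusion $\app \Phi \C \subseteq \overline{\C}$ it is essentially the paper's argument: both show that the closure $\overline{({-})}$ is itself a full submonad of $\plex$ containing $\Phi$ (with the same inductive verifications, including $\phi \cong \phi \star Y$ to get $\Phi[\K] \subseteq \overline{\K}$) and then appeal to the minimality defining $\submonad{\Phi}$. Where you genuinely diverge is the reverse inclusion. The paper first observes that conditions (2) and (3) in the definition of a full submonad combine into the single statement that, for every lex $F \colon \C \to \Q\D$, the functor $\phi \mapsto \phi \star JF$ carries $\Q\C$ into $\Q\D$ --- that is, $\Q\D$ is closed in $\P\D$ under $\Q$-lex-colimits --- so that closure of $\app \Phi \C$ under $\Phi$-lex-colimits falls out immediately from $\Phi[\K] \subseteq \app \Phi \K$. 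You reach the same conclusion by playing the universal property of the free $\submonad{\Phi}$-pseudoalgebra $\app \Phi \K$ against three candidate extensions of $JD$ and invoking the essential uniqueness of pseudoalgebra-morphism structures for a lax-idempotent pseudomonad. This is sound (the inclusions $J$ are indeed algebra morphisms, being components of the pseudomonad morphism $\submonad{\Phi} \to \plex$, and $\P\C$, being small-exact, supports the restricted algebra structure you use), but it imports the biadjunction between $\cat{LEX}$ and the $2$-category of $\submonad{\Phi}$-pseudoalgebras, machinery the paper only deploys after this proposition. The paper's reformulation of axioms (2) and (3) gets the closure property with no pseudoalgebra theory at all, and that same reformulation is then reused in the minimality step, so the two halves of its proof share their engine; in your version the first inclusion stands apart from the rest of the argument and is the heavier of the two routes.
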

By this, we mean that $\app \Phi \C$ is the smallest full, replete subcategory
of $\P\C$ which contains the representables and is closed under finite limits
and $\Phi$-lex-colimits; as in the introduction, a \emph{$\Phi$-lex-colimit} in a finitely complete category $\C$ is a weighted colimit of the form $\phi \star D$ for some $\phi \in \Phi[\K]$ and some lex functor $D \colon \K \to \C$.
\begin{proof}
Before beginning the proof proper, let us observe that we may combine conditions (2) and (3) for a full submonad into the single condition that, for every lex $F \colon \C \to \Q \D$, the composite
\begin{equation}
\label{eq:altform}
 \P \C \xrightarrow{\P F} \P \Q \D \xrightarrow{\P J} \P \P \D \xrightarrow{M} \P \D
\end{equation}
should map $\Q \C$ into $\Q \D$. As the displayed composite sends $\phi \in \P \C$ to the colimit $\phi \star JF$ in $\P \D$, this is equally well to ask that, for every $\phi \in \Q \C$ and lex $F \colon \C \to \Q \D$, the colimit $\phi \star JF$ in $\P \D$ should lie in $\Q \D$; and we may express this by saying that $\Q \D$ is closed in $\P \D$ under $\Q$-lex-colimits.

Now for each finitely complete $\C$, let $\Q \C$ denote the closure of the representables in $\P \C$ under finite limits and $\Phi$-lex-colimits. We first show that the $\Q\C$'s constitute a full submonad $\Q$ of $\plex$. Clearly each $\Q\C$ contains the representables, and is full, replete and finite-limit-closed in $\P \C$, and so by the above discussion, it is enough to show that for each lex $F \colon \C \to \Q \D$, the composite~\eqref{eq:altform} maps $\Q \C$ into $\Q \D$. But for each $F$, the collection of $\phi \in \P \C$ for which~\eqref{eq:altform} lands in $\Q \D$ is easily seen to contain the representables; it is moreover closed under finite limits and $\Phi$-lex-colimits, since~\eqref{eq:altform} preserves them, and $\Q \D$ is closed in $\P \D$ under them, and so must encompass all of $\Q \C$, as desired. 

So we have a full submonad $\Q \subseteq \plex$; moreover, for each small finitely complete $\K$, we have $\Phi[\K] \subseteq \Q \K$, as any $\phi \in \Phi[\K]$ can be expressed as $\phi \star Y$---with $Y \colon \K \to [\K^\op, \V]$ the (lex) Yoneda embedding---and so as a $\Phi$-lex-colimit of representables. Thus $\Q$ is a full submonad of $\plex$ containing $\Phi$, and we now claim that it is the smallest such; in other words, that  $\Q = \submonad \Phi$ as desired.  Thus given any full submonad $\Q' \subseteq \plex$, we must show that $\Q \subseteq \Q'$; for which it will suffice to show that each $\Q' \C$  contains the representables, and is full, replete and closed under finite limits and $\Phi$-lex-colimits in $\P \C$. The only non-trivial clause is closure under $\Phi$-lex-colimits. Since $\Q'$ is a full submonad, we know from the above discussion that each $\Q' \C$ is closed in $\P \C$ under $\Q'$-lex-colimits; and as $\Phi[\K] \subseteq \Q' \K$ for each small, finitely complete $\K$, each $\Q' \C$ is thereby closed in $\P \C$ under $\Phi$-lex-colimits, as required.
\end{proof}
We shall also make use of the following result, which says that the pseudomonad
generated by a class of lex-weights is ``small-accessible'':
\begin{Prop}\label{prop:keylemma}
If $\Phi$ is a class of lex-weights, and $\C$ a finitely complete category,
then every $\phi \in \app \Phi \C$ is of the form $\Lan_{J^\op}(\psi)$ for some
lex $J \colon \L \to \C$ with small domain and some $\psi
\in \app \Phi \L$. In fact, we may always take $J$ to be the inclusion of a
full, replete subcategory, and $\psi$ to be the composite $\phi J$.
\end{Prop}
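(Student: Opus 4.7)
The plan is to proceed by induction on the description of $\app \Phi \C$ given in Proposition~\ref{prop:phicchar}. Write $\D \subseteq \app \Phi \C$ for the collection of $\phi$ admitting a small, full, replete subcategory $\L \subseteq \C$ closed under finite limits in $\C$, such that, with $J \colon \L \hookrightarrow \C$ the inclusion, the restriction $\phi J$ lies in $\app \Phi \L$ and the counit $\Lan_{J^\op}(\phi J) \to \phi$ is invertible. I will show that $\D$ contains the representables and is closed in $\P \C$ under finite limits and $\Phi$-lex-colimits; Proposition~\ref{prop:phicchar} then forces $\D = \app \Phi \C$, which is precisely the statement of the proposition.

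Two ingredients are used throughout. First, any such inclusion $J$ is lex, so by Proposition~\ref{prop:lift}(2), $\P J = \Lan_{J^\op} \colon \P \L \to \P \C$ is lex; it is moreover fully faithful and left adjoint to the restriction functor $J^*$, and satisfies $J^* \Lan_{J^\op} \cong \id$. Second, the submonad property of $\submonad \Phi$ ensures that for any lex inclusion $K \colon \L_1 \hookrightarrow \L_2$ between small, full, lex subcategories of $\C$, the functor $\P K$ takes $\app \Phi {\L_1}$ into $\app \Phi {\L_2}$. The base case is immediate: given $X \in \C$, take $\L$ to be the replete finite-limit closure of $\{X\}$ in $\C$ (small because $\V$ is locally finitely presentable), and observe that $\C(\thg, X) \circ J = \L(\thg, X)$ is a representable in $\app \Phi \L$, while $\Lan_{J^\op}(\L(\thg, X)) \cong \C(\thg, X)$ by the coYoneda lemma.

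The key tactic for the inductive steps is to amalgamate supports. Given finitely many $\phi_i \in \D$ with supports $\L_i$, or a lex diagram $H \colon \M \to \D$ with $\M$ small and $H(m)$ supported on $\L_m$, take $\L$ to be the replete finite-limit closure of $\bigcup_i \L_i$ or $\bigcup_{m \in \ob \M} \L_m$; this remains small. Writing $K_i \colon \L_i \hookrightarrow \L$, the second ingredient above gives $\phi_i J \cong \P K_i (\phi_i J_i) \in \app \Phi \L$; and $\Lan_{J^\op}(\phi_i J) \cong \phi_i$ by iterated Kan extension together with $J^* \Lan_{J^\op} \cong \id$. So each $\phi_i$ still belongs to $\D$ with support $\L$. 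With supports amalgamated, a finite limit $\phi = \lim \phi_i$ satisfies $\phi J = \lim(\phi_i J) \in \app \Phi \L$ (since $J^*$ preserves limits and $\app \Phi \L$ is finite-limit-closed) and $\phi \cong \Lan_{J^\op}(\phi J)$ (since $\Lan_{J^\op}$ is lex). For a $\Phi$-lex-colimit $\alpha \star H$ with $\alpha \in \Phi[\M]$, the diagram $H$ factors after amalgamation as $\Lan_{J^\op} \circ (J^* H)$, so by cocontinuity of $\Lan_{J^\op}$ we obtain $\alpha \star H \cong \Lan_{J^\op}(\alpha \star J^* H)$, where $\alpha \star J^* H$ is a $\Phi$-lex-colimit of a lex diagram in $\app \Phi \L$, and so lies in $\app \Phi \L$.

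The main obstacle is the bookkeeping of the enlargement step---tracking the iterated Kan extensions $\Lan_{J^\op} \circ \Lan_{K_i^\op} = \Lan_{J_i^\op}$ and verifying that each constituent functor (lex, fully faithful, or left adjoint) plays its expected role. These are essentially formal verifications, but enough of them accumulate that care is required; everything ultimately reduces to Proposition~\ref{prop:lift}(2) and the defining properties of $\submonad \Phi$ as a full submonad of $\plex$.
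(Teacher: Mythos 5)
Your proposal is correct and follows essentially the same route as the paper's proof: both define the subcategory of presheaves satisfying the strong form of the condition, verify it contains the representables, and show closure under finite limits and $\Phi$-lex-colimits by amalgamating the witnessing subcategories into the finite-limit closure of their union, before invoking Proposition~\ref{prop:phicchar}. The only difference is that you spell out more of the formal bookkeeping (the iterated Kan extensions and the use of the submonad property) that the paper leaves as ``whence easily''.
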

\begin{proof}
Let $\E$ denote the subcategory of $\app \Phi \C$ spanned by those $\phi$
satisfying the stronger form of the stated condition. Clearly the
representables lie in $\E$. Now suppose that $\psi \colon \K \to \V$ is a
finite weight and $D \colon \K \to \app \Phi \C$ is such that every $DX$ lies
in $\E$; we shall show that $\{\psi, D\}$ does too. If the subcategory $J_X
\colon \L_X \hookrightarrow \C$ witnesses the condition for $DX$, then taking
$J \colon \L \hookrightarrow \C$ to be the lex closure of the union of these
subcategories, we have $\{\psi, D\} \cong \{\psi, \Lan_{J^\op}(D(\thg)J)\}
\cong \Lan_{J^\op} \{\psi, D(\thg)J\}$. By assumption, each $DX.J_X \in \app
\Phi {\L_X}$, whence easily $DX.J \in \app \Phi \L$ and so $\{\psi, D(\thg)J\}
\in \app \Phi \L$, as $\app \Phi \L$ is closed under finite limits in $\P \L$.
Thus $\{\psi, D\} J \cong \{\psi, D(\thg)J\}$ is in $\app \Phi \L$ and
$\Lan_{J^\op}(\{\psi, D\}J) \cong \{\psi, D\}$ as claimed. An entirely similar
argument shows $\E$ is closed under $\Phi$-lex-colimits in $\app \Phi \C$; and
so by the preceding proposition, we have $\E = \app \Phi \C$.
\end{proof}

Given a class of lex-weights $\Phi$, we now give a characterisation of the
$\submonad \Phi$-pseudoalgebras. Since $\submonad \Phi$ is lax-idempotent and
all of its unit maps are fully faithful, the finitely complete $\C$ will admit
$\submonad \Phi$-pseudoalgebra structure just when the unit map $W \colon \C
\to \app \Phi \C$ admits a left adjoint in $\cat{LEX}$. The following two
propositions characterise, firstly, those $\C$ for which a left adjoint to $W$
exists in $\cat{CAT}$, and secondly, those for which such a left adjoint is
left exact. Given a class of lex-weights $\Phi$, we say as in the Introduction
that $\C$ is \emph{$\Phi$-lex-cocomplete} if it is finitely complete, and for
every $\phi \in \Phi[\K]$ and every lex $D \colon \K \to \C$, the colimit $\phi
\star D$ exists in $\C$.
\begin{Prop}\label{prop:charphicocomp}Let $\Phi$ be a class of lex-weights, and
let $\C$ be finitely complete. Then the following are equivalent:
\begin{enumerate}
\item $\C$ is $\sat \Phi$-lex-cocomplete;
\item For each lex $D \colon \K \to \C$ and $\phi \in \app \Phi \K$, the
    colimit $\phi \star D$ exists in $\C$;
\item For every $\phi \in \app \Phi \C$, the colimit $\phi \star 1_\C$
    exists in $\C$;
\item For each lex $D \colon \K \to \C$ with $\K$ small, the Kan extension
    $\Lan_W D \colon \app \Phi \K \to \C$ exists;
\item For each lex $D \colon \K \to \C$, the Kan extension $\Lan_W D \colon
    \app \Phi \K \to \C$ exists;
\item The Kan extension $\Lan_W (1_\C) \colon \app \Phi \C \to \C$ exists;
\item The functor $W \colon \C \to \app \Phi \C$ admits a left adjoint;
\item $\C$ is reflective in some $\app \Phi \D$.
\end{enumerate}
\end{Prop}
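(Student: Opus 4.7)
The proof plan is to mirror the pattern of Proposition~\ref{prop:charcocomp} (the corresponding result for the free cocompletion pseudomonad $\P$), using Propositions~\ref{prop:phicchar} and~\ref{prop:keylemma} as the two new technical inputs. The starting observation is that $W \colon \C \to \app \Phi \C$ is fully faithful and that the pointwise left Kan extension satisfies $\Lan_W D(\phi) \cong \phi \star D$; this delivers immediately the three equivalences (1)~$\Leftrightarrow$~(4), (2)~$\Leftrightarrow$~(5) and (3)~$\Leftrightarrow$~(6), reducing the problem to cycling among the colimit-formulations (1), (2), (3) together with the adjunction conditions (7) and (8).

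In that cycle, (2)~$\Rightarrow$~(3) is trivial on taking $\K = \C$ and $D = 1_\C$. For (1)~$\Rightarrow$~(2), I invoke Proposition~\ref{prop:keylemma}: given lex $D \colon \K \to \C$ and $\phi \in \app \Phi \K$, we may write $\phi \cong \Lan_{J^\op}(\psi)$ for some full, replete inclusion $J \colon \L \hookrightarrow \K$ with $\L$ small and finitely complete, and $\psi \in \app \Phi \L$. Then the standard colimit-pasting isomorphism gives $\phi \star D \cong \psi \star DJ$, and the right-hand side exists by (1) since $DJ$ is lex with small domain and $\psi$ lies in $\sat \Phi[\L]$. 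This closes the loop (1)~$\Rightarrow$~(2)~$\Rightarrow$~(3). Passing to the adjunction side, (6)~$\Rightarrow$~(7) is by the general fact that, for fully faithful $W$, the pointwise $\Lan_W(1_\C)$ is a left adjoint to $W$; and (7)~$\Rightarrow$~(8) holds trivially taking $\D = \C$.

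It remains to show (8)~$\Rightarrow$~(1), and this is where I would take most care. Let $\C$ be reflective in some $\app \Phi \D$, with reflector $L \dashv J$. The key sublemma is that $\app \Phi \D$ is itself $\sat \Phi$-lex-cocomplete; this follows from the reformulation of the submonad axioms recorded in the proof of Proposition~\ref{prop:phicchar}, namely that $\submonad \Phi \D = \app \Phi \D$ is closed in $\P \D$ under $\submonad \Phi$-lex-colimits, so that for any small, finitely complete $\K$, any $\phi \in \app \Phi \K$ and any lex functor $F \colon \K \to \app \Phi \D$, the colimit $\phi \star F$ exists in $\app \Phi \D$ (computed in $\P \D$). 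Granted this, given lex $D \colon \K \to \C$ with $\K$ small, finitely complete and $\phi \in \app \Phi \K$, the composite $JD \colon \K \to \app \Phi \D$ is lex (as $J$ preserves limits), so $\phi \star JD$ exists in $\app \Phi \D$; applying the left adjoint $L$ and using $LJ \cong 1_\C$ gives $\phi \star D \cong L(\phi \star JD)$ in $\C$, proving (1).

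The main obstacle is this last step: one has to recognise that reflectivity in $\app \Phi \D$ automatically supplies $\sat \Phi$-lex-colimits rather than merely $\Phi$-lex-colimits. Once the sublemma about $\app \Phi \D$ being $\sat \Phi$-lex-cocomplete is isolated from the proof of Proposition~\ref{prop:phicchar}, the remainder of (8)~$\Rightarrow$~(1) is routine; similarly, the one other non-immediate implication (1)~$\Rightarrow$~(2) rests entirely on the small-accessibility result Proposition~\ref{prop:keylemma}. Everything else is bookkeeping with Kan-extension and adjunction formulas.
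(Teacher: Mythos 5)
Your proposal is correct and follows exactly the route the paper intends: the paper gives no separate proof, stating only that the argument is "entirely analogous" to Proposition~\ref{prop:charcocomp} with Proposition~\ref{prop:keylemma} supplying the implication (1)~$\Rightarrow$~(2), and your write-up fills in precisely those details (including the correct observation, implicit in the paper, that $\app \Phi \D$ is itself $\sat\Phi$-lex-cocomplete because it is closed in $\P\D$ under $\submonad\Phi$-lex-colimits, and that the inclusion $J$ is lex because it is a right adjoint).
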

The proof of this result is entirely analogous to that of
Proposition~\ref{prop:charcocomp}, though making essential use of
Proposition~\ref{prop:keylemma} for the implication (1) $\Rightarrow$ (2). Let
us remark on a further condition the finitely complete $\C$ may fulfil which is
\emph{not} on the preceding list, by virtue of its being strictly weaker:
namely, the condition of being $\Phi$-lex-cocomplete, as opposed to
$\Phi^\ast$-lex-cocomplete. Whilst the two are equivalent for many important
classes of weights, it need not always be so. For example, in
Section~\ref{subsec:reflexive} below, we shall meet a class of lex-weights
$\Phi_\mathrm{rc}$ such that a finitely complete $\cat{Set}$-category $\C$ is
$\Phi_\mathrm{rc}$-lex-cocomplete just when it admits coequalisers of reflexive
pairs. However, for such a $\C$ to be $\Phi^\ast_\mathrm{rc}$-lex-cocomplete,
it must admit certain additional colimits, related to the construction of the
free equivalence relation on a reflexive relation;  see
Proposition~\ref{prop:reflexivechar}. The reason for the discrepancy is that,
on closing the representables in $\P \C$ under coequalisers of reflexive pairs,
the resultant subcategory need no longer be closed under finite limits; and taking
this closure---as we must do in forming $\Phi_\mathrm{rc} \C$---introduces new
weights for colimits, not constructible from coequalisers of reflexive pairs alone, which
any $\sat \Phi$-lex-cocomplete category must admit. If, however, $\Phi$ is a class of lex-weights with the property that the closure of the
representables in $\P \C$ under $\Phi$-lex-colimits is already closed under
finite limits---as happens in the case $\V = \cat{Set}$ when $\Phi$ is the
class of weights for finite coproducts, or for coequalisers of kernel-pairs, or
for coequalisers of equivalence relations---then $\Phi$-lex-cocompleteness
\emph{does} coincide with $\sat \Phi$-lex-cocompleteness. 

We now characterise, as promised, those finitely complete $\C$ for which $W
\colon \C \to \app \Phi \C$ admits not just a left adjoint, but a left exact
one. The proof is exactly analogous to that of
Proposition~\ref{prop:charlexcocomp}.
\begin{Prop}\label{prop:charphilexcocomp}Let $\Phi$ be a class of lex-weights, and
let $\C$ be $\sat \Phi$-lex-cocomplete. Then the following are equivalent:
\begin{enumerate}
\item For each lex $D \colon \K \to \C$ with $\K$ small, the functor
    $(\thg) \star D \colon \app \Phi \K \to \C$ is also lex;
\item For each lex $D \colon \K \to \C$, the functor $(\thg) \star D \colon
    \app \Phi \K \to \C$ is also lex;
\item The functor $(\thg) \star 1_\C \colon \app \Phi \C \to \C$ is lex;
\item For each lex $D \colon \K \to \C$ with $\K$ small, the functor
    $\Lan_W D \colon \app \Phi \K \to \C$ is also lex;
\item For each lex $D \colon \K \to \C$, the functor $\Lan_W D \colon \app
    \Phi \K \to \C$ is also lex;
\item The functor $\Lan_W (1_\C) \colon \app \Phi \C \to \C$ is lex;
\item The functor $W \colon \C \to \app \Phi \C$ admits a left exact left
    adjoint;
\item $\C$ admits a structure of $\submonad \Phi$-pseudoalgebra;
\item $\C$ is lex-reflective in some $\app \Phi \D$.
\end{enumerate}
\end{Prop}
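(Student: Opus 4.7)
The plan is to follow the structure of the proof of Proposition~\ref{prop:charlexcocomp} mutatis mutandis, with three systematic replacements: the Yoneda embedding $Y$ becomes $W \colon \C \to \app \Phi \C$; the free cocompletion pseudomonad $\P$ becomes $\submonad \Phi$; and the appeal to Proposition~\ref{prop:lift} is replaced by the structural facts, established in the discussion of full submonads above, that $\submonad \Phi$ is a lax-idempotent pseudomonad on $\cat{LEX}$ (so that $\app \Phi F$ for lex $F$, and the multiplication $M \colon \app \Phi \app \Phi \D \to \app \Phi \D$, are lex), and that the elementary smallness witness used for a presheaf in $\P \K$ is replaced, where needed, by the accessibility statement of Proposition~\ref{prop:keylemma}.

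Most of the implications are then routine. The equivalences (3)$\Leftrightarrow$(6), (2)$\Leftrightarrow$(5) and (1)$\Leftrightarrow$(4) all come from the identity $\Lan_W D(\phi) \cong \phi \star D$, which is available because $W$ is fully faithful and its image is dense in $\app \Phi \C$ (combined with Proposition~\ref{prop:charphicocomp} to know the colimits in question exist). The implication (6)$\Rightarrow$(7) holds because the pointwise Kan extension $\Lan_W(1_\C)$, whenever it exists, is automatically left adjoint to the fully faithful $W$; (7)$\Leftrightarrow$(8) is the lax-idempotence of $\submonad \Phi$; and (7)$\Rightarrow$(9) uses again only that $W$ is fully faithful, so any left adjoint presents $\C$ as lex-reflective in $\app \Phi \C$.

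For (9)$\Rightarrow$(1): given $L \dashv J \colon \C \to \app \Phi \D$ with $\D$ finitely complete, and given lex $D \colon \K \to \C$ with $\K$ small, one exhibits $(\thg) \star D \colon \app \Phi \K \to \C$ as isomorphic to the composite
\[
\app \Phi \K \xrightarrow{\app \Phi D} \app \Phi \C \xrightarrow{\app \Phi J} \app \Phi \app \Phi \D \xrightarrow{M} \app \Phi \D \xrightarrow{L} \C,
\]
each factor of which is left exact, the first three because $\submonad \Phi$ is a pseudomonad on $\cat{LEX}$ and the last by hypothesis.

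The only step that requires genuine work is (1)$\Rightarrow$(2), and this is where I expect the main obstacle to lie, though the obstacle is one of bookkeeping rather than of conceptual difficulty. Given lex $D \colon \K \to \C$, a finite weight $\psi \colon \M \to \V$ and $H \colon \M \to \app \Phi \K$, I would apply Proposition~\ref{prop:keylemma} to each $HX$ to obtain a small, full, replete, lex subcategory $J_X \colon \L_X \hookrightarrow \K$ with $HX \cong \Lan_{J_X^\op}(HX \circ J_X)$. Since $\M$ has finitely many isomorphism classes of objects, the lex closure $J \colon \L \hookrightarrow \K$ of the finite union of the $\L_X$'s is again small and lex, and one obtains $H \cong \Lan_{J^\op} \bar H$ for $\bar H \colon \M \to \app \Phi \L$ given pointwise by restriction. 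The chain of isomorphisms
\[
\{\psi, H\} \star D \cong (\Lan_{J^\op}\{\psi,\bar H\}) \star D \cong \{\psi, \bar H\} \star DJ \cong \{\psi?, \bar H? \star DJ\} \cong \{\psi?, H? \star D\}
\]
displayed in the proof of Proposition~\ref{prop:charlexcocomp} then applies verbatim; the critical ingredient is that $\app \Phi J \colon \app \Phi \L \to \app \Phi \K$ (which restricts $\Lan_{J^\op}$) is left exact, and the passage from the second to the third expression uses (1) applied to the small-domain lex functor $DJ \colon \L \to \C$.
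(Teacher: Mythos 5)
Your proposal is correct and matches the paper, whose entire proof of this proposition is the remark that it is ``exactly analogous'' to that of Proposition~\ref{prop:charlexcocomp}; you have simply spelled out that analogy, including the one genuinely non-mechanical adaptation (using Proposition~\ref{prop:keylemma} in place of the elementary smallness witness for the implication (1)~$\Rightarrow$~(2)), which is precisely the adaptation the paper intends.
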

We shall call a category $\C$ satisfying any of the equivalent hypotheses of
this proposition \emph{$\Phi$-exact}. Although the definition is given in 
terms of $\sat \Phi$-lex-cocompleteness, rather than $\Phi$-lex-cocompleteness,
it turns out that in the $\Phi$-exact context, the distinction between the two disappears: see Remark~\ref{rk:artificial} below.

We now consider the appropriate notion of morphism between $\Phi$-exact categories. We say that
a functor $F \colon \C \to \D$ between $\Phi$-lex-cocomplete categories is
\emph{$\Phi$-lex-cocontinuous} if it is left exact, and for every $\phi \in
\Phi[\K]$ and lex $D \colon \K \to \C$, the canonical comparison map $\phi
\star FD \to F(\phi \star D)$ is invertible.
\begin{Prop}\label{prop:charphilexmaps}
Let $\C$ and $\D$ be $\Phi$-exact categories, and $F \colon \C \to \D$ a left
exact functor between them. Then the following are equivalent:
\begin{enumerate}
\item $F$ admits a structure of algebra pseudomorphism with respect to some
    (equivalently, any) choice of $\submonad \Phi$-pseudoalgebra structure
    on $\C$ and $\D$;
\item The natural transformation
\begin{equation*}
\cd{
    \app \Phi \C \ar[d]_{(\thg) \star 1_\C} \ar[r]^{\app \Phi F} \dtwocell{dr}{\alpha} & \app \Phi \D \ar[d]^{(\thg) \star 1_\D} \\
    \C \ar[r]_{F} & \D}
\end{equation*}
obtained as the mate under the adjunctions $(\thg) \star 1_\C \dashv W_\C$
and $(\thg) \star 1_\D \dashv W_\D$ of the equality $\app \Phi F. W_\C =
W_\D.F$, is invertible;
\item $F$ is $\sat \Phi$-lex-cocontinuous;
\item $F$ is $\Phi$-lex-cocontinuous.
\end{enumerate}
\end{Prop}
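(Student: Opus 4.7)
The plan is to prove the chain $(1) \Leftrightarrow (2) \Rightarrow (3) \Rightarrow (4) \Rightarrow (2)$, exploiting the lax-idempotence of $\submonad \Phi$ for the first step, a Kan-extension calculation plus Proposition~\ref{prop:keylemma} for the second, the inclusion $\Phi \subseteq \sat \Phi$ for the third, and a closure argument based on Proposition~\ref{prop:phicchar} for the last.

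First, $(1) \Leftrightarrow (2)$ is formal. Since $\submonad \Phi$ is lax-idempotent with fully faithful units, pseudoalgebra structures on a given object are essentially unique (so ``some'' and ``any'' in (1) coincide), and a lex morphism $F \colon \C \to \D$ admits algebra pseudomorphism structure precisely when the mate of the identity $\app \Phi F \cdot W_\C = W_\D \cdot F$ under the unit-counit adjunctions $(\thg) \star 1 \dashv W$ is invertible; this is exactly the condition that $\alpha$ be invertible. I would just cite the standard fact about lax-idempotent pseudomonads.

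For $(2) \Rightarrow (3)$: given a lex $D \colon \K \to \C$ with $\K$ small, one observes that $\Lan_W D = ((\thg) \star 1_\C) \circ \app \Phi D \colon \app \Phi \K \to \C$, since both sides agree on representables and both preserve $\sat \Phi$-lex-colimits. Applying the invertible mate $\alpha$ then gives, for each $\phi \in \app \Phi \K$,
\[
F(\phi \star D) \cong F \cdot ((\thg) \star 1_\C) \cdot \app \Phi D(\phi) \xrightarrow{\alpha^{-1}} ((\thg) \star 1_\D) \cdot \app \Phi F \cdot \app \Phi D(\phi) \cong \phi \star FD,
\]
using pseudofunctoriality $\app \Phi F \cdot \app \Phi D \cong \app \Phi(FD)$. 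The general case of a lex $D \colon \K \to \C$ with $\K$ possibly large is reduced to the small case via Proposition~\ref{prop:keylemma}: write $\phi \cong \Lan_{J^\op}(\phi J)$ for some lex full subcategory inclusion $J \colon \L \hookrightarrow \K$ with $\L$ small, so that $\phi \star D \cong \phi J \star DJ$ and similarly after applying $F$. The implication $(3) \Rightarrow (4)$ is immediate since $\Phi \subseteq \sat \Phi$.

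Finally $(4) \Rightarrow (2)$ is the main content. Let $\S$ be the class of $\phi \in \app \Phi \C$ for which the component $\alpha_\phi \colon \phi \star F \to F(\phi \star 1_\C)$ is invertible; I need to show $\S = \app \Phi \C$, and by Proposition~\ref{prop:phicchar} it suffices to check that $\S$ contains the representables and is closed in $\app \Phi \C$ under finite limits and $\Phi$-lex-colimits. For representables, both sides evaluate to $F c$. Closure under finite limits follows because all four functors $(\thg) \star 1_\D$, $\app \Phi F$, $F$ and $(\thg) \star 1_\C$ are lex (by Propositions~\ref{prop:lift} and~\ref{prop:charphilexcocomp}), so $\alpha$ is a natural transformation of lex functors. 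For closure under $\Phi$-lex-colimits, given $\psi \in \Phi[\K]$ and a lex $E \colon \K \to \app \Phi \C$ landing in $\S$, I use associativity of weighted colimits to rewrite both $(\psi \star E) \star F \cong \psi \star (E \star F)$ and $(\psi \star E) \star 1_\C \cong \psi \star (E \star 1_\C)$; the composite $E \star 1_\C = ((\thg)\star 1_\C) \circ E$ is a lex functor $\K \to \C$, so the $\Phi$-lex-cocontinuity hypothesis on $F$ gives $F(\psi \star (E \star 1_\C)) \cong \psi \star F(E \star 1_\C)$, and the hypothesis $E(k) \in \S$ identifies $F(E \star 1_\C)$ pointwise with $E \star F$. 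Assembling these isomorphisms yields $\alpha_{\psi \star E}$ invertible. The main obstacle is keeping the associativity/mate identifications straight in this last step, but there are no substantive difficulties beyond bookkeeping.
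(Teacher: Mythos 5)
Your proposal is correct and follows essentially the same route as the paper: $(1)\Leftrightarrow(2)$ by lax-idempotence, $(2)\Rightarrow(3)\Rightarrow(4)$ directly, and $(4)\Rightarrow(2)$ by showing via Proposition~\ref{prop:phicchar} that the $\phi$ with $\alpha_\phi$ invertible contain the representables and are closed under finite limits and $\Phi$-lex-colimits. The only differences are cosmetic: the detour through Proposition~\ref{prop:keylemma} in $(2)\Rightarrow(3)$ is unnecessary since $\sat\Phi$-lex-cocontinuity only quantifies over small $\K$ by definition, and in the last step the paper replaces your hands-on associativity computation by the observation that both composites around the square preserve $\Phi$-lex-colimits (the one non-obvious point being that $\app\Phi F$ does, since $\P F$ is cocontinuous and $\app\Phi\C$, $\app\Phi\D$ are closed under them).
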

\begin{proof}
(1) $\Leftrightarrow$ (2) since the pseudomonad $\submonad \Phi$ is
lax-idempotent. To see that (2) $\Rightarrow$ (3), suppose that the displayed
$2$-cell $\alpha$ is invertible; then for any $\phi \in \sat \Phi[\K] = \app
\Phi \K$ and left exact $D \colon \K \to \C$, the canonical map $\phi \star FD
\to F(\phi \star D)$ in $\D$ is, to within isomorphism, the component of
$\alpha$ at $\app \Phi D(\phi)$, and hence invertible: which gives (3). It is
clear that (3) $\Rightarrow$ (4), and so it remains to show that (4)
$\Rightarrow$ (2). It suffices by Proposition~\ref{prop:phicchar} to show that
if $F$ preserves $\Phi$-lex-colimits, then the collection of $\phi \in \app
\Phi \C$ for which $\alpha_\phi$ is invertible contains the representables and
is closed under finite limits and $\Phi$-lex-colimits. The first clause is
immediate; the others follow on observing that the composites around both sides
of the square in~(2) preserve finite limits and $\Phi$-lex-colimits: the only
non-obvious fact being that $\app \Phi F$ preserves $\Phi$-lex-colimits, which
follows on observing that $\P F$ does so, being cocontinuous, and that $\app
\Phi \C$ and $\app \Phi \D$ are closed in $\P \C$ and $\P \D$ under
$\Phi$-lex-colimits.
\end{proof}

We define a functor $F \colon \C \to \D$ between $\Phi$-exact categories to be
\emph{$\Phi$-exact} when it satisfies any of the equivalent conditions of this
proposition. We may now verify the claim made in the Introduction that $\app
\Phi \C$ constitutes a free $\Phi$-exact completion of $\C$. In what follows,
we write $\Phi\text-\cat{EX}$ for the $2$-category of $\Phi$-exact categories,
$\Phi$-exact functors and arbitrary natural transformations.

\begin{Prop}
$\Phi\text-\cat{EX}$ is biequivalent to the $2$-category
$\cat{Ps}\text-\submonad \Phi\text-\cat{Alg}$ of $\submonad
\Phi$-pseudoalgebras, pseudoalgebra morphisms and algebra $2$-cells.
\end{Prop}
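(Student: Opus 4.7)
The plan is to exhibit the biequivalence as the $2$-functor
$\tilde U \colon \cat{Ps}\text-\submonad\Phi\text-\cat{Alg} \to \Phi\text-\cat{EX}$
induced by the evident forgetful $2$-functor $U \colon \cat{Ps}\text-\submonad\Phi\text-\cat{Alg} \to \cat{LEX}$; it sends a pseudoalgebra to its underlying finitely complete category, a pseudomorphism $(F, \bar F)$ to $F$, and an algebra $2$-cell to its underlying natural transformation. I first check that $U$ factors through $\Phi\text-\cat{EX}$: on objects, Proposition~\ref{prop:charphilexcocomp}(8) asserts that any pseudoalgebra underlies a $\Phi$-exact category; on $1$-cells, Proposition~\ref{prop:charphilexmaps}(1) asserts that any pseudoalgebra morphism is $\Phi$-exact as a functor.

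For essential surjectivity of $\tilde U$, the same two propositions supply the converses: every $\Phi$-exact category admits some $\submonad\Phi$-pseudoalgebra structure, and every $\Phi$-exact functor between two such categories admits some pseudomorphism structure. Hence $\tilde U$ is biessentially surjective on objects and locally essentially surjective on $1$-cells.

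The main obstacle is to show that $\tilde U$ is locally fully faithful on $2$-cells; here the decisive ingredient is the lax-idempotence of $\submonad\Phi$. Concretely, with the pseudoalgebra structures presented via the left adjoints to $W_\C$ and $W_\D$ of Proposition~\ref{prop:charphilexcocomp}(7), and with the pseudomorphism structure on a $1$-cell $F$ recovered as the mate of the strict identity $\app\Phi F \cdot W_\C = W_\D \cdot F$ as in Proposition~\ref{prop:charphilexmaps}(2), an arbitrary natural transformation $\alpha \colon F \Rightarrow G$ between $\Phi$-exact functors will yield an algebra $2$-cell automatically: the required coherence condition between $\alpha$ and the two mate structures for $F$ and $G$ becomes an instance of the naturality of the mate correspondence applied to $\app\Phi\alpha \colon \app\Phi F \Rightarrow \app\Phi G$, and so is satisfied without further hypothesis. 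Local faithfulness is trivial, since algebra $2$-cells are natural transformations subject to a condition.

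Combining these observations, $\tilde U$ is biessentially surjective on objects and locally an equivalence of categories, and is therefore a biequivalence.
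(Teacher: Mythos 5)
Your proposal is correct and follows essentially the same route as the paper: both factor the forgetful $2$-functor through $\Phi\text-\cat{EX}$ via Propositions~\ref{prop:charphilexcocomp} and~\ref{prop:charphilexmaps}, use lax-idempotence of $\submonad\Phi$ for local full faithfulness on $2$-cells, and conclude by surjectivity on objects and fullness on $1$-cells. The only difference is that you spell out the mate-correspondence argument that the paper compresses into the single assertion that $U$ is faithful and locally fully faithful because $\submonad\Phi$ is lax-idempotent.
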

\begin{proof}
By Propositions~\ref{prop:charphilexcocomp} and~\ref{prop:charphilexmaps}, the
forgetful $2$-functor $U \colon \cat{Ps}\text-\submonad \Phi\text-\cat{Alg} \to
\cat{LEX}$ factors through $\Phi\text-\cat{EX} \to \cat{LEX}$, as
$V$, say. Since $\submonad \Phi$ is lax-idempotent, $U$ is faithful and locally
fully faithful, whence also $V$. By Proposition~\ref{prop:charphilexmaps}, $V$
is also full, hence $2$-fully faithful; since it is moreover clearly surjective
on objects, it is a biequivalence.
\end{proof}

\begin{Cor}\label{cor:biadjoint}
The forgetful $2$-functor $\Phi\text-\cat{EX} \to \cat{LEX}$ has a left
biadjoint; the unit of this biadjunction at the left exact $\C$ may be taken to
be $W \colon \C \to \app \Phi \C$.
\end{Cor}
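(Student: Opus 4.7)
The plan is to deduce this directly from the biequivalence $\Phi\text-\cat{EX} \simeq \cat{Ps}\text-\submonad\Phi\text-\cat{Alg}$ established in the preceding proposition, together with the standard fact that, for any pseudomonad $T$ on a $2$-category $\K$, the forgetful $2$-functor $U \colon \cat{Ps}\text-T\text-\cat{Alg} \to \K$ admits a left biadjoint whose unit at $X \in \K$ is the component $\eta_X \colon X \to TX$ of the unit of $T$. Applying this general fact to $T = \submonad\Phi$ on $\K = \cat{LEX}$ and composing with the biequivalence of the preceding proposition then yields the desired result.

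To spell this out slightly: the free pseudoalgebra on an object $X$ is $TX$ equipped with the multiplication $\mu_X \colon TTX \to TX$ as its structure map, and the counit of the biadjunction at a pseudoalgebra $(A, a)$ is the structure map $a \colon TA \to A$ itself. The universal property to be verified is that, for every pseudoalgebra $(A, a)$ and every morphism $f \colon X \to A$ in $\K$, there exists a pseudoalgebra morphism $\bar f \colon TX \to A$ with $\bar f \circ \eta_X \cong f$, and this $\bar f$ is essentially unique; one takes $\bar f = a \circ Tf$. All of this is particularly transparent in the lax-idempotent case, since a pseudoalgebra structure on $A$ is just a left adjoint $a \dashv \eta_A$ with invertible counit, so that the required extension $\bar f$ can be constructed and shown unique by elementary adjoint arguments.

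In our situation, this gives a left biadjoint to $\cat{Ps}\text-\submonad\Phi\text-\cat{Alg} \to \cat{LEX}$ whose unit at the finitely complete $\C$ is the pseudomonad unit $W \colon \C \to \app\Phi\C$. The biequivalence $V \colon \cat{Ps}\text-\submonad\Phi\text-\cat{Alg} \to \Phi\text-\cat{EX}$ of the preceding proposition is the identity on underlying finitely complete categories (and on the underlying left exact functors, by Proposition~\ref{prop:charphilexmaps}), so composing the left biadjoint just constructed with $V$ produces a left biadjoint to the forgetful $2$-functor $\Phi\text-\cat{EX} \to \cat{LEX}$ with the same unit $W \colon \C \to \app\Phi\C$, as required. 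There is no serious obstacle: the work has already been done in the preceding sections, where we built $\submonad\Phi$ as a lax-idempotent pseudomonad on $\cat{LEX}$ and identified its pseudoalgebras with the $\Phi$-exact categories.
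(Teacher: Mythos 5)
Your proposal is correct and is essentially the argument the paper intends: the Corollary is stated as an immediate consequence of the preceding biequivalence together with the standard existence of a left biadjoint to the forgetful $2$-functor from pseudoalgebras of a (lax-idempotent) pseudomonad, with unit the pseudomonad unit $W \colon \C \to \app \Phi \C$. The paper leaves this reasoning implicit, and your write-up simply makes it explicit along the same lines.
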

Combining this result with Proposition~\ref{prop:phicchar}, we conclude, as was
claimed in the Introduction, that the free $\Phi$-exact completion of the
finitely complete $\C$ may be obtained as the closure of the representables in
$\P \C$ under finite limits and $\Phi$-lex-colimits. In fact, we can be more
precise about the nature of the left biadjoint we have just described.
\begin{Prop}\label{prop:equivalencepseudoinverse}
For every finitely complete $\C$ and $\Phi$-exact category $\E$, the functor
\begin{equation*}
    W^\ast \colon \Phi\text-\cat{EX}(\app \Phi \C, \E) \to \cat{LEX}(\C, \E)
\end{equation*}
induced by precomposition with $W$---which Corollary~\ref{cor:biadjoint}
assures us is an equivalence of categories---has equivalence pseudoinverse
given by left Kan extension along $W$.
\end{Prop}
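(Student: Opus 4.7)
My plan is to exhibit an explicit candidate for the pseudoinverse using the $\submonad\Phi$-pseudoalgebra structure on $\E$, then identify this candidate with the left Kan extension along $W$ via standard coend manipulations. Since Corollary~\ref{cor:biadjoint} already ensures $W^\ast$ is an equivalence, and pseudoinverses to an equivalence are unique up to natural isomorphism, it will suffice to prove that $\Lan_W F$ exists, is $\Phi$-exact, and satisfies $(\Lan_W F) \circ W \cong F$ naturally in $F$.

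I first construct the candidate. Since $\E$ is $\Phi$-exact, Proposition~\ref{prop:charphilexcocomp}(7) supplies a lex left adjoint $L \colon \app\Phi\E \to \E$ to $W_\E$, and full faithfulness of $W_\E$ makes the counit $LW_\E \cong 1_\E$ invertible. For $F \colon \C \to \E$ in $\cat{LEX}$ I set $\bar F \defeq L \circ \app\Phi F$. Naturality of $W$ combined with $LW_\E \cong 1_\E$ then gives $\bar F W_\C \cong F$. Furthermore $\bar F$ is $\Phi$-exact: it is lex because $L$ and $\app\Phi F$ are so---the latter because $\app\Phi F$ is the restriction of the lex $\P F$ to the lex-closed subcategories $\app\Phi\C$, $\app\Phi\E$ (Proposition~\ref{prop:phicchar})---and it preserves $\Phi$-lex-colimits since $L$ does as a left adjoint, while $\app\Phi F$ does too, being the restriction of the cocontinuous $\P F$ to full subcategories that are closed under $\Phi$-lex-colimits.

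Next I identify $\bar F$ with $\Lan_W F$. The Kan extension exists by Proposition~\ref{prop:charphicocomp}(5), and the pointwise formula together with Yoneda---as $W$ is the restricted Yoneda embedding, the presheaf $c \mapsto \app\Phi\C(Wc, \phi)$ is simply $\phi$---yields $(\Lan_W F)(\phi) \cong \phi \star F$. On the other hand, $\app\Phi F$ is the restriction of $\P F$, which by construction sends $\phi$ to $\Lan_{F^\op}\phi$, while $L = (\thg) \star 1_\E$ by Proposition~\ref{prop:charphilexcocomp}(3); the standard interchange of Kan extension with weighted colimit then identifies $\bar F(\phi) = (\Lan_{F^\op}\phi) \star 1_\E$ with $\phi \star F$. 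Both isomorphisms being natural in $\phi$, we conclude $\bar F \cong \Lan_W F$, so that $\Lan_W F$ inherits $\Phi$-exactness and the required compatibility $(\Lan_W F) W \cong F$; by the uniqueness of pseudoinverses this exhibits $\Lan_W$ as one.

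The only substantive step is verifying that $\app\Phi F$ respects finite limits and $\Phi$-lex-colimits, but this is routine bookkeeping against Proposition~\ref{prop:phicchar} once the corresponding properties of $\P F$ are in hand. Everything else reduces to standard facts about pointwise left Kan extensions and the standard coend manipulations relating $\Lan_{F^\op}$ to weighted colimits.
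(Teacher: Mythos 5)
Your proof is correct, but it takes a slightly different route from the paper's. The paper's own argument is more direct: it notes that $\Lan_W F$ exists and is left exact by Propositions~\ref{prop:charphicocomp} and~\ref{prop:charphilexcocomp}, writes $\Lan_W F \cong J({\thg}) \star F$, and deduces preservation of $\Phi$-lex-colimits from the two facts that the inclusion $J \colon \app \Phi \C \to \P \C$ preserves them and that $({\thg}) \star F$ is cocontinuous in the weight; the triangle $(\Lan_W F)W \cong F$ then follows from full faithfulness of $W$. You instead build the pseudomonad-theoretic candidate $\bar F = L \circ \app \Phi F$ (the underlying functor of the algebra pseudomorphism implicit in Proposition~\ref{prop:charphilexmaps}), verify its $\Phi$-exactness by splitting the work between $\app \Phi F$ (restriction of the lex, cocontinuous $\P F$ to subcategories closed under finite limits and $\Phi$-lex-colimits, per Proposition~\ref{prop:phicchar}) and the lex left adjoint $L$, and then identify $\bar F$ with $\Lan_W F$ via the interchange $\Lan_{F^\op}(\phi) \star 1_\E \cong \phi \star F$, invoking uniqueness of pseudoinverses to conclude. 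Both routes rest on the same ingredients, so yours is a correct, mildly longer rearrangement; what it buys is an explicit description of the pseudoinverse as $F \mapsto L \circ \app \Phi F$, which makes the connection with the general theory of lax-idempotent pseudomonads transparent, at the cost of an extra identification step that the paper's formula $\Lan_W F \cong J({\thg}) \star F$ renders unnecessary.
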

\begin{proof}
Suppose given $F \colon \C \to \E$ lex. By
Propositions~\ref{prop:charphicocomp} and~\ref{prop:charphilexcocomp}, $\Lan_W
F$ exists and is left exact. Moreover, we have $\Lan_W F \cong J \thg \star F$;
and so $\Lan_W F$ preserves $\Phi$-lex-colimits because $J$ does and taking
colimits is cocontinuous in the weight. Thus $\Lan_W F$ is $\Phi$-exact, and
since $(\Lan_W F)W \cong F$, as $W$ is fully faithful, $\Lan_W$ is an
equivalence pseudoinverse for $W^\ast$ as claimed.
\end{proof}

\section{The embedding theorem}\label{sec:embedding}
In the next section, we shall begin to describe, in elementary terms, what the
notion of $\Phi$-exactness amounts to for some particular choices of $\Phi$. In
doing so, we will make repeated use of one further result, which characterises
the $\Phi$-exact categories in terms of the embeddings they admit.
\begin{Thm}\label{thm:embedding}
Let $\Phi$ be a class of lex-weights, and $\C$ a small $\Phi$-lex-cocomplete
category. Then the following conditions are equivalent:
\begin{enumerate}
\item $\C$ admits a full $\Phi$-lex-cocontinuous embedding into a
    $\V$-topos;
\item $\C$ admits a full $\Phi$-lex-cocontinuous embedding into a
    small-exact category;
\item $\C$ admits a full $\Phi$-lex-cocontinuous embedding into a
    $\Phi$-exact category;
\item $\C$ is $\Phi$-exact.
\end{enumerate}
Moreover, even when $\C$ is not small, we still have (1) $\Rightarrow$ (2)
$\Rightarrow$ (3) $\Rightarrow$ (4).
\end{Thm}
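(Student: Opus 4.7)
The strategy is to prove (1)$\Rightarrow$(2)$\Rightarrow$(3)$\Rightarrow$(4) for arbitrary $\C$, and then close the loop via (4)$\Rightarrow$(1) under the smallness hypothesis. The first two implications are formal: (1)$\Rightarrow$(2) is Proposition~\ref{prop:vtopsmallexact}, and (2)$\Rightarrow$(3) follows because, for a small-exact $\E$, the lex left adjoint to $Y \colon \E \to \P \E$ composed with the lex inclusion $J \colon \app \Phi \E \hookrightarrow \P \E$ supplies a lex left adjoint to $W \colon \E \to \app \Phi \E$ via the factorisation $Y = JW$; this realises $\E$ as $\Phi$-exact through clause~(7) of Proposition~\ref{prop:charphilexcocomp}.

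The first substantive implication is (3)$\Rightarrow$(4), which I expect to prove with no size hypothesis on $\C$. Given a full $\Phi$-lex-cocontinuous embedding $i \colon \C \hookrightarrow \E$ with $\E$ a $\Phi$-exact category, I verify clause~(1) of Proposition~\ref{prop:charphilexcocomp} for $\C$: for each small lex $D \colon \K \to \C$, the functor $(\thg) \star D \colon \app \Phi \K \to \C$ must both exist and be lex. Two closure facts drive the argument: the essential image $i(\C)$ is closed in $\E$ under finite limits (since $i$ is a fully faithful lex functor) and under $\Phi$-lex-colimits (since $\C$ admits these and $i$ preserves them). Since $\E$ is $\Phi$-exact, the functor $(\thg) \star iD \colon \app \Phi \K \to \E$ exists and is lex. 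An induction along the construction of $\app \Phi \K$ furnished by Proposition~\ref{prop:phicchar}---starting from representables and iterating finite limits and $\Phi$-lex-colimits---then shows that $\phi \star iD$ lies in $i(\C)$ for every $\phi \in \app \Phi \K$. Full-faithfulness of $i$ pulls the colimit back to $\C$ to produce $\phi \star D$; and since a fully faithful lex functor reflects limits, the lex-ness of $(\thg) \star iD$ transfers to $(\thg) \star D$.

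The main obstacle is (4)$\Rightarrow$(1), where smallness of $\C$ is required and which the excerpt flags as deferred to Section~7 and the Appendix. The plan is to build the desired $\V$-topos as a reflective subcategory of the presheaf $\V$-topos $\P \C = [\C^\op, \V]$, obtained by inverting the canonical comparison maps
\[
    \phi \star Y D \;\longrightarrow\; Y(\phi \star D) \qquad \bigl(\K \text{ small},\ \phi \in \Phi[\K],\ D \colon \K \to \C \text{ lex}\bigr).
\]
Each representable $Yc$ is already local with respect to every such map, because the induced $\C(\phi \star D, c) \to [\phi, \C(D(\thg), c)]$ is precisely the defining isomorphism of $\phi \star D$ as a weighted colimit in $\C$; consequently the composite of Yoneda with the reflector into the resulting $\E$ is fully faithful, and sends $\phi \star D$ to $\phi \star YD$ by construction, whence it is $\Phi$-lex-cocontinuous. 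The crux, and what I expect to be the main difficulty, is to show that the reflector $\P \C \to \E$ is itself left exact so that $\E$ genuinely qualifies as a $\V$-topos. This will require the full force of $\Phi$-exactness of $\C$ to arrange that the class of inverted maps is suitably pullback-stable, together with general technology for lex localisations of locally presentable $\V$-categories---presumably the technical content of the Appendix.
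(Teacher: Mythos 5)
Your chain (1)$\Rightarrow$(2)$\Rightarrow$(3)$\Rightarrow$(4) is correct and is essentially the paper's argument: (1)$\Rightarrow$(2) and (2)$\Rightarrow$(3) are as you say, and your (3)$\Rightarrow$(4) is the same induction over the closure description of $\app \Phi \K$ given by Proposition~\ref{prop:phicchar} that the paper performs, merely phrased for the functors $(\thg)\star iD \colon \app \Phi \K \to \E$ rather than for the single functor $\Lan_W J \colon \app \Phi \C \to \E$; both arguments rest on exactly the two closure facts you isolate and conclude via one of the equivalent clauses of Proposition~\ref{prop:charphilexcocomp}.

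The gap is in (4)$\Rightarrow$(1). You correctly identify the target category: the full subcategory $\E \subseteq [\C^\op,\V]$ of objects orthogonal to the comparison maps $\phi \star YD \to Y(\phi \star D)$ is the paper's $\P_\Phi \C$, and your verification that the representables are local and that the corestricted Yoneda embedding is full and $\Phi$-lex-cocontinuous is fine. But the entire content of the implication is the claim that $\E$ is \emph{lex}-reflective in $[\C^\op,\V]$, and here your plan---show the class of inverted maps is pullback-stable and invoke ``general technology for lex localisations''---is not a proof and would not become one along the lines you indicate. Pullback-stability of the generating class does not by itself yield left exactness of the reflector: already for $\V=\cat{Set}$, Section~\ref{sec:4} shows that what is really needed is postulatedness of the $\Phi$-lex-colimit morphisms, i.e.\ both the image condition (P1) \emph{and} the diagonal condition (P2), the latter being a condition on equalisers of pairs of maps into $\phi$ which is not a stability property of the comparison maps themselves; and for general $\V$ no such topological machinery is available at all. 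The paper's actual argument (Proposition~\ref{prop:PPhilexreflective}) is of a different character: it uses $\Phi$-exactness of $\C$ to obtain the lex left adjoint $L \dashv W$, builds the adjoint string $\Sigma_L \dashv \Sigma_W \dashv \Delta_W \dashv \Pi_W$ between $[\C^\op,\V]$ and $[(\app \Phi \C)^\op,\V]$, identifies $\P_\Phi \C$ with the intersection of the two lex-reflective subcategories given by the essential images of $\Sigma_W$ and $\Pi_W$, and then appeals to the Borceux--Kelly theorem (Proposition~\ref{prop:lexrefl}, which is the actual content of the Appendix) that a small intersection of lex-reflective subcategories of a locally presentable category is again lex-reflective. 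A further point your sketch does not address is size: when $\Phi$ is large, the orthogonality class you propose to localise at need not be small, so even the existence of a reflector is not automatic; the paper handles this by intersecting the subcategories $\P_{\{\phi\}}\C$ over $\phi \in \Phi$ and using the fact that a locally presentable category has only a small set of lex-reflective subcategories.
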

In the statement of this theorem, recall that we defined a \emph{$\V$-topos} to
be any category lex-reflective in a presheaf category.
\begin{proof}
We begin by showing that (1) $\Rightarrow$ (2) $\Rightarrow$ (3) $\Rightarrow$
(4), regardless of $\C$'s size. The first two implications are straightforward,
since every $\V$-topos is small-exact by Proposition~\ref{prop:vtopsmallexact},
whilst every small-exact category is clearly $\Phi$-exact. For (3)
$\Rightarrow$ (4), let there be given a $\Phi$-lex-cocontinuous embedding $J
\colon \C \to \E$ into a $\Phi$-exact category. By replacing $\C$ by its
replete image in $\E$, we may assume that $J$ exhibits $\C$ as a full, replete,
finite-limit- and $\Phi$-lex-colimit-closed subcategory of $\E$. Now by
Proposition~\ref{prop:equivalencepseudoinverse}, since $\E$ is $\Phi$-exact,
the left Kan extension $\Lan_W J \colon \app \Phi \C \to \E$ exists and is
$\Phi$-exact. We claim that $\Lan_W J$ factors through the subcategory $\C$;
given this, the factorisation $\app \Phi \C \to \C$ will be $\Lan_W(1_\C)$, and
left exact, since $\Lan_W J$ is, whence $\C$ will be $\Phi$-exact by
Proposition~\ref{prop:charphilexcocomp}(6). To prove the claim, observe that
the collection of $\phi \in \app \Phi \C$ at which $\Lan_W J$ lands in $\C$
contains the representables, since $W^\ast.\Lan_W \cong 1$, and is closed under
finite limits and $\Phi$-lex-colimits, since $\Lan_W J$ preserves them, and
$\C$ is closed in $\E$ under them. This proves (3) $\Rightarrow$ (4); it
remains to show that, when $\C$ is small, we have (4) $\Rightarrow$ (1). We
shall in fact defer this task until Section~\ref{sec:relative} below. There, we
will see that any small $\Phi$-exact $\C$ admits a \emph{small-exact
completion} $V \colon \C \to \appr \P \Phi \C$, and this $V$ will provide the
required full $\Phi$-exact embedding of $\C$ into a $\V$-topos; see
Corollary~\ref{prop:requiredembedding}.
\end{proof}

An obvious limitation of this theorem is that its full strength is only
available for a small $\C$. The following result allows us to work around this;
though it does so at the cost of introducing a further size constraint, this
time on $\Phi$. We call a class of lex-weights $\Phi$ \emph{small} if $\app
\Phi \C$ is small whenever $\C$ is (this condition was called \emph{locally
small} in~\cite{Kelly2005Notes}). This will certainly be the case if the
cardinality of $\Phi$ is small, as is easily seen upon giving a transfinite
construction of $\app \Phi \C$ from $\C$ in the manner
of~\cite[\S3.5]{Kelly1982Basic}.
\begin{Prop}\label{prop:smallsubphiexact}
Let $\Phi$ be a small class of lex-weights, and $\C$ a $\Phi$-lex-cocomplete
category. Now $\C$ is $\Phi$-exact if and only if every small, full,
finite-limit- and $\Phi$-lex-colimit-closed subcategory of $\C$ is
$\Phi$-exact.
\end{Prop}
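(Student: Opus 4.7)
The forward implication is a direct appeal to Theorem~\ref{thm:embedding}. If $\C$ is $\Phi$-exact and $\D\subseteq\C$ is a small, full, finite-limit- and $\Phi$-lex-colimit-closed subcategory, then the inclusion $K_\D\colon\D\hookrightarrow\C$ is a full, lex, $\Phi$-lex-cocontinuous embedding of the small $\D$ into a $\Phi$-exact category; by the implication~$(3)\Rightarrow(4)$ of Theorem~\ref{thm:embedding}---which, as the theorem records, needs no smallness hypothesis on the target---$\D$ is itself $\Phi$-exact.

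For the converse, the plan is to construct a lex left adjoint $R_\C\colon\app\Phi\C\to\C$ to $W_\C$ and then invoke Proposition~\ref{prop:charphilexcocomp}(7). Write $\Ss$ for the collection of small, full, finite-limit- and $\Phi$-lex-colimit-closed subcategories of $\C$; because $\Phi$ is small, the closure in $\C$ of any small subcategory under finite limits and $\Phi$-lex-colimits remains small, so $\Ss$ is directed under inclusion. Each $\D\in\Ss$ is $\Phi$-exact by hypothesis, providing a lex left adjoint $R_\D\colon\app\Phi\D\to\D$ to $W_\D$; for $\D\subseteq\D'$ in $\Ss$, the inclusion $K_{\D\D'}$ is lex and $\Phi$-lex-cocontinuous, hence $\Phi$-exact, so Proposition~\ref{prop:charphilexmaps} yields the mate isomorphism $K_{\D\D'}R_\D\cong R_{\D'}(\app\Phi K_{\D\D'})$. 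Given $\phi\in\app\Phi\C$, Proposition~\ref{prop:keylemma} provides a small lex $\L\hookrightarrow\C$ with $\phi\cong\Lan_{K_\L^\op}(\phi K_\L)$ and $\phi K_\L\in\app\Phi\L$; closing $\L$ further in $\C$ delivers some $\D\in\Ss$ on which $\phi$ is similarly \emph{supported}---meaning $\phi\cong\Lan_{K_\D^\op}(\phi K_\D)$ with $\phi K_\D\in\app\Phi\D$. I set $R_\C\phi:=K_\D R_\D(\phi K_\D)$; well-definedness on objects and morphisms follows by passage to common enlargements and the displayed mate. The required adjunction $\C(R_\C\phi,c)\cong\app\Phi\C(\phi,W_\C c)$ is then checked, for any given $c$, by further enlarging $\D$ to contain $c=K_\D c'$ and chaining the full faithfulness of $K_\D$, the $\D$-level adjunction $R_\D\dashv W_\D$, and the Kan-extension adjunction $\Lan_{K_\D^\op}\dashv(-)\circ K_\D^\op$, together with the easily verified identification $W_\C K_\D\cong\Lan_{K_\D^\op}W_\D$.

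The main obstacle, and where the size hypothesis on $\Phi$ is used essentially, is verifying left-exactness of $R_\C$. Given a finite diagram $D\colon\J\to\app\Phi\C$, I need to find a single $\D\in\Ss$ simultaneously supporting every $D(j)$ \emph{and} the limit $\lim_\J D$; this exists by applying Proposition~\ref{prop:keylemma} to each of the finitely many presheaves involved and absorbing the resulting witnesses into a single closure in $\Ss$. With such a $\D$ fixed, the chain
\begin{align*}
R_\C(\lim_\J D)
&=K_\D R_\D\bigl((\lim_\J D)K_\D\bigr)
=K_\D R_\D\bigl(\lim_\J(D K_\D)\bigr)\\
&=K_\D\lim_\J R_\D(D K_\D)
=\lim_\J K_\D R_\D(D K_\D)
=\lim_\J R_\C(D)
\end{align*}
appeals, in turn, to the lex-ness of restriction along $K_\D$ (as a right adjoint to $\Lan_{K_\D^\op}$), of $R_\D$, and of $K_\D$. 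This shows $R_\C$ is lex, completing the verification that $\C$ is $\Phi$-exact.
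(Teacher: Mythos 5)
Your proof is correct, and both directions rest on the same essential ingredients as the paper's (smallness of finite-limit-and-$\Phi$-lex-colimit closures, $\Phi$-exactness of the inclusions between $\Phi$-subcategories via Proposition~\ref{prop:charphilexmaps}, and the trick of enlarging the subcategory to contain a test object), but the decomposition is genuinely different. The paper verifies criterion~(4) of Proposition~\ref{prop:charphilexcocomp}: for each lex $F \colon \K \to \C$ with $\K$ small it factors $F$ through the small closure $\D$ of its image, obtains $\Lan_W G$ from the $\Phi$-exactness of $\D$, and shows the inclusion $H \colon \D \to \C$ preserves the colimits $\phi \star G$ by passing, for each test object $X$, to an enlargement $\D' \supseteq \D \cup \{X\}$. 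You verify criterion~(7) instead, gluing the local reflectors $R_\D$ over the directed poset $\Ss$ into a global left exact left adjoint to $W_\C$; this forces you to invoke Proposition~\ref{prop:keylemma} (which the paper's proof of this statement never uses) to see that every object of $\app \Phi \C$ is supported on some member of $\Ss$, and it introduces the well-definedness and coherence bookkeeping of the gluing. That bookkeeping is best discharged by noting that your adjunction isomorphism $\C(R_\C\phi, c) \cong \app\Phi\C(\phi, W_\C c)$, natural in $c$, already characterises $R_\C\phi$ up to canonical isomorphism, so functoriality and independence of the choice of support come for free; as stated, ``passage to common enlargements'' is a little breezy but not wrong, and your implicit claim that a presheaf supported on $\D$ remains supported on any enlargement $\D' \supseteq \D$ in $\Ss$ does check out (restricting $\phi \cong \Lan_{K_\D^\op}(\phi K_\D)$ along the fully faithful $K_{\D'}$ identifies $\phi K_{\D'}$ with $\app\Phi K_{\D\D'}(\phi K_\D) \in \app\Phi\D'$). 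Two small points: the left-exactness check should be phrased for weighted finite limits $\{\psi, D\}$ rather than conical ones, though the argument---restriction along $K_\D^\op$ is a right adjoint, $R_\D$ is lex, $K_\D$ is lex---goes through verbatim; and your appeal to criterion~(7) tacitly uses that the existence of any left adjoint to $W_\C$ already yields $\sat\Phi$-lex-cocompleteness via Proposition~\ref{prop:charphicocomp}, which is fine but worth saying. On balance the paper's route is more economical (one subcategory and one enlargement per verification, no gluing), while yours has the virtue of exhibiting the reflector explicitly.
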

\begin{proof}
For brevity's sake, let us temporarily agree to call any $\D \subseteq \C$ as
in the statement of this proposition a \emph{$\Phi$-subcategory} of $\C$. By
Theorem~\ref{thm:embedding}, if $\C$ is $\Phi$-exact then so are all of its
$\Phi$-subcategories. Conversely, suppose that every $\Phi$-subcategory of $\C$
is $\Phi$-exact: to show that $\C$ is too, it is enough by
Proposition~\ref{prop:charphilexcocomp}(4) to show that, for every small
finitely complete $\K$ and lex $F \colon \K \to \C$, the Kan extension $\Lan_W
F \colon \app \Phi \K \to \C$ exists and is lex. Given such an $F$, we let $\D$
denote the closure of its image in $\C$ under finite limits and
$\Phi$-lex-colimits, and write
\begin{equation*}
    \K \xrightarrow G \D \xrightarrow H \C
\end{equation*}
for the induced factorisation. Since $\Phi$ and $\K$ are small, so is $\D$; it
is therefore a $\Phi$-subcategory of $\C$ and so $\Phi$-exact by assumption.
Thus $\Lan_W G$ exists and is left exact; whence $H.\Lan_W G$ is also left
exact, and so we will be done if we can show that it is in fact $\Lan_W F$.
Equivalently, we may show that $H$ preserves $\Lan_W G$; equivalently, that for
each $\phi \in \app \Phi \K$, the colimit $\phi \star G$ in $\D$ is preserved
by $H$; or equivalently, that for each $\phi  \in \app \Phi \K$ and $X \in \C$,
the canonical morphism
\begin{equation}\label{eq:kanpropeqa}
    \C(H(\phi \star G), X) \to [({\app \Phi \K})^\op, \V](\phi, \C(HG\thg, X))
\end{equation}
is invertible in $\V$. To do this last, we let $\D'$ be the closure of
%
%
%
%
%
$\D \cup \{X\}$ in $\C$ under finite limits and $\Phi$-lex-colimits. As before,
$\D'$ is a $\Phi$-subcategory of $\C$, whence $\Phi$-exact; moreover, the
inclusion $I \colon \D \to \D'$ preserves finite limits and $\Phi$-lex-colimits
and so by Proposition~\ref{prop:charphilexmaps} is a $\Phi$-exact functor. In
particular, $I$ preserves the colimit $\phi \star G$, which is to say that the
canonical morphism
\begin{equation*}
    \D'(I(\phi \star G), X) \to [({\app \Phi \K})^\op, \V](\phi, \D'(IG\thg, X))
\end{equation*}
is invertible. But this is equally well the morphism~\eqref{eq:kanpropeqa},
since $\D'$ is a full subcategory
of $\C$; thus $H$ preserves $\phi \star G$ for all $\phi \in \app \Phi \K$, so that $H.\Lan_W G$ is $\Lan_W F$ as required. 
%
\end{proof}
The typical manner in which we make use of this result is as follows. Given a
small class of lex-weights $\Phi$, we determine, by some means, a property $Q$
of $\Phi$-lex-cocomplete categories which we believe to be equivalent to
$\Phi$-exactness. We then prove that a \emph{small} $\Phi$-lex-cocomplete $\C$
is $\Phi$-exact if and only if it is $Q$ using Theorem~\ref{thm:embedding}. In
light of Proposition~\ref{prop:smallsubphiexact}, we may then remove the
smallness qualification on $\C$ so long as we can show that a
$\Phi$-lex-cocomplete $\C$ is $Q$ if and only if each of its small, full,
finite-limit- and $\Phi$-lex-colimit-closed subcategory is $Q$: and this will
usually be straightforward, by virtue of the conditions which constitute $Q$
involving quantification only over small sets of data in the candidate category
$\C$.

The size constraint placed on $\Phi$ by this result is relatively harmless,
since most classes of lex-weights that we encounter in practice are in fact
small. However, this is by no means universally so---for instance, the classes
of lex-weights for small coproducts or for small unions of subobjects are not
small---and in order to deal with such cases as these, we now describe a result
allowing the size restriction on $\Phi$ to be circumvented in its turn. It will
be convenient to defer the proof of this result until we have set up the
machinery of small-exact completions; it is given as
Proposition~\ref{prop:setsingletonexact2} below.
\begin{Prop}\label{prop:setsingletonexact}
Let $\Phi$ be a class of lex-weights. A category $\C$ is $\Phi$-exact if
and only if it is $\{\phi\}$-exact for each $\phi \in \Phi$.
\end{Prop}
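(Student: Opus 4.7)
The plan is to prove both implications separately, with the backward direction carrying essentially all the content.

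\textbf{Forward direction.} Suppose $\C$ is $\Phi$-exact. For any $\phi \in \Phi$, the minimality of $\submonad{\{\phi\}}$ among full submonads containing $\phi$ furnishes an inclusion $\submonad{\{\phi\}} \hookrightarrow \submonad{\Phi}$; in particular, for each finitely complete $\D$, $\app{\{\phi\}}{\D}$ is a full, replete, finite-limit-closed subcategory of $\app{\Phi}{\D}$ through which $W_\Phi$ factors as $W_{\{\phi\}}$ followed by the inclusion. Starting from the lex left adjoint $L \dashv W_\Phi$ furnished by Proposition \ref{prop:charphilexcocomp}(7), its restriction $L'$ to $\app{\{\phi\}}{\C}$ is lex (finite limits in the subcategory agreeing with those in $\app{\Phi}{\C}$) and satisfies $L' \dashv W_{\{\phi\}}$ by a hom-set calculation exploiting full faithfulness of the inclusion. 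So $\C$ is $\{\phi\}$-exact.

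\textbf{Backward direction.} Assume $\C$ is $\{\phi\}$-exact for every $\phi \in \Phi$. I would aim to verify clause (1) of Proposition \ref{prop:charphilexcocomp}: for each lex $D \colon \K \to \C$ with $\K$ small and finitely complete, the functor $(\thg) \star D \colon \app{\Phi}{\K} \to \C$ is defined and lex. The plan is to exploit Proposition \ref{prop:phicchar}, which presents $\app{\Phi}{\K}$ as the closure of the representables in $\P\K$ under finite limits and $\phi$-lex-colimits for $\phi \in \Phi$, and define $(\thg) \star D$ by transfinite induction over this construction. At a finite-limit step, I transfer the limit from $\app{\Phi}{\K}$ to the already-defined values in $\C$, using finite completeness. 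At a $\phi$-lex-colimit step $\psi = \phi \star H$ with $H \colon \M \to \app{\Phi}{\K}$ landing in an earlier stage, I apply the $\{\phi\}$-exact structure on $\C$ to the composite $\M \xrightarrow{H} \app{\Phi}{\K} \to \C$, which both yields the required colimit in $\C$ and guarantees its preservation of finite limits in $\M$.

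\textbf{Main obstacle.} The principal difficulty is global coherence rather than stage-by-stage construction: different decompositions of a single $\psi \in \app{\Phi}{\K}$ must yield isomorphic values $\psi \star D$, and lex-ness must hold for finite limits mixing objects arising at different stages. A cleaner, more conceptual route---presumably what Proposition \ref{prop:setsingletonexact2} executes once the small-exact-completion machinery of Section \ref{sec:relative} is available---is to use the embedding characterisation. For small $\C$, each $\{\phi\}$-exact structure supplies by Theorem \ref{thm:embedding} a full $\{\phi\}$-lex-cocontinuous embedding into a $\V$-topos; one assembles these into a single full $\Phi$-lex-cocontinuous embedding into a $\V$-topos (constructed as the small-exact completion $\appr{\P}{\Phi}{\C}$), whence $\Phi$-exactness follows by Theorem \ref{thm:embedding}(1)$\Rightarrow$(4). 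Since $\Phi$ need not be small, the passage from small to arbitrary $\C$ cannot go through Proposition \ref{prop:smallsubphiexact}; it must instead be handled intrinsically by the small-exact-completion construction, which I expect to be the most delicate point of the argument.
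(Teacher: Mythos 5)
Your forward direction is correct and is essentially the paper's, which simply cites Proposition~\ref{prop:charorderrelation}; the content of that citation is exactly your restriction-of-the-reflector argument. The gap is in the backward direction. You rightly discard the transfinite-induction route, and you rightly guess that the real proof goes through the embedding theorem and the $\P_\Phi$ machinery of Section~\ref{sec:relative}; but the step you describe as ``one assembles these into a single full $\Phi$-lex-cocontinuous embedding into a $\V$-topos'' is where the entire content of the proposition lives, and your sketch gives no indication of how the assembly is to be performed. What the paper actually does is this: for each $\phi \in \Phi$, the $\{\phi\}$-exactness of the (small) category in question makes $\P_{\{\phi\}}\C$ lex-reflective in $[\C^\op, \V]$, by Proposition~\ref{prop:PPhilexreflective} applied to the small class $\{\phi\}$; the desired $\V$-topos is then the \emph{intersection} $\bigcap_{\phi \in \Phi} \P_{\{\phi\}}\C$, into which the restricted Yoneda embedding is full, lex, and sends every $\{\phi\}$-lex-colimit to a limit, hence is $\Phi$-lex-cocontinuous, so that Theorem~\ref{thm:embedding} applies. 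The non-trivial point --- the one missing from your proposal --- is that this intersection, indexed by a possibly \emph{large} class $\Phi$, is again lex-reflective. That is precisely what the Appendix result Proposition~\ref{prop:lexrefl} is for: localisations of a locally presentable category form a \emph{small} complete lattice with infima given by intersection, so an apparently large intersection of them is in fact a small one and again a localisation. Note also that you cannot shortcut this by invoking Proposition~\ref{prop:PPhilexreflective} for the class $\Phi$ itself, since its hypothesis is that $\C$ is already $\Phi$-exact --- which is what is to be proved.

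You have also slightly misplaced the size difficulties. The largeness of $\Phi$ already bites for small $\C$, through the large intersection just described, and it is there that it must be dealt with. By contrast, the reduction from an arbitrary $\C$ to small subcategories is carried out in the paper exactly via Proposition~\ref{prop:smallsubphiexact}, applied to the small, full, finite-limit- and $\Phi$-lex-colimit-closed subcategories $\D \subseteq \C$, each of which inherits $\{\phi\}$-exactness from $\C$; it is not ``handled intrinsically by the small-exact-completion construction'', and indeed the small-exact completion of a large category is never constructed in the paper. Your observation that the literal statement of Proposition~\ref{prop:smallsubphiexact} assumes $\Phi$ small is a fair one, but the remedy is not the one you propose.
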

Assembling the above results, we obtain an embedding theorem for $\Phi$-exact
categories that is subject to no smallness constraints whatsoever.
\begin{Cor}
Let $\Phi$ be a class of lex-weights. A $\Phi$-lex-cocomplete
category $\C$ is $\Phi$-exact if and only if, for each $\phi \in \Phi$,
every small, full, finite-limit- and $\{\phi\}$-lex-colimit-closed subcategory
$\D \subseteq \C$ admits a full $\{\phi\}$-lex-cocontinuous embedding in a
$\V$-topos.
\end{Cor}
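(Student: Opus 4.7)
The plan is to chain together three results already in place: Proposition~\ref{prop:setsingletonexact}, Proposition~\ref{prop:smallsubphiexact}, and Theorem~\ref{thm:embedding}. The difficulty with each individually is that Theorem~\ref{thm:embedding} only gives the full equivalence (1) $\Leftrightarrow$ (4) for small $\C$, while Proposition~\ref{prop:smallsubphiexact} only applies when $\Phi$ itself is small. The point is that a singleton class $\{\phi\}$ is trivially small, so once Proposition~\ref{prop:setsingletonexact} has let us reduce $\Phi$-exactness to $\{\phi\}$-exactness for each $\phi$, both remaining size obstructions dissolve.

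For the forward direction, suppose $\C$ is $\Phi$-exact. By Proposition~\ref{prop:setsingletonexact}, $\C$ is $\{\phi\}$-exact for every $\phi \in \Phi$. Fix such a $\phi$. Since $\{\phi\}$ is a small class of lex-weights and $\C$ is $\{\phi\}$-lex-cocomplete (being $\Phi$-lex-cocomplete), Proposition~\ref{prop:smallsubphiexact} tells us that every small, full, finite-limit- and $\{\phi\}$-lex-colimit-closed subcategory $\D \subseteq \C$ is itself $\{\phi\}$-exact. Such a $\D$ is small and $\{\phi\}$-lex-cocomplete (inheriting $\{\phi\}$-lex-colimits from $\C$ by closure), so the implication (4) $\Rightarrow$ (1) of Theorem~\ref{thm:embedding} yields the desired full $\{\phi\}$-lex-cocontinuous embedding of $\D$ into a $\V$-topos.

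For the converse, assume the embedding condition. Fix $\phi \in \Phi$ and let $\D \subseteq \C$ be any small, full, finite-limit- and $\{\phi\}$-lex-colimit-closed subcategory. By hypothesis $\D$ admits a full $\{\phi\}$-lex-cocontinuous embedding into a $\V$-topos; since $\D$ is small and $\{\phi\}$-lex-cocomplete, the implication (1) $\Rightarrow$ (4) of Theorem~\ref{thm:embedding} makes $\D$ a $\{\phi\}$-exact category. Applying Proposition~\ref{prop:smallsubphiexact} to the small class $\{\phi\}$ then shows $\C$ is $\{\phi\}$-exact, and since this holds for every $\phi \in \Phi$, Proposition~\ref{prop:setsingletonexact} concludes that $\C$ is $\Phi$-exact.

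The only subtlety worth flagging is the verification that the subcategories $\D$ actually satisfy the hypothesis of Theorem~\ref{thm:embedding}, namely that they are $\{\phi\}$-lex-cocomplete; but this is automatic from their closure in $\C$ under finite limits and $\{\phi\}$-lex-colimits. No essentially new argument is required: the corollary is a formal consequence of cascading the three reductions in the right order.
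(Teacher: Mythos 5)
Your proof is correct and is exactly the paper's argument: the paper's own proof is the one-line instruction to combine Theorem~\ref{thm:embedding} with Propositions~\ref{prop:smallsubphiexact} and~\ref{prop:setsingletonexact}, and you have simply carried out that combination in the right order, correctly observing that passing to singleton classes $\{\phi\}$ removes the smallness hypothesis on the class of weights and that passing to small closed subcategories removes the smallness hypothesis on the category. Nothing further is needed.
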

\begin{proof}
Combine Theorem~\ref{thm:embedding} with
Propositions~\ref{prop:smallsubphiexact} and~\ref{prop:setsingletonexact}.
\end{proof}
\begin{Rk}\label{rk:artificial}This result characterises the $\Phi$-exact categories as
being $\Phi$-lex-cocomplete categories verifying certain additional conditions;
which is by contrast to Proposition~\ref{prop:charphilexcocomp}, which
characterised them as $\sat \Phi$-lex-cocomplete categories verifying certain
additional conditions. This may seem at odds with the remarks made following
Proposition~\ref{prop:charphicocomp}, where we observed that $\sat
\Phi$-lex-colimits need not always be constructible from $\Phi$-lex-colimits.
However, it turns out that in a $\Phi$-exact $\C$, all $\sat \Phi$-lex-colimits
may in fact be constructed from $\Phi$-lex-colimits \emph{together with finite
limits}. This is possible because the additional conditions verified in a
$\Phi$-exact category force certain cocones under $\sat \Phi$-lex-diagrams,
always constructible from $\Phi$-lex-colimits and finite limits, to be
colimiting ones.
\end{Rk}

\section{Examples of $\Phi$-exactness}\label{sec:3}
We now describe in detail some particular notions of $\Phi$-exactness. As we
have already said, we restrict attention in this article to the unenriched
case---that is, the case $\V = \cat{Set}$ of our general notions---reserving
for future study the consideration of exactness notions over other bases. Thus,
throughout this section and the next, we assume without further comment that
$\V = \cat{Set}$; so ``category'' now means ``locally small category'' and so
on. The examples of this section will show---as anticipated in the
Introduction---that in this setting, and for suitable choices of $\Phi$, a
category is $\Phi$-exact just when it is regular, or Barr-exact, or lextensive,
or coherent, or adhesive. We also provide three further examples fitting into our
framework. The first is the notion of category with stable and effective finite
unions of subobjects (effectivity meaning that unions are calculated as a
pushout over the pairwise intersections); the second and third are the appropriate notions
of exactness for categories with filtered colimits, and for categories with reflexive coequalisers.
\subsection{Regular categories}
For our first example, let the class $\Phi_\mathrm{reg}$ be given by the single
functor $\phi \colon \K^\op \to \cat{Set}$, where $\K$ is the free category
with finite limits generated by an arrow $f \colon X \to Y$ and where $\phi$ is
the coequaliser in $[\K^\op, \cat{Set}]$ of the kernel-pair of $\K(\thg, f)
\colon \K(\thg, X) \to \K(\thg, Y)$; note that $\phi$ is equally well the image
of $\K(\thg, f)$. If $(s,t) \colon R \rightrightarrows X$ is the
kernel-pair of $f$ in $\K$, then, since the Yoneda embedding preserves limits,
$\phi$ is equally well a coequaliser of $\K(\thg, s)$ and $\K(\thg, t)$ in
$[\K^\op, \cat{Set}]$. Now suppose given a finitely complete $\C$ and a lex functor $D
\colon \K \to \C$. Since colimits by a representable weight are given by evaluation at the representing object, and since the weighted colimit functor is cocontinuous in its first argument, insofar as it is defined, the colimit $\phi \star D$, if it exists, must be a
coequaliser of the pair $(Ds, Dt) \colon DR \rightrightarrows DX$. But as
$D$ preserves finite limits, this pair is a kernel-pair of $Df$, whence $\phi
\star D$ must be the coequaliser of the kernel-pair of $Df$. Thus if $\C$ admits
coequalisers of kernel-pairs, it is $\Phi_\mathrm{reg}$-lex-cocomplete;
conversely, if $\C$ is $\Phi_\mathrm{reg}$-lex-cocomplete, then it admits
coequalisers of kernel-pairs, since for any $h \colon U \to V$ in $\C$ there is
some lex $D \colon \K \to \C$ with $Df = h$.

Now by Theorem~\ref{thm:embedding}, a small, finitely complete and
$\Phi_\mathrm{reg}$-lex-cocomplete $\C$ is $\Phi_\mathrm{reg}$-exact just when
it admits a full embedding into a Grothendieck topos preserving finite limits
and coequalisers of kernel-pairs; equivalently, finite limits and regular
epimorphisms.
Such an embedding, being fully faithful, will reflect as well as preserve
regular epimorphisms, and since regular epimorphisms in a Grothendieck topos
are stable under pullback, it follows that the same is true in any small
$\Phi_\mathrm{reg}$-exact category: which is to say that any such category is
regular. Conversely, if $\C$ is small and regular, then we may consider the
topos $\cat{Sh}(\C)$ of sheaves on $\C$ for the \emph{regular topology}, in
which a sieve is covering just when it contains some regular epimorphism.
By~\cite[Proposition 4.3]{Barr1971Exact}, the canonical functor $\C \to
\cat{Sh}(\C)$ is fully faithful, and preserves both finite limits and regular
epimorphisms; whence $\C$ is $\Phi_\mathrm{reg}$-exact. Thus the small,
finitely complete $\C$ is $\Phi_\mathrm{reg}$-exact if and only if regular; and
since clearly a category with finite limits and coequalisers of kernel-pairs is
regular if and only if every small, full subcategory closed under finite limits
and coequalisers of kernel-pairs is regular, we conclude from
Proposition~\ref{prop:smallsubphiexact} that a finitely complete $\C$, of any
size, is $\Phi_\mathrm{reg}$-exact if and only if it is regular.

\subsection{Barr-exact categories}\label{subsec:barrexact}
Consider now the class of lex-weights $\Phi_\mathrm{ex}$ consisting of the
single functor $\phi \colon \K^\op \to \cat{Set}$, where $\K$ is the free
category with finite limits generated by an equivalence relation $(s,t) \colon
R \rightarrowtail X \times X$ and where $\phi$ is the coequaliser in $[\K^\op,
\cat{Set}]$ of $\K(\thg, s)$ and $\K(\thg, t)$. Arguing as before, we see that
the finitely complete $\C$ is $\Phi_\mathrm{ex}$-lex-cocomplete if and only if
it admits coequalisers of equivalence relations. Now by
Theorem~\ref{thm:embedding} such a $\C$, if small, is $\Phi_\mathrm{ex}$-exact
just when it admits a fully faithful functor $J \colon \C \to \E$ into a
Grothendieck topos which preserves finite limits and coequalisers of
equivalence relations. Since any kernel-pair is an equivalence relation, such a
$J$ in particular preserves and reflects regular epimorphisms, and so any small
$\Phi_\mathrm{ex}$-exact category is regular. If moreover $(s,t) \colon R
\rightarrowtail X \times X$ is an equivalence relation in $\C$, then by virtue
of $J$'s preserving coequalisers of equivalence relations, and reflecting kernel-pairs, we conclude that
$(s,t)$ is the kernel-pair of its coequaliser, since $(Js,Jt)$ is so in the
topos $\E$. Thus any small $\Phi_\mathrm{ex}$-exact category is Barr-exact.
Conversely, if the small, finitely complete $\C$ is Barr-exact, then the
embedding $\C \to \cat{Sh}(\C)$---where $\C$ is again equipped with the regular
topology---preserves not only regular epimorphisms but also coequalisers of
equivalence relations, since every equivalence relation in $\C$ and in
$\cat{Sh}(\C)$ is the kernel-pair of its own coequaliser. Thus the small
finitely complete $\C$ is $\Phi_\mathrm{ex}$-exact if and only if Barr-exact;
and so appealing to Proposition~\ref{prop:smallsubphiexact} and arguing as
before, we conclude that the $\Phi_\mathrm{ex}$-exact categories of any size
are precisely the finitely complete Barr-exact categories. It follows from this
that if $\C$ is finitely complete, then $W \colon \C \to \Phi_\mathrm{ex} \C$
is what is usually referred to as the ex/lex completion of $\C$, as described
explicitly in~\cite{Carboni1995Some}. The fact that $\C$ is itself Barr-exact
just when $W$ admits a left exact left adjoint---which is immediate from our
Proposition~\ref{prop:charphilexcocomp}(7)---was first noted in~\cite[Lemma
2.1(iv)]{Carboni1995Some}; our theory provides a general context for this
observation.

\subsection{Lextensive categories}
Consider next the class of lex-weights $\Phi_\mathrm{lext}$ consisting of the
two functors $\phi_0 \colon \K_0^\op \to \cat{Set}$ and $\phi_2 \colon \K_2^\op
\to \cat{Set}$. Here, $\K_0$ is the terminal category, and $\phi_0$ the initial
object of $[\K_0^\op, \cat{Set}]$, whilst $\K_2$ is the free category with
finite limits on a pair of objects $X,Y$, and $\phi_2$ the coproduct
$\K_2(\thg, X) + \K_2(\thg, Y)$. Arguing as before, a finitely complete $\C$ is
$\Phi_\mathrm{lext}$-lex-cocomplete if and only if it admits finite coproducts.

In order to characterise the $\Phi_\mathrm{lext}$-exact categories, we shall
describe directly the free $\Phi_\mathrm{lext}$-exact category on a finitely
complete $\C$. Let $\cat{Fam}_f(\C)$ be the finite coproduct completion of
$\C$; its objects are finite collections $(X_i \mid i \in I)$ of objects of
$\C$ whilst its morphisms $(X_i \mid i \in I) \to (Y_j \mid j \in J)$ are pairs
of a function $f \colon I \to J$ and a family of morphisms $(g_i \colon X_i \to
Y_{f(i)} \mid i \in I)$. We have fully faithful functors $W \colon \C \to
\cat{Fam}_f(\C)$ and $J \colon \cat{Fam}_f(\C) \to \P \C$, where $W(X) = (X)$
and $J(X_i \mid i \in I) = \sum_{i} \C(\thg, X_i)$, and clearly have $Y \cong
JW$. Of course, $\cat{Fam}_f(\C)$ has finite coproducts; it is also finitely
complete, as remarked in~\cite[Lemma 4.1(ii)]{Carboni1995Some}, and both the
finite coproducts and the finite limits are easily seen to be preserved by $J$.
Moreover, every object of $\cat{Fam}_f(\C)$ is a finite coproduct of objects in
the image of $W$, and thus the replete image of $J$ in $\P \C$ is precisely
$\Phi_\mathrm{lext} \C$. It follows that a category $\C$ with finite limits and
finite coproducts is $\Phi_\mathrm{lext}$-exact just when the functor
$\cat{Fam}_f(\C) \to \C$ sending $(X_i \mid i \in I)$ to $\sum_{i \in I} X_i$
preserves finite limits; which by~\cite[Theorem 9]{Rosebrugh1986Cofibrations},
will happen just when finite coproducts in $\C$ are stable and disjoint. Thus we conclude that a
category $\C$ is $\Phi_\mathrm{lext}$-exact just when it is lextensive.

\subsection{Effective unions}
Let $\Phi_\vee$ be given by the two functors $\phi_0 \colon \K_0^\op \to
\cat{Set}$ and $\phi_2 \colon \K_2^\op \to \cat{Set}$ defined as follows.
$\K_0$ is the terminal category, and $\phi_0$ the initial object of $[\K_0^\op,
\cat{Set}]$; $\K_2$ is the free category with finite limits generated by a pair
of monomorphisms $A \rightarrowtail C \leftarrowtail B$, and $\phi_2$ is the
union in $[\K_2^\op, \cat{Set}]$ of the two subobjects $\K_2(\thg, A)$ and
$\K_2(\thg, B)$ of $\K_2(\thg, C)$. Writing $A \cap B$ for the intersection of
the subobjects $A$ and $B$ of $C$ in $\K_2$, we observe that $\phi_2$ is
equally well the pushout of the inclusions of $\K_2(\thg, A \cap B)$ into
$\K_2(\thg, A)$ and $\K_2(\thg, B)$. It follows from this that a finitely
complete category $\C$ is $\Phi_\vee$-lex-cocomplete just when it admits an
initial object and pushouts of pullbacks of pairs of monomorphisms. The
following result characterises the $\Phi_\vee$-exact categories.
\begin{Prop}\label{prop:effunions}
The following are equivalent properties of the finitely complete $\C$:
\begin{enumerate}
\item $\C$ is $\Phi_\vee$-exact;
\item $\C$ has a strict initial object, and pushouts of pullbacks of pairs
    of monomorphisms which are stable under pullback;
\item $\C$ admits finite unions of subobjects which are effective and
    stable under pullback.
\end{enumerate}
\end{Prop}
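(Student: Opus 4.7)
My plan is to prove the cyclic chain (1) $\Rightarrow$ (3) $\Rightarrow$ (2) $\Rightarrow$ (1). Since $\Phi_\vee$ is a small class of lex-weights (containing only two members), Proposition~\ref{prop:smallsubphiexact} reduces the question of $\Phi_\vee$-exactness to the case of small $\C$; both (2) and (3) are manifestly inherited by small, finite-limit- and $\Phi_\vee$-lex-colimit-closed subcategories, so the whole equivalence reduces to the case where $\C$ is small.

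For (1) $\Rightarrow$ (3), Theorem~\ref{thm:embedding} supplies a full $\Phi_\vee$-lex-cocontinuous embedding $J \colon \C \to \E$ into a Grothendieck topos. In $\E$, finite unions of subobjects exist, are effective (given by the initial object in the nullary case and by pushouts over intersections in the binary case), and are stable under pullback. Since $J$ preserves finite limits, initial objects, and pushouts of pullback pairs of monomorphisms, and is fully faithful---hence reflects both monomorphisms and isomorphisms---these three properties transfer verbatim to $\C$. For (3) $\Rightarrow$ (2), effective binary unions are by definition pushouts of pullbacks of pairs of monomorphisms, and stability of unions is stability of these pushouts; for strictness of the initial object, stability of the empty union along any $f \colon Y \to 0$ forces $Y \cong Y \times_0 0 \cong 0$.

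The substantive step is (2) $\Rightarrow$ (1). I equip the small $\C$ with the Grothendieck topology $\J$ generated by the empty sieve on the initial object $0$, together with, for each pullback pair $A \rightarrowtail C \leftarrowtail B$ of monomorphisms in $\C$, the binary sieve generated by $\{A \hookrightarrow P, B \hookrightarrow P\}$ on the pushout $P = A +_{A\cap B} B$. Stability of these pushouts under pullback---exactly the hypothesis in (2)---ensures that the generating covers are stable, so $\J$ is a genuine Grothendieck topology. Strictness of $0$ together with the defining universal property of each $P$ imply that every representable is a $\J$-sheaf, so the Yoneda embedding factors through a fully faithful $y \colon \C \to \cat{Sh}(\C, \J)$. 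This $y$ preserves finite limits as always, sends $0$ to the initial sheaf (since the empty cover forces sheaves on $0$ to be terminal), and carries each generating pushout of monomorphisms to a pushout in $\cat{Sh}(\C, \J)$, the latter because the associated covering sieve is epimorphic after sheafification. Hence $y$ is a full $\Phi_\vee$-lex-cocontinuous embedding into a Grothendieck topos, and $\C$ is $\Phi_\vee$-exact by Theorem~\ref{thm:embedding}.

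The main obstacle is verifying that the representables are $\J$-sheaves and that the generating covers do yield pushouts in $\cat{Sh}(\C, \J)$: the sheaf condition on the binary generating covers unwinds precisely to the pushout universal property of $P$, and on the empty cover of $0$ precisely to strictness, so in each case the check reduces directly to the hypotheses in (2), but the compatibility between matching families for different overlapping covers needs to be handled with some care, drawing once more on the stability clause.
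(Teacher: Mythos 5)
Your reduction to small $\C$, your (1)\,$\Rightarrow$\,(3) (which cleverly outsources the hard fact that $k \colon A +_{A\cap B} B \to C$ is monic to the topos, via full faithfulness of the embedding), and the existence half of (3)\,$\Rightarrow$\,(2) are all fine, and your (2)\,$\Rightarrow$\,(1) is a genuinely different route from the paper's: the paper first proves (2)\,$\Rightarrow$\,(3) by an elementary kernel-pair argument showing $k$ is monic, and only then builds a topology whose covers are families of \emph{subobjects}, which makes the sheaf-theoretic checks routine. You instead cover $P = A +_{A\cap B} B$ directly by $\{a \colon A \to P,\ b \colon B \to P\}$. That can be made to work, but the step where you conclude that $y$ sends the generating square to a pushout of sheaves has a real gap: the covering sieve being epimorphic after sheafification only shows that the comparison $yA +_{y(A\cap B)} yB \to yP$ is an \emph{epimorphism} in $\cat{Sh}(\C,\J)$, not an isomorphism. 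To get monicity of this comparison (equivalently, to identify the set of matching families for the sieve with $FA \times_{F(A\cap B)} FB$ for an arbitrary sheaf $F$) you must additionally know that $a$ and $b$ are monic and that $A \times_P B \cong A \cap B$. These facts are true and elementary --- $ka$ and $kb$ are the original monomorphisms into $C$, so $a$ and $b$ are monic, and the two canonical comparisons between $A \times_P B$ and $A \times_C B$ are mutually inverse --- but they are exactly the "compatibility between matching families" that you defer, and without them the argument does not close. The same facts are what make your generating families a genuine coverage (so that sheafhood for the generators implies sheafhood for the generated topology), since the pullback of a generating cover along $g \colon Q \to P$ is the generating cover attached to the pair of monomorphisms $g^\ast A, g^\ast B \rightarrowtail Q$, using stability.

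Two smaller points. First, in (3)\,$\Rightarrow$\,(2) your strictness argument is circular as written: condition (3) gives stable least subobjects, not an initial object, and "$Y \cong Y \times_0 0 \cong 0$" presupposes what is to be shown; the correct route is the standard lemma (cited by the paper as \cite[Lemma A1.4.1]{Johnstone2002Sketches}) that stable least subobjects yield a strict initial object. Second, once you have proved that $a$ and $b$ are monic and $A\times_P B \cong A\cap B$, you are most of the way to the paper's key lemma anyway (one further use of stability gives $k$ monic), so the saving over the paper's (2)\,$\Rightarrow$\,(3)\,$\Rightarrow$\,(1) is smaller than it first appears.
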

For part (3), finite unions are said to be \emph{effective} if for any $C \in
\C$ and any pair of
    subobjects $A, B$ of $C$, the pullback square
\begin{equation*}
    \cd[@!@-1.9em]{
        A \cap B \ar@{ >->}[r] \ar@{ >->}[d] & B \ar@{ >->}[d] \\
        A \ar@{ >->}[r] & A \cup B
    }
\end{equation*}
is also a pushout.\begin{proof} By appealing to
Proposition~\ref{prop:smallsubphiexact}, and arguing as before, it suffices to
prove the equivalence when $\C$ is small. So suppose first that the small $\C$
satisfies (1). By Theorem~\ref{thm:embedding} we know that $\C$ is
$\Phi_\vee$-lex-cocomplete and admits a full embedding $J \colon \C \to \E$
where $\E$ is a Grothendieck topos and $J$ preserves finite limits, the initial
object, and pushouts of pullbacks of pairs of monomorphisms. Thus $\C$'s
initial object is strict, since if $f \colon X \to 0$ in $\C$ then $Jf$ is
invertible in $\E$---as initial objects in a topos are strict---whence $f$ is
also invertible, as $J$ is conservative. A similar argument shows that pushouts
of pullbacks of monomorphisms are stable under pullback in $\C$, since they are
so in $\E$. Thus (1) $\Rightarrow$ (2).

Suppose next that $\C$ satisfies (2). Strictness of the initial object $0$
implies that the unique map $0 \to C$ is always monomorphic; whence $0
\rightarrowtail C$ is a least subobject of $C$, which by strictness is stable
under pullback. If now $A$ and $B$ are subobjects of $C$, we claim that the map
$k \colon A +_{A \cap B} B \to C$ is a monomorphism; if this is so, then $k$
represents the subobject $A \cup B$ of $C$, and such binary unions are stable
by assumption, and effective by construction. The claim is proved in
Theorem~5.1 of~\cite{Lack2005Adhesive}; we give here an alternative proof.
Observe first that whenever we pull back the diagram
\begin{equation*}
    \cd[@-0.9em@C-2em]{
        & A \cap B \ar[dl] \ar[dr] \\
        A \ar[dr] \ar@/_14pt/[ddr] & & B \ar[dl] \ar@/^14pt/[ddl] \\
        & A +_{A \cap B} B \ar[d]^k \\
        & C
    }
\end{equation*}
along a map $f \colon K \to C$, the inner square remains a pushout by
stability; so that if the outer square becomes a pushout, the induced map
$f^\ast(k)$ must be an isomorphism. In particular, this is the case when $f$ is
any of the inclusions $A, B, A \cap B \rightarrowtail C$, so that on
considering the subobjects $k^\ast(A)$, $k^\ast(B)$ and $k^\ast(A \cap B)$ of
$A +_{A \cap B} B$, we have the comparison maps $k^\ast(A) \to A$, $k^\ast(B)
\to B$ and $k^\ast(A \cap B) \to A \cap B$ invertible. But this in turn implies
that on pulling back the displayed diagram along $k$, the outer square becomes
a pushout; whence $k^\ast(k)$ is invertible, so that $k$ has trivial
kernel-pair and is thereby monomorphic. This completes the proof of the claim,
and so (2) $\Rightarrow$ (3).

Suppose now that $\C$ satisfies (3). Clearly $\C$ has pushouts of pullbacks of
monomorphisms, and a standard argument shows that it also has an
initial object---see~\cite[Lemma A1.4.1]{Johnstone2002Sketches}, for example.
Thus $\C$ is $\Phi_\vee$-lex-cocomplete; to show that it is in fact
$\Phi_\vee$-exact, we define a topology on $\C$ by declaring that $C \in \C$ is
covered by any sieve containing a finite family of subobjects $(A_i
\rightarrowtail C \mid i \in I)$ whose union is all of $C$. Stability of finite
unions ensures that this gives a topology on $\C$, whilst stability and
effectivity together ensure that this topology is subcanonical. So we have a
restricted Yoneda embedding $\C \to \cat{Sh}(\C)$ into the category of sheaves
for this topology, which is fully faithful and left exact; to complete the
proof, it is enough to show that it is also $\Phi_\vee$-lex-cocontinuous, or
equivalently, that every sheaf $F \colon \C^\op \to \cat{Set}$ is
$\Phi_\vee$-lex-continuous. Now if $F$ is such a sheaf, then certainly $F(0)
\cong 1$, since the empty family covers $0$; it remains to show that if $A
\rightarrowtail C \leftarrowtail B$ in $\C$, then the square
\begin{equation*}
\cd{    F(A +_{A \cap B} B) \ar[r] \ar[d] & FA \ar[d] \\ FB \ar[r] & F(A \cap B)}
\end{equation*}
is a pullback in $\cat{Set}$. But $A +_{A \cap B} B = A \cup B$ by assumption,
so this follows from the sheaf condition applied to the covering family $A
\rightarrowtail A \cup B \leftarrowtail B$.
\end{proof}
The class $\Phi_\vee$ admits an obvious generalisation to a class
$\Phi_{\bigvee}$ for which the $\Phi_{\bigvee}$-exact categories may be
characterised as those with effective, stable unions of small families of
subobjects. We do not take the trouble to formulate this precisely; though let
us observe that, since the class of lex-weights $\Phi_{\bigvee}$ will no longer
be small, we must make use of Proposition~\ref{prop:setsingletonexact} in
proving the characterisation.

\subsection{Coherent and geometric categories}
Consider now the class of lex-weights $\Phi_\mathrm{coh} = \Phi_\mathrm{reg}
\cup \Phi_\mathrm{\vee}$. We deduce from
Proposition~\ref{prop:setsingletonexact} that a finitely complete category is
$\Phi_\mathrm{coh}$-exact just when it is both $\Phi_\mathrm{reg}$-exact and
$\Phi_\mathrm{\vee}$-exact; that is, just when it is regular and admits stable
effective finite unions of subobjects. In fact, if a regular category admits
stable finite unions (and recall that such a category is called
\emph{coherent}), they are necessarily effective: see, for
example~\cite[Proposition A1.4.3]{Johnstone2002Sketches}. Hence a finitely
complete category is $\Phi_\mathrm{coh}$-exact just when it is a coherent
category.

On the other hand, we may consider the class of lex-weights
$\Phi'_\mathrm{coh}$ comprising the two functors $\phi_0 \colon \K_0^\op \to
\cat{Set}$ and $\phi_2 \colon \K_2^\op \to \cat{Set}$ defined as follows.
$\K_0$ is the terminal category, and $\phi_0$ the initial object of $[\K_0^\op,
\cat{Set}]$; $\K_2$ is the free category with finite limits generated by a pair
of arrows $A \to C \leftarrow B$, and $\phi_2$ is the image in $[\K_2^\op,
\cat{Set}]$ of the copairing $\K_2(\thg, A) + \K_2(\thg, B) \to \K_2(\thg, C)$.
By an argument similar to the preceding ones, a category $\C$ is
$\Phi'_\mathrm{coh}$-lex-cocomplete just when it has an initial object, and for
every cospan $f \colon A \to C \leftarrow B \colon g$ in $\C$, the diagram
\begin{equation*}
    \cd[@C-1em@-0.5em]{
      A \times_C A \ar@<-3pt>[dr] \ar@<3pt>[dr] & & A \times_C B \ar[dl] \ar[dr] & & B \times_C B \ar@<-3pt>[dl] \ar@<3pt>[dl] \\
      & A & & B
    }
\end{equation*}
---which for the purposes of this example we will call the \emph{double kernel}
of $(f,g)$---admits a colimit. In particular, such a $\C$ admits pushouts of
pullbacks of pairs of monomorphisms (take $f$ and $g$ monomorphic), and
coequalisers of kernel-pairs (take $f = g$), so that by
Proposition~\ref{prop:phicchar}, $\Phi_\mathrm{coh} \C \subseteq
\Phi'_\mathrm{coh} \C$ for every finitely complete $\C$, whence any
$\Phi'_\mathrm{coh}$-exact category is $\Phi_\mathrm{coh}$-exact and so
coherent. We claim conversely that every coherent category $\C$ is
$\Phi'_\mathrm{coh}$-exact. By Proposition~\ref{prop:smallsubphiexact}, it
suffices to consider the case of a small $\C$. We may equip such a $\C$ with
the \emph{coherent topology}, in which a sieve is covering just when it
contains a finite family of morphisms $(A_i \to X \mid i \in I)$ whose images
have as union the whole of $X$. This topology is subcanonical, and so we have a
fully faithful and left exact embedding $J \colon \C \to \cat{Sh}(\C)$. Since
the empty family covers $0$, this embedding preserves the initial object; we
must show it also preserves colimits of double kernels. So let $f \colon A \to
C \leftarrow B \colon g$ in $\C$, and define $Z
\defeq \mathrm{im} f \cup \mathrm{im} g \rightarrowtail C$. Now the induced maps
$f' \colon A \to Z \leftarrow B \colon g'$ exhibit $Z$ as the colimit of the
double kernel of $(f,g)$; but since $Z$ is a subobject of $C$, this double
kernel is equally that of $(f',g')$, whose colimit $J$ preserves since the pair
$(f', g')$ covers $Z$. Thus $J$ preserves $\Phi'_\mathrm{coh}$-lex-colimits;
and so every coherent category is $\Phi'_\mathrm{coh}$-exact as claimed.

In an analogous way, we may also formulate classes $\Phi_\mathrm{geom} =
\Phi_\mathrm{reg} \cup \Phi_{\bigvee}$ and $\Phi'_\mathrm{geom}$ such that a
category is $\Phi_\mathrm{geom}$-exact if and only if it is
$\Phi'_\mathrm{geom}$-exact, if and only if it is a \emph{geometric}
category---that is, a regular category with pullback-stable small unions of
subobjects.

\subsection{Adhesive categories}\label{subsec:adhesive}
Consider the class of lex-weights $\Phi_\mathrm{adh}$ comprising the single
functor $\phi \colon \K^\op \to \cat{Set}$, where $\K$ is the free category
with finite limits generated by a span $m \colon A \leftarrowtail C \to B
\colon f$ with $m$ monomorphic, and where $\phi$ is the pushout in $[\K^\op,
\cat{Set}]$ of $\K(\thg, m)$ and $\K(\thg, f)$. Now a finitely complete
category $\C$ is $\Phi_\mathrm{adh}$-lex-cocomplete just when it admits
pushouts along monomorphisms. Recall from~\cite{Lack2005Adhesive} that we call
such a category \emph{adhesive} when for any commutative cube
\begin{equation*}
    \cd[@-1.3em]{
        \cdot \ar[rr] \ar[dr] \ar[dd] & & \cdot \ar[dd]|\hole \ar[dr]\\
        & \cdot \ar[rr] \ar[dd] & & \cdot \ar[dd] \\
        \cdot \ar[rr]|\hole \ar[dr] & & \cdot \ar[dr]\\
        & \cdot \ar[rr] & & \cdot
    }
\end{equation*}
whose bottom face is a pushout and whose rear faces are pullbacks, the top face
is a pushout if and only if the front faces are pullbacks. This condition
implies, in particular, that pushouts along monomorphisms are stable under
pullback, and that every such pushout square is a pullback. In fact, these
consequences of adhesivity turn out to be equivalent to it: a direct proof is
 given in~\cite{GarnerOn-the-axioms}, but the result may also be
deduced from our general theory.
\begin{Prop}\label{prop:adhchar}
The following are equivalent properties of the finitely complete $\C$:
\begin{enumerate}
\item $\C$ is $\Phi_\mathrm{adh}$-exact;
\item $\C$ is adhesive;
\item $\C$ admits pushouts along monomorphisms which are stable under
    pullback; moreover, every such pushout square is a pullback.
\end{enumerate}
\end{Prop}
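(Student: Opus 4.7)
The plan is to mirror the structure of the proof of Proposition~\ref{prop:effunions}. First, by Proposition~\ref{prop:smallsubphiexact} it suffices to handle the case of small $\C$: each of (2) and (3) is a condition that is evidently inherited by, and reflected along, small $\Phi_\mathrm{adh}$-lex-colimit-closed subcategories, so the reduction is safe. I would then prove the cycle $(1) \Rightarrow (2) \Rightarrow (3) \Rightarrow (1)$.

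For $(1) \Rightarrow (2)$, I would invoke Theorem~\ref{thm:embedding} to obtain a fully faithful, $\Phi_\mathrm{adh}$-lex-cocontinuous embedding $J \colon \C \to \E$ into a Grothendieck topos $\E$. Since every topos is adhesive, the cube condition transports back to $\C$: given any commutative cube in $\C$ with pushout-along-mono bottom face and pullback rear faces, applying $J$ preserves all of this data, using that the top leg $A' \to B'$ is a monomorphism because it is a pullback of the mono $A \to B$, and hence that $J$ preserves the pushout $B' +_{A'} C'$ formed in $\C$. Adhesivity in $\E$ then gives the ``iff'' in $\E$, and this descends to $\C$ because $J$ reflects finite limits, and reflects whether the canonical comparison $B' +_{A'} C' \to D'$ in $\C$ (whose existence is assured by $\Phi_\mathrm{adh}$-lex-cocompleteness) is invertible. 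The implication $(2) \Rightarrow (3)$ is the standard one: suitable specialisations of the cube axiom yield both stability of pushouts along monos under pullback and the fact that every such pushout square is simultaneously a pullback, as detailed in~\cite{Lack2005Adhesive}.

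For $(3) \Rightarrow (1)$, I would equip $\C$ with the \emph{adhesive topology}, whose covering sieves on $D$ are those containing the two legs $B \to D \leftarrow C$ of some pushout $D = B +_A C$ along a monomorphism $A \rightarrowtail B$. Stability of such pushouts under pullback ensures that this is a Grothendieck topology; stability and effectivity together ensure it is subcanonical, since the sheaf condition for the primary pushout cover then reduces to the universal property of the pushout, with the pullback $B \times_D C$ identified as $A$ by effectivity. Granted this, the Yoneda embedding $\C \to \cat{Sh}(\C)$ is fully faithful and left exact, and by construction carries each pushout-along-mono cocone to a colimiting one on representables, so is $\Phi_\mathrm{adh}$-lex-cocontinuous; Theorem~\ref{thm:embedding} then delivers $\Phi_\mathrm{adh}$-exactness.

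The main obstacle is the subcanonicality verification in this final implication: while the ``primary'' sheaf condition for the cover $\{B \to D, C \to D\}$ is essentially the pushout's universal property once $B \times_D C$ is identified with $A$, one must also match the compatibility conditions on the whole generated sieve, where the ``double intersections'' $B \times_D B$ and $C \times_D C$ intervene. These collapse on using the standard fact (a consequence of (3)) that the pushout leg $C \to D$ of the mono $m$ is itself a mono, so that the extra constraints become trivial and the sheaf condition matches the universal property exactly.
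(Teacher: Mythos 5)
Your reduction to the small case and your arguments for $(1) \Rightarrow (2)$ and $(2) \Rightarrow (3)$ track the paper's proof. The divergence is at $(3) \Rightarrow (1)$: the paper handles this by observing that the proof of Theorem~3.3 of~\cite{Lack2011An-embedding} uses only the hypotheses of (3) together with stability of monomorphisms under pushout (which it then deduces from (3)), whereas you try to carry out the sheaf-theoretic construction directly. The construction is the right one, but your dismissal of the ``double intersections'' is where the argument breaks. Write the pushout as $m \colon A \rightarrowtail B$, $f \colon A \to C$, with legs $u \colon B \to D$ and $v \colon C \to D$. That $v$, the pushout of the mono $m$, is itself monic disposes only of the constraint on $C \times_D C$. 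The other leg $u$ is a pushout of the \emph{arbitrary} map $f$ and is not monic in general, so $B \times_D B$ is strictly larger than $B$, and the matching-family condition on $F(B \times_D B)$ is a genuine extra constraint not implied by $F(m)(x_B) = F(f)(x_C)$. This does not hurt subcanonicality---a matching family satisfies \emph{more} conditions than the pushout's universal property needs, so amalgamation still follows from the pushout property and uniqueness from joint epimorphy---but it does break the final step: what the covering condition gives you for free is that $JB \sqcup JC \to JD$ is epimorphic in $\cat{Sh}(\C)$, so the comparison $JB +_{JA} JC \to JD$ is epi; it is monic only if the kernel pair of $u$ is suitably generated, which is exactly the content of the constraint you discarded.

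Concretely, what is needed is that $B \times_D B$ be the (stable) pushout of the diagonal copy of $B$ and $A \times_C A$ over $A$---this is the square~\eqref{eq:kerneldiag} of Section~\ref{sec:4} (with the roles of the letters permuted), i.e.\ clause (iv) of the paper's postulatedness analysis of $\Phi_\mathrm{adh}$-exactness. It is a true but non-obvious consequence of (3); the paper invokes Lemma~3.2 of~\cite{Lack2011An-embedding} for it. Your proof must either prove this fact or cite it at the point where you assert that ``the extra constraints become trivial''; as written, the argument only produces a full lex embedding carrying each pushout cocone to a jointly epimorphic family, not to a pushout.
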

\begin{proof}
By Proposition~\ref{prop:smallsubphiexact} we may assume, as before, that $\C$
is small. Now if $\C$ is $\Phi_\mathrm{adh}$-exact then by
Theorem~\ref{thm:embedding} it admits a full embedding into a Grothendieck
topos which preserves finite limits and pushouts along monomorphisms. Since
such an embedding also reflects finite limits, and since every Grothendieck
topos is adhesive, either by~\cite{Lack2006Toposes} or by a simple direct
argument, it follows that $\C$ is adhesive; and so (1) $\Rightarrow$ (2). On
the other hand, Theorem~3.3 of~\cite{Lack2011An-embedding} shows that every small
adhesive category admits a full embedding into a Grothendieck topos which
preserves finite limits and pushouts along monomorphisms; so by
Theorem~\ref{thm:embedding}, we have (2) $\Rightarrow$ (1). Next, if $\C$ is
adhesive, then pushouts along monomorphisms are certainly stable under
pullback, as this is one half of the defining property of adhesivity. Moreover,
every such pushout square is a pullback by~\cite[Lemma 4.3]{Lack2005Adhesive}:
and thus (2) $\Rightarrow$ (3).

To complete the proof, it remains  to show either (3) $\Rightarrow$ (2) or (3)
$\Rightarrow$ (1). As mentioned above, it turns out that there is a direct,
elementary argument for the first of these, which is given
in~\cite{GarnerOn-the-axioms}. But we do not need it here; for a close examination
of the proof of (2)~$\Rightarrow$~(1) provided by~\cite{Lack2011An-embedding}
reveals that it is actually a proof of (3) $\Rightarrow$ (1). It requires no
more than that pushouts along monomorphisms are stable under pullback, that
such pushouts are also pullbacks, and that monomorphisms are stable under
pushout. We have assumed all of these in (3) except the last; but this follows
on observing that, if
\begin{equation*}
    \cd{
    C \ar[r]^f \ar[d]_m & B \ar[d]^n \\ A \ar[r]_g & D
    }
\end{equation*}
is a pushout with $m$ monomorphic, then it is a pullback by assumption, so that
on pulling back the whole square along $n$, its left edge becomes invertible.
Since the resultant square is again a pushout, its right edge must also be
invertible, which is to say that $n$ has trivial kernel-pair and so is
monomorphic.
\end{proof}

\subsection{Filtered colimits}
For our next example, we let $\Phi_\mathrm{filt}$ be the class of lex-weights $\{\,\phi_\K \colon \F(\K)^\op \to \cat{Set} \mid\text{$\K$ a small filtered category}\,\}$; here, $\F(\K)$ is a free completion of $\K$ under finite limits---with unit $E \colon \K \to \F(\K)$, say---and the presheaf $\phi_\K$ is the left Kan extension along $E^\op$ of the terminal object of $[\K^\op, \cat{Set}]$. Now arguments like those of the preceding sections show that a finitely complete category $\C$ is $\Phi_\mathrm{filt}$-lex-cocomplete just when it is filtered-cocomplete. We claim moreover that:
\begin{Prop}
A finitely complete category $\C$ is $\Phi_\mathrm{filt}$-exact just when it admits filtered colimits and these commute with finite limits.
\end{Prop}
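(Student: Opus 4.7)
The plan is to identify $\app{\Phi_\mathrm{filt}}{\C}$ explicitly and then invoke Proposition~\ref{prop:charphilexcocomp}(3). First, by the universal property of the free lex completion $E \colon \K \to \F(\K)$, a lex $D \colon \F(\K) \to \C$ corresponds to an arbitrary functor $D' \defeq DE \colon \K \to \C$, and using cocontinuity of the weighted colimit in its first argument,
\[
\phi_\K \star D = (\Lan_{E^\op} 1) \star D \cong 1 \star D' = \colim D',
\]
so $\Phi_\mathrm{filt}$-lex-cocompleteness coincides with filtered cocompleteness, and a similar computation shows that $\sat{\Phi_\mathrm{filt}}$-lex-cocompleteness reduces to the same condition.

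Next I claim that for any finitely complete $\C$, the category $\app{\Phi_\mathrm{filt}}{\C}$ is precisely the full subcategory $\mathrm{Ind}(\C) \subseteq \P \C$ obtained by closing the representables under filtered colimits. By Proposition~\ref{prop:phicchar}, $\app{\Phi_\mathrm{filt}}{\C}$ is the smallest full replete subcategory of $\P \C$ containing the representables and closed under finite limits and filtered colimits, so it suffices to check that $\mathrm{Ind}(\C)$ itself satisfies these three closure properties. Containment of representables and closure under filtered colimits hold by definition; the crucial point is closure under finite limits, which follows from the pointwise computation of finite limits in $\P \C$, the commutation of finite limits with filtered colimits in $\cat{Set}$, and the fact that $\C$ being finitely complete makes finite limits of representables representable. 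Explicitly, for $F = \colim_{i \in I} \C(\thg, X_i)$ and $G = \colim_{j \in J} \C(\thg, Y_j)$ in $\mathrm{Ind}(\C)$, one has $F \times G \cong \colim_{(i,j) \in I \times J} \C(\thg, X_i \times Y_j)$, and the analogous computations handle the terminal object, pullbacks and equalisers. The reverse inclusion $\mathrm{Ind}(\C) \subseteq \app{\Phi_\mathrm{filt}}{\C}$ is immediate.

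Granted these identifications, by Proposition~\ref{prop:charphilexcocomp}(3) the finitely complete, filtered-cocomplete $\C$ is $\Phi_\mathrm{filt}$-exact if and only if the functor $(\thg) \star 1_\C \colon \mathrm{Ind}(\C) \to \C$ is left exact. This functor sends $F = \colim_i \C(\thg, X_i) \in \mathrm{Ind}(\C)$ to the actual colimit $\colim_i X_i \in \C$---it is the canonical ``evaluate the filtered colimit'' functor. Preservation of the terminal object is trivial, while preservation of binary products amounts to
\[
(F \times G) \star 1_\C = \colim_{(i,j) \in I \times J} (X_i \times Y_j) = (\colim_i X_i) \times (\colim_j Y_j),
\]
which is precisely the commutation of filtered colimits with binary products in $\C$; the analogous analysis for pullbacks and equalisers then yields commutation with all finite limits. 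Thus $(\thg) \star 1_\C$ is lex if and only if filtered colimits commute with finite limits in $\C$, completing the argument.

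The main technical point is the identification $\app{\Phi_\mathrm{filt}}{\C} = \mathrm{Ind}(\C)$, which hinges on $\C$ being finitely complete so that finite limits of representables in $\P \C$ are themselves representable, allowing the pointwise finite-limit computation to be rewritten as a filtered colimit of representables. Once this is established---uniformly in the size of $\C$---everything else is a direct unpacking of Proposition~\ref{prop:charphilexcocomp}(3), with no appeal needed to Theorem~\ref{thm:embedding} or Proposition~\ref{prop:smallsubphiexact}.
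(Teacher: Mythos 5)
Your proof is correct, but it takes a genuinely different route from the paper. The paper follows its general methodology for the examples: it first reduces to small subclasses $\Phi_\kappa$ via Proposition~\ref{prop:setsingletonexact} (since $\Phi_\mathrm{filt}$ is not a small class of weights), then to small $\C$ via Proposition~\ref{prop:smallsubphiexact}, and then applies Theorem~\ref{thm:embedding}: in the nontrivial direction it builds the subcanonical topology whose covers are the injections into $\kappa$-small filtered colimits and carries out a fairly involved matching-families argument (using the cospan category $\K'$ and finality of $V \colon \K' \to \K$) to show that the sheaf embedding preserves these colimits. You instead identify the completion directly, showing $\app{\Phi_\mathrm{filt}}{\C} \simeq \mathrm{Ind}(\C)$ via Proposition~\ref{prop:phicchar}, and then read off $\Phi_\mathrm{filt}$-exactness from clause (3) of Proposition~\ref{prop:charphilexcocomp} as lexness of the colimit functor $\mathrm{Ind}(\C) \to \C$. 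This buys you two things the paper's argument does not: you bypass the embedding theorem entirely, and you avoid all the size reductions, since your identification and the lexness check are uniform in $\C$ and in the (large) class $\Phi_\mathrm{filt}$. What the paper's route buys is uniformity of method across all the examples of Section~\ref{sec:3}, plus a topology of independent interest. The one place where your write-up is breezier than it should be is the phrase ``the analogous computations handle \dots pullbacks and equalisers'': unlike binary products, where the index category $I \times J$ does the job, closure of $\mathrm{Ind}(\C)$ under equalisers (and the converse direction, that lexness of $(\thg) \star 1_\C$ on \emph{arbitrary} finite diagrams in $\mathrm{Ind}(\C)$ follows from commutation in $\C$) requires the standard reindexing lemma that any finite diagram in $\mathrm{Ind}(\C)$ is a filtered colimit of finite diagrams in $\C$. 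That lemma is classical and true, so this is a gap in exposition rather than in substance, but it is the load-bearing technical input of your approach and deserves to be cited or proved.
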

\begin{proof}
We argue as in the previous examples; however, a little extra care is needed since the class of lex-weights $\Phi_\mathrm{filt}$ is not small. For every regular cardinal $\kappa$, let $\Phi_\kappa \subset \Phi_\mathrm{filt}$ denote the class $\{\,\phi_\K \colon \F(\K)^\op \to \cat{Set} \mid \text{$\K$ is $\kappa$-small and filtered}\,\}$. Clearly $\Phi_\mathrm{filt} = \bigcup_\kappa \Phi_\kappa$ and so by Proposition~\ref{prop:setsingletonexact}, a finitely complete and filtered-cocomplete $\C$ is $\Phi_\mathrm{filt}$-exact just when it is $\Phi_\kappa$-exact for each $\kappa$. Moreover, finite limits commute with filtered colimits in $\C$ just when finite limits commute with $\kappa$-small filtered colimits for each regular $\kappa$, and so to complete the proof, it will suffice to show that for each $\kappa$, a category $\C$ with finite limits and $\kappa$-small filtered colimits is $\Phi_\kappa$-exact just when these limits and colimits commute with each other. In fact, since the class $\Phi_\kappa$ is small, it will suffice by Proposition~\ref{prop:smallsubphiexact} to do this  only for the case of a small $\C$.

Now by Theorem~\ref{thm:embedding}, the small $\C$ with finite limits and $\kappa$-small filtered colimits is $\Phi_\kappa$-exact just when it admits a full embedding into a Grothendieck topos $\E$ preserving these limits and colimits. If $\C$ admits such an embedding, then finite limits will commute with $\kappa$-small filtered colimits in it, since they do so in $\E$. Conversely, suppose that finite limits commute with $\kappa$-small filtered colimits in $\C$; then $\kappa$-small filtered colimits are in particular stable under pullback, and so we obtain a subcanonical topology on $\C$ by declaring that the injections into every $\kappa$-small filtered colimit should be a covering family. Let $\cat{Sh}(\C)$ be the category of sheaves for this topology and $J \colon \C \to \cat{Sh}(\C)$ the restricted Yoneda embedding; clearly $J$ preserves finite limits, and we will be done if we can show that it also preserves $\kappa$-small filtered colimits.

Now if $(p_k \colon Dk \to X \mid k \in \K)$ is a $\kappa$-small filtered colimit in $\C$, then $J$ will preserve it just when every sheaf $F \colon \C^\op \to \cat{Set}$  sends it to a limit in $\cat{Set}$. So let $F \colon \C^\op \to \cat{Set}$ be a sheaf; since the family $(p_k \mid k \in \K)$ is covering, we may identify $FX$ with the set of matching families for this covering. In other words, if
\begin{equation*}
\cd[@-0.5em]{
  D_{jk} \ar[r]^-{d_{jk}} \ar[d]_{c_{jk}} & Dj \ar[d]^{p_j} \\
  Dk \ar[r]_{p_k} & X
}  
\end{equation*}
is a pullback for each $j, k \in \K$, then we may identify $FX$ with the set
\begin{equation}\label{eq:theset}\tag{$\ast$}
 \{ \vec x \in \Pi_k FDk \,\mid\,  Fd_{jk}(x_j) = Fc_{jk}(x_k) \text{ for all $j, k \in \K$}\}\rlap{ .}
\end{equation}
Under this identification, the canonical comparison map $FX \to \lim FD$ is just the inclusion between these sets, seen as subobjects of $\Pi_k FDk$, and so injective; to complete the proof, we must show that it is also surjective. Thus we must show that each $\vec x \in \lim FD$ lies in~\eqref{eq:theset}, or in other words, that  $Fd_{jk}(x_j) = Fc_{jk}(x_k)$ for each $\vec x \in \lim FD$ and each $j, k \in J$. 
To this end, we consider the category $\K'$ of cospans from $j$ to $k$ in $\K$; since $\K$ is filtered and $\kappa$-small, it follows easily that $\K'$ is too. We define a functor $E \colon \K' \to \C$ by sending each cospan $f \colon j \to \ell \leftarrow k \colon g$ in $\K'$ to the 
%
%
apex of the pullback square\[
\cd[@-0.5em]{
  E(f,g) \ar[r]^-{u_{f,g}} \ar[d]_{v_{f,g}} & Dj \ar[d]^{Df} \\
  Dk \ar[r]_{Dg} & D\ell
}
\]
in $\C$. A simple calculation shows that
$p_k.v_{f,g} = p_j.u_{f,g}$, so that we have induced maps $q_{f,g} \defeq (u_{f,g}, v_{f,g}) \colon E(f,g) \to D_{jk}$, constituting a cocone $q$ under $E$ with vertex $D_{jk}$. We claim that this cocone is colimiting; whereupon, 
by the preceding part of the argument, the comparison $FD_{jk} \to \lim FE$ induced by $q$ will be  monic, and consequently the family $(Fq_{f,g} \mid (f,g) \in \K')$ will be jointly monic. Thus in order to verify that $Fd_{jk}(x_j) = Fc_{jk}(x_k)$, and so complete the proof, it will be enough to observe that for each cospan $f \colon j \to \ell \leftarrow k \colon g$ in $\K'$, we have:
\begin{align*}Fq_{f,g}(Fd_{jk}(x_j)) &= Fu_{f,g}(x_j) = Fu_{f,g}(FDf(x_\ell)) \\ &= Fv_{f,g}(FDg(x_\ell)) = Fv_{f,g}(x_k) = Fq_{f,g}(Fc_{jk}(x_k))\rlap{ .}
\end{align*}

It remains to show that $q$ is colimiting. For this, let $V \colon \K' \to \K$ denote the functor sending a $j,k$-cospan to its central object, and $\iota_1 \colon \Delta j \to V \leftarrow \Delta k \colon \iota_2$ the evident natural transformations. Now we have a commutative cube
\begin{equation*}
\cd[@!@-3em@C+0.5em]{
  E \ar[rr]^u \ar[dd]_v \ar[dr]^{q} & & \Delta(Dj) \ar[dd]_(0.25){D\iota_1}|\hole \ar@{=}[dr] \\ &
  \Delta(D_{jk}) \ar[rr]_(0.7){\Delta d_{jk}} \ar[dd]^(0.75){\Delta c_{jk}} & &
  \Delta(Dj) \ar[dd]^{\Delta p_j} \\
  \Delta(Dk) \ar[rr]^(0.33){D \iota_2}|\hole \ar@{=}[dr] & & DV \ar[dr]^{pV} \\ &
  \Delta(Dk) \ar[rr]_{\Delta p_k} & & \Delta X
}
\end{equation*}
in $[\K', \C]$; its front and rear faces are pullbacks, and remain so on applying the (conical) colimit functor $[\K', \C] \to \C$, since finite limits commute with $\kappa$-small filtered colimits in $\C$. To show that $q$ is colimiting is equally to show that it is inverted by this  functor;  for which, by the previous sentence, it is enough to show that $pV$ is likewise inverted, or in other words that $pV$ is a colimit cocone. But $\K$'s filteredness implies easily that $V \colon \K' \to \K$ is a final functor, so that $pV$, like $p$, is colimiting as desired.
\end{proof}

\subsection{Reflexive coequalisers}\label{subsec:reflexive}
For our final example, consider the class of lex-weights $\Phi_\mathrm{rc}$ comprising the single
functor $\phi \colon \K^\op \to \cat{Set}$, where $\K$ is the free category
with finite limits generated by a reflexive pair $(d, c) \colon X
\rightrightarrows Y$ (with common splitting $r$, say), and where $\phi$ is the
coequaliser in $[\K^\op, \cat{Set}]$ of $\K(\thg, d)$ and $\K(\thg, c)$. Now a
finitely complete category is $\Phi_\mathrm{rc}$-lex-cocomplete just when it
admits coequalisers of reflexive pairs. The following result characterises the
$\Phi_\mathrm{rc}$-exact categories.
\begin{Prop}\label{prop:reflexivechar}
The following are equivalent properties of the finitely complete $\C$:
\begin{enumerate}
\item $\C$ is $\Phi_\mathrm{rc}$-exact;
\item $\C$ is Barr-exact, and for every reflexive relation $R
    \rightarrowtail X \times X$ in $\C$, the chain $R \subseteq RR^oR
    \subseteq RR^oRR^oR \subseteq \cdots$ of subobjects of $X \times X$ has
    a pullback-stable colimit;
\item $\C$ is Barr-exact, and for every reflexive relation $R
    \rightarrowtail X \times X$ in $\C$, the chain $R \subseteq RR^oR
    \subseteq RR^oRR^oR \subseteq \cdots$ of subobjects of $X \times X$ has
    an effective, pullback-stable union.
\end{enumerate}
\end{Prop}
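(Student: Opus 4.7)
I begin by reducing to the case of small $\C$ via Proposition~\ref{prop:smallsubphiexact} (valid since $\Phi_\mathrm{rc}$ is a singleton, and both (2) and (3) are easily seen to be preserved by passage to small full finite-limit- and reflexive-coequaliser-closed subcategories), and proceed by the circuit (1) $\Rightarrow$ (3) $\Rightarrow$ (2) $\Rightarrow$ (3) $\Rightarrow$ (1); the direction (3) $\Rightarrow$ (2) is immediate.

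For (1) $\Rightarrow$ (3), Theorem~\ref{thm:embedding} yields a full $\Phi_\mathrm{rc}$-lex-cocontinuous embedding $J \colon \C \to \E$ into a Grothendieck topos. Barr-exactness of $\C$ follows as in Section~\ref{subsec:barrexact}, by viewing any equivalence relation as a reflexive pair whose coequaliser is preserved by $J$ and descending effectivity through the fully faithful lex $J$. For a reflexive relation $R \rightarrowtail X \times X$ with coequaliser $q \colon X \to Q$ and kernel pair $K$, the image $JK$ is the kernel pair of $Jq$ in $\E$, hence the equivalence relation generated by $JR$, hence the union $\bigcup_n J(K_n)$ (where $K_n$ denotes the $n$-th chain term, whose relational composites and image factorizations are preserved by the lex $J$). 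Lifting colimit cocones through the fully faithful $J$ together with pullback-stability of colimits in $\E$ then exhibits $K$ as the effective, pullback-stable union of the $K_n$ in $\C$. For (2) $\Rightarrow$ (3), I need only verify that $K \defeq \mathrm{colim}_n K_n$ is monic into $X \times X$: since each $K_n \rightarrowtail X\times X$ is monic, $K_n \times_{X \times X} K_m \cong K_{\min(n,m)}$, and applying pullback-stability of the chain colimit twice yields $K \times_{X \times X} K \cong \mathrm{colim}_n(K \times_{X \times X} K_n) \cong \mathrm{colim}_n K_n \cong K$, where the second isomorphism uses $K \times_{X \times X} K_n \cong \mathrm{colim}_m K_{\min(n,m)} \cong K_n$ (via stabilization). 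Thus the kernel pair of $K \to X \times X$ is trivial, and $K$ is the effective union of the chain.

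The substantive work is (3) $\Rightarrow$ (1), for which I construct a full $\Phi_\mathrm{rc}$-lex-cocontinuous embedding $J \colon \C \to \cat{Sh}(\C,\tau)$ into a Grothendieck topos; Theorem~\ref{thm:embedding} then yields (1). I take $\tau$ as the Grothendieck topology on $\C$ generated by (a) the regular-epi sieves, available since Barr-exactness implies regularity, and (b) the chain sieves $\{K_n \rightarrowtail K\}_n$ associated to each reflexive relation as in~(3). Subcanonicity of $\tau$ reduces to subcanonicity of the generators: for (a) this is standard, and for (b) it amounts precisely to the effectivity of chain unions. Given a reflexive pair $(d,c) \colon R \rightrightarrows X$ with image relation $S$, coequaliser $q$, and kernel pair $K$, I first establish the key identity $K = \bigcup_n S_n$ in $\C$: by relational calculus using~(3), $\bigcup_n S_n$ is an equivalence relation containing $R$; any equivalence relation $E$ containing $R$ contains each $S_n$ by a simple induction using $EE \subseteq E$ and $E^o = E$; and $K$ is the smallest equivalence relation containing $R$, since $\C$ is Barr-exact. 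Now $Jq$ is a regular epi in $\cat{Sh}(\C,\tau)$ (as $J$ preserves regular epis) with kernel pair $JK$; and since $\{S_n \to K\}$ is a covering family with each $J(S_n)$ monic into $JK$, the corresponding filtered cocone of monos computes a union in the topos, giving $JK = \bigcup_n J(S_n)$, which is the equivalence relation generated by $J(d,c)$, so that $Jq$ is the required coequaliser. The main obstacle I anticipate is verifying the stability axiom for $\tau$---that pullbacks of chain covers along arbitrary morphisms again generate covering sieves---which will require careful relational-calculus bookkeeping together with the pullback-stability afforded by~(3).
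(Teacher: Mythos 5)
Your proposal follows essentially the same route as the paper's proof: reduce to small $\C$ via Proposition~\ref{prop:smallsubphiexact}, use Theorem~\ref{thm:embedding} to extract a full embedding preserving finite limits and reflexive coequalisers for (1)~$\Rightarrow$~(3), and for the converse build a subcanonical topology whose generating covers are the regular epimorphisms together with the chain inclusions, then check that the sheaf embedding preserves each stage of the construction of a reflexive coequaliser. Your explicit ``pull the colimit back twice'' argument for monicity of the chain colimit in (2)~$\Rightarrow$~(3) is exactly the device used for binary unions in Proposition~\ref{prop:effunions}, to which the paper simply appeals at this point; and your identification of the kernel pair with $\bigcup_n S_n$ is the paper's observation that $(d',c')$ is the kernel pair of the coequaliser.

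There is one elided step that is not free, and it sits at the delicate point of this example. In (3)~$\Rightarrow$~(1) you write ``with \dots coequaliser $q$'' for an arbitrary reflexive pair, but condition (3) only supplies Barr-exactness (hence coequalisers of \emph{equivalence relations}) together with the chain unions; and Theorem~\ref{thm:embedding} can only be invoked once $\C$ is known to be $\Phi_\mathrm{rc}$-lex-cocomplete. So you must first \emph{construct} the coequaliser of $(d,c)$: show by the relational calculus that $\bigcup_n S_n$ is an equivalence relation (transitivity and symmetry need the pullback-stability of the union in order to commute relational composition past the colimit), coequalise it using Barr-exactness, and observe that this coequaliser also coequalises $(d,c)$ since $R \twoheadrightarrow S \subseteq \bigcup_n S_n$. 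Your ``key identity'' $K = \bigcup_n S_n$ then falls out of effectivity of equivalence relations, rather than from the minimality argument you sketch, which presupposes the kernel pair of the as-yet-unconstructed $q$. On the other hand, the ``main obstacle'' you anticipate is not one: pullback-stability of the chain covers along arbitrary maps into $R^\ast$ is verbatim the stability hypothesis in (2)/(3)---a pullback-stable colimit cocone is by definition one all of whose pullbacks remain colimiting---so no further relational bookkeeping is required for the topology axioms.
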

Observe that, in parts (2) and (3), we employ the calculus of internal
relations in $\C$---see \cite{Carboni1987Cartesian}, for instance---which we
are entitled to do, since $\C$ is Barr-exact, and so in particular regular.

\begin{proof}
The argument that (2) $\Leftrightarrow$ (3) is exactly as in
Proposition~\ref{prop:effunions} above, and so it is enough to show that these
are in turn equivalent to (1). We begin by showing that a $\C$ as in (3) is
$\Phi_\mathrm{rc}$-exact. First we show that such a $\C$ admits coequalisers of
reflexive pairs. The argument is a standard one---given in~\cite[Lemma
A1.4.19]{Johnstone2002Sketches}, for example---and so we indicate only its
adaptation to the particularities of our situation. Given a reflexive pair $(s,
t) \colon Y \to X \times X$, we first form its image $(d, c) \colon R
\rightarrowtail X \times X$: now a coequaliser for the latter will also be one
for the former, as the comparison map $Y \twoheadrightarrow R$ is regular epi.
Since $R$ is a reflexive relation, we may by assumption form the union of the
chain $R \subseteq RR^oR \subseteq RR^oRR^oR \subseteq \cdots$; let us write it
as $(d',c') \colon R^\ast \rightarrowtail X \times X$. By stability, $(d', c')$
is an equivalence relation, and so admits a coequaliser, which it is not hard
to show is also a coequaliser for $(d,c)$, and hence for $(s,t)$. Thus $\C$
admits coequalisers of reflexive pairs; let us record for future use that,
since $\C$ is Barr-exact, the $(d',c')$ of the above argument is also the
kernel-pair of the coequaliser of $(s,t)$.

We now show that $\C$ is $\Phi_\mathrm{rc}$-exact. By
Proposition~\ref{prop:smallsubphiexact}, we may assume that $\C$ is small;
whereupon, by Theorem~\ref{thm:embedding}, it is enough to show that $\C$
admits a fully faithful embedding into a Grothendieck topos which preserves
finite limits and coequalisers of reflexive pairs. We define a topology on $\C$
by declaring that every regular epimorphism should cover its codomain, and
that, for every reflexive relation $R \rightarrowtail X \times X$, the family
of union inclusions
\begin{equation}\label{eq:coveringfamily}
    \cd[@!C]{
        R \ar[dr] & RR^oR \ar[d] & RR^oRR^oR \ar[dl] & \dots \\ & R^\ast
    }
\end{equation}
should cover $R^\ast$. By assumption, these covers are effective-epimorphic and
stable under pullback, and so generate a subcanonical topology on $\C$. Thus
there is a full, lex embedding $J \colon \C \to \cat{Sh}(\C)$, and we will be
done if we can prove that $J$ preserves coequalisers of reflexive pairs.
Certainly $J$ preserves regular epimorphisms; it also preserves unions of
chains $R \subseteq RR^oR \subseteq RR^oRR^oR \subseteq \dots$, since such
unions are effective in $\C$ and in $\cat{Sh}(\C)$, and
each~\eqref{eq:coveringfamily} is covering. As $J$ also preserves finite
limits, it therefore preserves every part of the construction by which we
calculated the coequaliser of a reflexive pair, and so must preserve the
coequaliser as well. This proves that (3) $\Rightarrow$ (1).

To complete the proof, we now show that (1) $\Rightarrow$ (2). Let $\C$ be a
$\Phi_\mathrm{rc}$-exact category; without loss of generality, a small one. By
Theorem~\ref{thm:embedding}, such a $\C$ has finite limits and coequalisers of
reflexive pairs, and admits a full embedding $J \colon \C \to \E$ into a
Grothendieck topos which preserves them. In particular, $\C$ has, and $J$
preserves, coequalisers of equivalence relations, and so we deduce as in
Section~\ref{subsec:barrexact} that $\C$ is Barr-exact. It remains to show that
the chain of subobjects $R \subseteq RR^oR \subseteq \cdots$ associated to any
reflexive relation $(d,c) \colon R \rightrightarrows X$ in $\C$ admits a stable
colimit. Let $R^\ast \rightrightarrows X$ be the kernel-pair of the coequaliser
of $(d,c)$; we have $R \subseteq RR^oR \subseteq \dots \subseteq R^\ast$ as
subobjects of $X \times X$, and we claim that these inclusions exhibit $R^\ast$
as the desired stable colimit. Now $J(R^\ast)$ is the kernel-pair of the
coequaliser of $(Jd, Jc)$; but because $\E$ satisfies the conditions of (2),
the construction with which we began this proof shows that $J(R^\ast)$ is also
the stable colimit of $JR \subseteq (JR)(JR)^o(JR) \subseteq \cdots$; whence,
since $J$ is fully faithful and lex, $R^\ast$ is the stable colimit of $R
\subseteq RR^oR \subseteq \cdots$ as desired.
\end{proof}

\section{The case of a general $\Phi$, when $\V = \cat{Set}$}\label{sec:4}
In each of the examples of the previous section, we derived elementary
descriptions of particular notions of $\Phi$-exactness in an essentially
\emph{ad hoc} fashion. In this section, we show that---still in the case $\V =
\cat{Set}$---we may give an elementary description which is valid for an
arbitrary class of lex-weights $\Phi$. The key idea needed is Anders Kock's
notion of \emph{postulatedness}. Given a finitely complete $\C$ and a topology
$j$ on it, Kock defines in~\cite{Kock1989Postulated} what it means for a cocone
in $\C$ to be \emph{postulated} with respect to $j$. If $\C$ is small, then the
postulatedness of a cocone is equivalent to its being sent to a colimit by the
functor $\C \to \cat{Sh}_j(\C)$. The relevance this has for us is as follows.
Given $\C$ a small, lex, and $\Phi$-lex-cocomplete category, if
$\Phi$-lex-colimit cocones are postulated with respect to some
\emph{subcanonical} topology on $\C$, then there is a full embedding of $\C$
into a Grothendieck topos via a functor preserving finite limits and
$\Phi$-lex-colimits; whence $\C$ is $\Phi$-exact. Conversely, if $\C$ is
$\Phi$-exact then by Theorem~\ref{thm:embedding}, it admits a full,
$\Phi$-exact, embedding into a Grothendieck topos. In
Section~\ref{sec:relative} below, we will see that this embedding may be taken
to be of the form $\C \to \cat{Sh}_j(\C)$ for some topology $j$ on $\C$; but
now this $j$ must be subcanonical, and all $\Phi$-lex-colimits postulated with
respect to it. Thus for small $\C$, $\Phi$-exactness is equivalent to the
postulatedness of $\Phi$-lex-colimit cones with respect to some subcanonical
topology on $\C$. In fact, this equivalence remains valid even when $\C$ is no
longer small; we now give the details of this argument, including a
reconstruction of those aspects of Kock's theory which will be necessary for
our development.

We begin by giving our formulation of postulatedness, which diverges from
Kock's in two aspects. The first has been anticipated above: a cocone in $\C$
will be postulated in our sense just when it is postulated in Kock's sense with
respect to some subcanonical topology on $\C$; equivalently, with respect to
the canonical topology (that is, the largest subcanonical topology) on $\C$.
The second divergence is one of presentation: we are able to give a more
compact definition because we are using the language of weighted colimits. We
will later see how Kock's presentation can be recovered from ours.

Given $\C$ finitely complete, we say that a morphism $f \colon \phi \to \psi$
of $\P \C$ is \emph{final} if it is orthogonal to every representable---in the
sense that any map from $\phi$ to a representable admits a unique extension
along $f$---and \emph{stably final} when all of its pullbacks are final. Note
that, in particular, a map $\phi \to YC$ is final just when it exhibits $C$ as
the colimit $\phi \star 1_\C$. We now say that $f \colon \phi \to \psi$ is
\emph{postulated} if it satisfies the following two conditions:
\begin{enumerate}[(P1)]
\item[(P1)] The image $\im(f) \rightarrowtail \psi$ of $f$ is stably final;
\item[(P2)] The diagonal $\delta \colon \phi \rightarrowtail \phi
    \times_\psi \phi$ is stably final.
\end{enumerate}

If $\C$ is small-exact, then $Y \colon \C \to \P \C$ admits a left exact left
adjoint $L$ and now a morphism of $\P \C$ is final just when it is inverted by
$L$. Since the left adjoint preserves pullbacks, any map which is inverted by
$L$ is in fact stably inverted; so every final morphism is stably final, and
$Lf$ is invertible if and only if $f$ is stably final. In this context, a
morphism $f$ is postulated if and only if both its image and its diagonal are
inverted by $L$, which is to say that it is \emph{$L$-bidense} in the sense
of~\cite[Definition 3.41]{Johnstone1977Topos}. Still in this context, the
$L$-bidense morphisms are in fact precisely those inverted by
$L$---see~\cite[Corollary 3.43]{Johnstone1977Topos}, for example---so that if
$\C$ is small-exact, a morphism of $\P \C$ is postulated if and only if it is
final; this was shown to be the case in Proposition~2.1
of~\cite{Kock1989Postulated}. Yet even if $\C$ is not small-exact, we still
have:
\begin{Prop}\label{prop:postulatedfinal}
(c.f.~\cite[Proposition 1.1]{Kock1989Postulated}). If $\C$ is finitely
complete, then any postulated morphism in $\P \C$ is stably final.
\end{Prop}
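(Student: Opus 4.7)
The plan is to reduce the claim to showing that every postulated morphism is final, and to prove this reduction by observing that postulatedness is itself stable under pullback in $\P \C$. Once both facts are established, the proposition is immediate: if $f$ is postulated, then so is every pullback $g^\ast f$, hence every such pullback is final, which is precisely what it means for $f$ to be stably final.

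For the first step (stability under pullback of postulatedness), I would use that the (regular epi, mono) factorisation in $\P \C$ is computed pointwise, hence is stable under pullback: pulling $f = m \circ e$ back along $g \colon \chi \to \psi$ yields again a (regular epi, mono) factorisation of $g^\ast f$, so that $\im(g^\ast f) \cong g^\ast(\im f)$. Similarly, the diagonal of $g^\ast f$ may be identified with the pullback of $\delta \colon \phi \to \phi \times_\psi \phi$ along the natural map into it. Since stably final morphisms are, by definition, closed under pullback, both (P1) and (P2) transfer from $f$ to $g^\ast f$.

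For the second step (postulated implies final), suppose $f \colon \phi \to \psi$ is postulated and $g \colon \phi \to YC$ is given. Form the factorisation $f = m \circ e$ with $e \colon \phi \twoheadrightarrow \im(f)$ regular epi and $m \colon \im(f) \rightarrowtail \psi$ the image mono. Since $m$ is mono, the kernel pair $(p_1,p_2) \colon \phi \times_\psi \phi \rightrightarrows \phi$ of $f$ coincides with that of $e$, and the diagonal $\delta$ satisfies $p_1\delta = p_2\delta = \id_\phi$. The key observation is that $\delta$ is final (being stably final), so the maps $gp_1, gp_2 \colon \phi \times_\psi \phi \to YC$, both of which pre-compose with $\delta$ to give $g$, must coincide by uniqueness of extension. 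Thus $g$ coequalises the kernel pair of $e$, factoring uniquely as $g = g' \circ e$ for some $g' \colon \im(f) \to YC$. Now finality of $m$ produces a unique $h \colon \psi \to YC$ with $hm = g'$, and then $hf = hme = g'e = g$; uniqueness of such an $h$ follows from $e$ being epi together with the orthogonality of $m$ to $YC$.

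The only point that requires genuine care is verifying stability of image factorisations under pullback within $\P \C$ (as opposed to in the ambient presheaf topos), since a priori one must check that images of maps between small presheaves remain small. This is the main technical obstacle, though it follows from standard facts: a subobject of a small presheaf is small, since smallness is witnessed by left Kan extension from a small subcategory and this is inherited by subobjects. The rest of the argument is formal manipulation of image factorisations and the orthogonality definition of finality.
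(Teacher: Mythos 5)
Your proof is correct and follows essentially the same route as the paper's: reduce to finality via pullback-stability of images and diagonals, then use (P2) to show any map to a representable coequalises the kernel pair, factor through the regular epi $e$, and extend along $m$ using (P1). The extra remark on smallness of images in $\P\C$ is a reasonable point of care but does not change the argument.
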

\begin{proof}
Observe that postulated morphisms are stable under pullback, since images and
diagonals are so; hence it is enough to show that any postulated morphism is
final. Given the postulated $f$, form its kernel-pair, its image and the
diagonal of the kernel-pair as in
\begin{equation*}
   \cd[@C-0.1em]{ \phi \ar[r]^-{\delta} & \phi \times_\psi \phi \ar@<4pt>[r]^-d \ar@<-4pt>[r]_-c & \phi \ar[r]^-e & \im(f) \ar[r]^-m & \psi\rlap{ .}}
\end{equation*}
Now $m$ is final by (P1); we must show that $e$ is too, which is to say that
every $g \colon \phi \to YE$ admits a unique extension along $e$. Since $e$ is
the coequaliser of the kernel-pair of $f$, this will happen just if $gd = gc$;
but $gd\delta = g = gc\delta$ and so $gd = gc$ since $\delta$ is final by (P2).
Thus $g$ extends along $e$; the uniqueness is forced since $e$ is epimorphic.
\end{proof}
Thus the force of the discussion preceding this proposition is that for a
small-exact category $\C$, every final morphism in $\P \C$ is postulated. We
now consider the extent to which this remains true on passing from small-exact
categories to $\Phi$-exact ones. First we need a preparatory result.
\begin{Lemma}\label{lem:stablyfinal}
A morphism $f \colon \phi \to \psi$ of $\P \C$ is stably final just when every
pullback of it along a map with representable domain is final.
\end{Lemma}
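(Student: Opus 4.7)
The forward direction is immediate from the definition: if $f$ is stably final then every pullback of it is final, in particular those along maps with representable domain. For the converse, assume every pullback of $f \colon \phi \to \psi$ along a representable-domain map is final, and consider an arbitrary $g \colon \chi \to \psi$ in $\P\C$; the task is to show that $g^\ast f \colon \chi \times_\psi \phi \to \chi$ is final.

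The idea is to reduce the general case to the representable one using density of the Yoneda embedding. First I would express $\chi$---which is small by definition of $\P\C$---as a small colimit of representables, $\chi \cong \colim_i YC_i$, with coprojections $j_i \colon YC_i \to \chi$. Composing with $g$ yields maps $gj_i \colon YC_i \to \psi$, and by hypothesis each pullback $(gj_i)^\ast f \colon YC_i \times_\psi \phi \to YC_i$ is final. Since colimits in $\P\C$ are universal---being inherited from those in $[\C^\op, \cat{Set}]$---the pullback functor $g^\ast \colon \P\C / \psi \to \P\C / \chi$ preserves this colimit, so that $\chi \times_\psi \phi \cong \colim_i (YC_i \times_\psi \phi)$ and $g^\ast f$ is identified with the map induced into the colimit $\chi$.

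To conclude, given any test map $h \colon \chi \times_\psi \phi \to YD$, I would restrict $h$ to each $YC_i \times_\psi \phi$ to obtain $h_i$, use finality of $(gj_i)^\ast f$ to extend uniquely to $\bar h_i \colon YC_i \to YD$, and glue the $\bar h_i$ into a single map $\bar h \colon \chi \to YD$ extending $h$ via the universal property of $\chi \cong \colim_i YC_i$. The main obstacle---and the one step where care is needed---is verifying that the $\bar h_i$ assemble into a cocone on the diagram defining $\chi$; this will follow from the uniqueness clause of finality applied to the pullbacks of $f$ along the transition maps between the $YC_i$'s (compatibility of the $h_i$'s as a cocone forcing, after precomposition with $(gj_i)^\ast f$, the corresponding compatibility of the $\bar h_i$'s), but it is the only place where the argument is not entirely automatic. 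Uniqueness of $\bar h$ follows from the same principle applied once more.
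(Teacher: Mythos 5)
Your proposal is correct and follows essentially the same route as the paper: write the codomain of the pullback as a small colimit of representables, use universality of colimits in $\P\C$ to present the pulled-back map as a colimit of the (final, by hypothesis) pullbacks along the colimit injections, and conclude. The only difference is cosmetic: where you unwind the gluing of the extensions $\bar h_i$ by hand, the paper simply invokes the general principle that final maps, being defined by an orthogonality property, are closed under colimits in the arrow category.
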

\begin{proof}
Suppose given some $g \colon \psi' \to \psi$; we are to show that the pullback
$f' \colon \phi' \to \psi'$ of $f$ along $g$ is final. Let $(q_i \colon YC_i
\to \psi' \mid i \in \I)$ exhibit $\psi'$ as a (conical) colimit of
representables. For each $i$, the pullback $f'_i \colon \phi'_i \to YC_i$ of
$f'$ along $q_i$ is a pullback of $f$ along $gq_i$, so final by assumption. As
colimits in $\P \C$ are stable under pullback, $\phi'$ is the colimit of the
$\phi'_i$'s, and hence $f'$ is the colimit in $(\P \C)^\mathbf 2$ of the final
$f'_i$'s, and so itself final, since final maps, being defined by an
orthogonality property, are stable under colimits.
\end{proof}
\begin{Prop}\label{prop:finalispostulated}
If $\Phi$ is a class of lex-weights, and $\C$ a $\Phi$-exact category, then
each final morphism of $\P \C$ lying in $\app \Phi \C$ is postulated.
\end{Prop}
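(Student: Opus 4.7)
The plan is to exploit the lex reflector $L \dashv W \colon \C \to \app \Phi \C$ afforded by $\Phi$-exactness (Proposition~\ref{prop:charphilexcocomp}(7)). The key preliminary observation is that a morphism $g \colon \phi \to \psi$ of $\app \Phi \C$ is final in $\P \C$ if and only if $Lg$ is invertible in $\C$. Indeed, since $J \colon \app \Phi \C \to \P \C$ is fully faithful and each representable $YC = JWC$ lies in its image, orthogonality of $g$ to $YC$ in $\P \C$ coincides with orthogonality to $WC$ in $\app \Phi \C$; by the adjunction this is in turn orthogonality of $Lg$ to $C$ in $\C$; and orthogonality to every object of $\C$ is equivalent to being an isomorphism. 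Since $L$ is left exact and $\app \Phi \C$ is closed in $\P \C$ under finite limits, an immediate corollary is that any pullback of such a final map, taken along a map whose codomain lies in $\app \Phi \C$, is again final.

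For condition (P2), the diagonal $\delta \colon \phi \to \phi \times_\psi \phi$ lies in $\app \Phi \C$ since finite limits there are computed as in $\P \C$. Because $Lf$ is invertible by finality of $f$, so is its diagonal $L\delta$, whence $\delta$ is final. By Lemma~\ref{lem:stablyfinal}, stable finality reduces to showing that every pullback of $\delta$ along a map $YC \to \phi \times_\psi \phi$ is final; but such pullbacks live wholly in $\app \Phi \C$, and the preceding corollary ensures each is final.

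For condition (P1) the difficulty is that $\im(f) \in \P \C$ need not lie in $\app \Phi \C$, so $L$ cannot be applied to it directly. Instead, by Lemma~\ref{lem:stablyfinal}, it suffices to show that every pullback of $m \colon \im(f) \to \psi$ along a map $YC \to \psi$ is final. Since images in the topos $\P \C$ are stable under pullback, this pullback is of the form $\im(f') \rightarrowtail YC$, where $f' \colon \phi' \to YC$ is the pullback of $f$ along the same map. Now $f'$ is a morphism of $\app \Phi \C$ whose reflection $Lf'$ is a pullback of the isomorphism $Lf$, hence itself an isomorphism; so $f'$ is final by the preliminary observation. Factorising $f' = m' \circ e'$ in $\P \C$ with $e'$ a regular epimorphism and $m' = \im(f') \rightarrowtail YC$ monic, we see that $m'$ is also final: any $g \colon \im(f') \to YD$ yields $g e' \colon \phi' \to YD$, which by finality of $f'$ extends uniquely along $f'$ to some $\bar g \colon YC \to YD$ with $\bar g m' e' = g e'$; since $e'$ is epi, $\bar g m' = g$, and such a $\bar g$ is unique because $m'$ is monic.

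The main obstacle is (P1): because $\app \Phi \C$ need not be closed under images in $\P \C$, the reflector cannot be applied directly to $\im(f)$. The manoeuvre above sidesteps this by first pulling back along representables to stay inside $\app \Phi \C$, and then exploiting the epi-mono factorisation in the topos $\P \C$ to transfer finality from $f'$ to its image.
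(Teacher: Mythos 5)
Your proof follows essentially the same route as the paper's: finality of a morphism of $\app \Phi \C$ is detected by whether the lex reflector $L$ inverts it, left exactness of $L$ gives finality of pullbacks along representables and hence stable finality via Lemma~\ref{lem:stablyfinal}, (P2) is handled through the diagonal of the kernel-pair of $Lf$, and (P1) through pullback-stability of image factorisations in $\P \C$ together with the fact that the image of a final map is final. One small correction to that last step: uniqueness of the extension $\bar g$ along $m'$ does not follow from $m'$ being monic (monicity controls maps into its domain, not maps out of its codomain); it follows instead from the uniqueness clause of finality of $f'$, since precomposition with $m'$ followed by the injective precomposition with the epimorphism $e'$ equals the bijective precomposition with $f'$.
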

\begin{proof}
As $\C$ is $\Phi$-exact, $W \colon \C \to \app \Phi \C$ admits a left exact
left adjoint $L$, and as above, a morphism of $\app \Phi \C$ is final in $\P
\C$ just when it is inverted by $L$. Since $L$ preserves pullbacks, if $f
\colon \phi \to \psi$ is final and lies in $\app \Phi \C$, then any pullback of
it along a map $YC \to \psi$ is again final, since the representables lie in
$\app \Phi \C$. So by Lemma~\ref{lem:stablyfinal}, $f$ is stably final in $\P
\C$, and it follows that $\im(f) \rightarrowtail \psi$ is stably final, since the
image of any final map is easily shown to be final, and image factorisations in
$\P \C$ are stable under pullback. This verifies (P1) for $f$; as for (P2),
observe that the diagonal $\delta \colon \phi \rightarrowtail \phi \times_\psi
\phi$ lies in $\app \Phi \C$, and is sent by $L$ to the diagonal of the
kernel-pair of $Lf$, which is invertible since $Lf$ is. Thus $\delta$ is final
and lies in $\app \Phi \C$, and so arguing as before, is stably final.
\end{proof}
We may now give the promised correspondence between $\Phi$-exactness and the
postulatedness of $\Phi$-lex-colimits. Given $\Phi$ a class of lex-weights, and
$\C$ a finitely complete and $\Phi$-lex-cocomplete category, by a
\emph{$\Phi$-lex-colimit} morphism in $\P \C$, we mean a final morphism of the
form $\Lan_D(\phi) \to Y(\phi \star D)$ for some $\phi \in \Phi[\K]$ and lex $D
\colon \K \to \C$; and by saying that $\Phi$-lex-colimits are postulated in
$\C$, we mean to say that every such $\Phi$-lex-colimit morphism is postulated.
\begin{Thm}\label{prop:phiexactsetchar}
Let $\Phi$ be a class of lex-weights. Then the finitely complete and
$\Phi$-lex-cocomplete $\C$ is $\Phi$-exact if and only if $\Phi$-lex-colimits
are postulated in $\C$.
\end{Thm}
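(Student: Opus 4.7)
The statement splits into two directions. The forward direction reduces immediately to Proposition~\ref{prop:finalispostulated}, while the reverse direction passes through Theorem~\ref{thm:embedding} after first handling the small case and then bootstrapping to arbitrary size.

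For the forward direction, suppose $\C$ is $\Phi$-exact, and fix $\phi \in \Phi[\K]$ and $D \colon \K \to \C$ lex. The canonical morphism $f \colon \Lan_D(\phi) \to Y(\phi \star D)$ is final in $\P\C$, since $\phi \star D$ is by hypothesis the colimit in $\C$. Its target is representable, hence in $\app \Phi \C$, while its source equals $\phi \star (YD)$, a $\Phi$-lex-colimit of representables taken in $\app \Phi \C$: indeed $YD \colon \K \to \app \Phi \C$ is lex since $Y$ and $D$ are, and the full inclusion $\app \Phi \C \hookrightarrow \P \C$ preserves and reflects finite limits and $\Phi$-lex-colimits. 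Proposition~\ref{prop:finalispostulated} now applies, so $f$ is postulated.

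For the reverse direction in the small case, I would equip the small $\C$ with its canonical topology $j$---the largest subcanonical topology---and exploit the identification, recorded in the discussion preceding Proposition~\ref{prop:postulatedfinal}, of the paper's notion of postulated morphism with that of a morphism inverted by the sheafification $L \colon \P \C \to \cat{Sh}_j(\C)$. Thus $L$ inverts each $\Phi$-lex-colimit morphism, so the composite $LY \colon \C \to \cat{Sh}_j(\C)$ is a fully faithful (by subcanonicity of $j$), lex, $\Phi$-lex-cocontinuous embedding into a Grothendieck topos, and Theorem~\ref{thm:embedding} delivers $\Phi$-exactness of $\C$.

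For the general case, I would use Proposition~\ref{prop:setsingletonexact} to reduce to showing $\C$ is $\{\phi\}$-exact for each $\phi \in \Phi$, and then Proposition~\ref{prop:smallsubphiexact}, applicable because $\{\phi\}$ is a small class, to reduce to showing $\{\phi\}$-exactness of each small, full, finite-limit- and $\{\phi\}$-lex-colimit-closed subcategory $\D \subseteq \C$. By the small case, this in turn amounts to showing that $\{\phi\}$-lex-colimits are postulated in $\D$, and this inheritance from $\C$ to $\D$ is the main obstacle. One has to analyse the restriction functor $I^\ast \colon \P \C \to \P \D$ along $I \colon \D \hookrightarrow \C$: verify that it sends the canonical comparison morphism for a $\{\phi\}$-lex-colimit in $\D$ to the corresponding one in $\P \D$ (so that images and diagonals transfer, $I^\ast$ being lex), and check that stable finality descends from $\P \C$ to $\P \D$. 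The key leverage here is fullness of $I$: each map $Y_\D Y \to I^\ast(\psi)$ in $\P \D$ lifts via Yoneda to a map $Y_\C(IY) \to \psi$ in $\P \C$, so pullbacks against representables in $\P \D$ can be computed inside $\P \C$ and their finality read off from the stable finality assumed there.
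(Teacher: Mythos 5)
Your proposal follows the paper's strategy in outline---the forward direction via Proposition~\ref{prop:finalispostulated} is exactly the paper's, and the reverse direction likewise reduces to the small case and then builds a subcanonical topology whose sheaf embedding preserves $\Phi$-lex-colimits so that Theorem~\ref{thm:embedding} applies---but the two halves of your converse diverge from the paper in instructive ways. In the small case you take the canonical topology and invoke ``postulated $\Rightarrow$ inverted by sheafification''; this is correct, but it rests on the standard fact that bidense morphisms are inverted by the associated-sheaf functor, which the paper records only for the reflection $L \colon \P\C \to \C$ of a small-exact $\C$ (the passage you cite concerns that situation, not an arbitrary subcanonical topology). The paper instead generates its topology from precisely the sieves $\im(f) \rightarrowtail YC$ and the equalisers $\theta \rightarrowtail YD$ supplied by (P1) and (P2), notes these are stably final so the topology is subcanonical, and then checks by hand that every sheaf is orthogonal to each $\Phi$-lex-colimit morphism by splitting it through its image and diagonal---thereby staying self-contained. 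Where you are more careful than the paper is the reduction to the small case: the paper simply says ``we may assume $\C$ small'', silently assuming that postulatedness descends to every small, full, finite-limit- and $\{\phi\}$-lex-colimit-closed $\D \subseteq \C$. You rightly flag this, but your sketch does not quite close it: fullness of $I \colon \D \hookrightarrow \C$ lets you compute pullbacks along representables of $\D$ inside $\P\C$, yet finality of a map $\psi'' \to Y_{\C}(IX)$ in $\P\C$ does not by itself yield finality of $I^\ast\psi'' \to Y_{\D}X$ in $\P\D$, since a cocone $I^\ast\psi'' \to Y_{\D}W$ need not visibly extend to one $\psi'' \to Y_{\C}W$. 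The clean way to finish is via Proposition~\ref{prop:postulatedkock}: conditions (P1\('\)) and (P2\('\)) for the colimit computed in $\D$ involve the same families as for the colimit in $\C$ (all the $B_j$, $B_j \times_C B_k$ and $L_z$ lie in $\D$ by finite-limit closure), and a family of maps between objects of $\D$ that is stably effective-epimorphic in $\C$ is so in $\D$, because a compatible family indexed by the $\D$-part of the generated sieve extends uniquely to the whole sieve---factor an arbitrary member through a generator and use the pullbacks, which lie in $\D$, to verify well-definedness. With that supplement your argument is complete.
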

\begin{proof}
If $\C$ is $\Phi$-exact, then every $\Phi$-lex-colimit morphism, being final
and lying in $\app \Phi \C$, is postulated by
Proposition~\ref{prop:finalispostulated}. Conversely, suppose that each
$\Phi$-lex-colimit morphism in $\P \C$ is postulated; we will show that $\C$ is
$\Phi$-exact. By Propositions~\ref{prop:smallsubphiexact}
and~\ref{prop:setsingletonexact}, we may assume that $\C$ is small, and now we
define a topology on $\C$ as follows. For each $\Phi$-lex-colimit morphism $f
\colon \phi \to YC$ in $\P \C$, we declare that its image $\im(f)
\rightarrowtail YC$ should be a covering sieve, and that for each pair $h, k
\colon YD \rightrightarrows \phi$ with $fh = fk$, their equaliser $\theta
\rightarrowtail YD$ should be a covering sieve. Each $\im(f) \rightarrowtail
YC$ is stably final by (P1), whilst each $\theta \rightarrowtail YD$ is stably
final by (P2), being the pullback of $\delta \colon \phi \rightarrowtail \phi
\times_{YC} \phi$ along some $(h,k) \colon YD \to \phi \times_{YC} \phi$. Hence
these sieves generate a subcanonical topology on $\C$, and we have a full, lex
embedding $J \colon \C \to \cat{Sh}(\C)$. To complete the proof, it is enough
to show that $J$ preserves $\Phi$-lex-colimits; equivalently, that every sheaf
sends $\Phi$-lex-colimits in $\C$ to limits in $\cat{Set}$; equivalently, that
every sheaf $F$ is orthogonal in $\P \C$ to every $\Phi$-lex-colimit morphism
$f \colon \phi \to YC$. Fixing $F$ and $f$, and arguing as in
Proposition~\ref{prop:postulatedfinal}, it is enough to show that $F$ is
orthogonal to $m \colon \im(f) \rightarrowtail YC$ and to $\delta \colon \phi
\rightarrowtail \phi \times_{YC} \phi$. Certainly $F$ is orthogonal to $m$,
since $m$ is covering and $F$ a sheaf; as for $\delta$, it suffices, arguing
now as in Lemma~\ref{lem:stablyfinal}, to demonstrate $F$'s orthogonality to
$g^\ast(\delta)$ for every $D \in \C$ and $g \colon YD \to \phi \times_{YC}
\phi$. But to give $g$ is equally well to give $h, k \colon YD
\rightrightarrows \phi$ satisfying $fh = fk$, and now $g^\ast(\delta)$ is
equally well the equaliser of $h$ and $k$, and so a covering sieve; to which
$F$, being a sheaf, is orthogonal.
\end{proof}
We now explain how our definition of postulatedness relates to Kock's. Suppose
given a finitely complete $\C$ and a map $f \colon \phi \to YC$ in $\P \C$. We
will describe in elementary terms what it means for $f$ to be postulated, doing
so with respect to some presentation of $\phi$ as a coequaliser
\begin{equation}\label{eq:presentation}
    \cd{\textstyle\sum_{i \in I} YA_i \ar@<3pt>[r]^-{s} \ar@<-3pt>[r]_-{t} & \textstyle\sum_{j \in J} YB_j \ar@{->>}[r]^-q & \phi\rlap{ .}}
\end{equation}
Observe that to give $s$ and $t$ is equally well to give functions $\sigma,
\tau \colon I \rightrightarrows J$ and families of maps $(s_i \colon A_i \to
B_{\sigma i} \mid i \in I)$ and $(t_i \colon  A_i \to B_{\tau i} \mid i \in I)$
and that to give $q$ is equally well to give a family of maps $(q_j \colon YB_j
\to \phi \mid j \in J)$ with $q_{\sigma i}.Ys_i = q_{\tau i}.Yt_i$ for each $i
\in I$. Moreover, as $q$ is the coequaliser of $s$ and $t$, to give $f \colon
\phi \to YC$ is equally well to give a family of maps $(r_j \colon B_j \to C
\mid j \in J)$ such that $r_{\sigma i}. s_i = r_{\tau i}. t_i$ for each $i \in
I$. Given now $j, k \in J$, we define a \emph{zig-zag} from $j$ to $k$ to be a
diagram
\begin{equation}\label{eq:zigzag}
    \cd[@!C@C-1em]{ &
    A_{i_1} \ar[dl]_{f_1} \ar[dr]^{g_1} & &
    A_{i_2} \ar[dl]_{f_2} \ar[dr]^{g_2} & &
    & A_{i_n} \ar[dl]_{f_n} \ar[dr]^{g_n}\\
    B_{j_0 = j} & &
    B_{j_1} & &
    {} \ar@{}[r]|{\cdots} & & &
    B_{j_n = k}
}
\end{equation}
where each $i_m \in I$, each $j_m \in J$, and for each $1 \leqslant m \leqslant
n$, either $f_m = s_{i_m}$ and $g_m = t_{i_m}$, or $f_m = t_{i_m}$ and $g_m =
s_{i_m}$. We write $ZZ(j, k)$ for the set of zig-zags from $j$ to $k$. To each
zig-zag $z \in ZZ(j,k)$, we may associate the span $a_z \colon B_j \leftarrow
L_z \rightarrow B_k \colon b_z$ obtained by composing together the spans
appearing in $z$; and now, since $r_{\sigma i}. s_i = r_{\tau i}. t_i$ for each
$i \in I$, also $r_j . a_z = r_k . b_z$, and so there is an induced $\ell_z =
(a_z, b_z) \colon L_z \to B_j \times_C B_k$.
\begin{Prop}\label{prop:postulatedkock}
The morphism $f \colon \phi \to YC$ of $\P \C$ is postulated if and only if:
\begin{enumerate}[(P1')]
\item[(P1')] The family $(r_j \colon B_j \to C \mid j \in J)$ is stably
    effective-epimorphic in $\C$;
\item[(P2')] For all $j, k \in J$, the family $(\ell_z \colon L_z \to B_j
    \times_C B_k \mid z \in ZZ(j,k))$ is stably effective-epimorphic in
    $\C$.
\end{enumerate}
\end{Prop}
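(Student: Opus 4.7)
The plan is to translate each of the two postulatedness conditions (P1) and (P2) for $f \colon \phi \to YC$ separately into the elementary conditions (P1') and (P2'), working throughout with respect to the given presentation~\eqref{eq:presentation} of $\phi$.

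For (P1) $\Leftrightarrow$ (P1'), I first identify the image factorisation $\phi \twoheadrightarrow \im(f) \rightarrowtail YC$. Since $q \colon \sum_j YB_j \twoheadrightarrow \phi$ is a regular epimorphism, the image of $f$ coincides with the image of the composite $\sum_j Yr_j \colon \sum_j YB_j \to YC$; and this latter image is the sieve on $C$ generated by the family $(r_j \colon B_j \to C \mid j \in J)$. A monomorphism $S \rightarrowtail YC$ with $S$ a sieve is final in $\P \C$ precisely when $S$ is an effective-epimorphic sieve on $C$---for its finality says that the unique cocone $S \Rightarrow \Delta C$ exhibits $C$ as the colimit of $S$---and is stably final precisely when $S$ is stably effective-epimorphic. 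Thus (P1) translates directly into (P1').

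For (P2) $\Leftrightarrow$ (P2'), I use Lemma~\ref{lem:stablyfinal} to reduce stable finality of $\delta \colon \phi \to \phi \times_{YC} \phi$ to the finality of each of its pullbacks along maps $YD \to \phi \times_{YC} \phi$. A map $YD \to \phi \times_{YC} \phi$ corresponds to a pair of elements of $\phi(D)$ with equal images in $YC(D) = \C(D,C)$; and since $\phi$ was presented as a reflexive-coequaliser-like quotient of $\sum_j YB_j$, such an element can be represented by some $(j, h \colon D \to B_j)$, with $(j, h) \sim (k, h')$ in $\phi(D)$ iff they are connected by a chain of applications of $s$ and $t$---that is, iff there exist a zig-zag $z \in ZZ(j,k)$ and a factorisation $D \to L_z$ of $(h,h')$ through $\ell_z$. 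The central computation is now to show that the pullback of $\delta$ along the classifying map of such a pair $((j,h),(k,h'))$ is precisely the sieve on $D$ generated by the family of pullbacks of the $\ell_z \colon L_z \to B_j \times_C B_k$ along $(h,h') \colon D \to B_j \times_C B_k$, for $z$ ranging over $ZZ(j,k)$: for an arrow $u \colon D' \to D$ lies in this pullback sieve just when $(j, hu) = (k, h'u)$ in $\phi(D')$, and the quoted description of $\phi(D')$ shows this to be equivalent to the existence of a factorisation $D' \to L_z$ for some $z$.

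Granting this computation, the finality of such a pullback sieve amounts to the family $(\ell_z \times_{B_j \times_C B_k} D \mid z \in ZZ(j,k))$ being effective-epimorphic on $D$, and stable finality under arbitrary pullback along $YD \to \phi \times_{YC} \phi$---for all $D$, $j$, $k$, and all $(h,h')$---amounts precisely to each family $(\ell_z \colon L_z \to B_j \times_C B_k \mid z \in ZZ(j,k))$ being stably effective-epimorphic in $\C$, which is (P2'). I expect the main obstacle will be the bookkeeping in this last step: carefully matching up the pullback of $\delta$ along a generic map $YD \to \phi \times_{YC} \phi$ with the sieve generated by the pulled-back $\ell_z$'s, using the explicit zig-zag description of the equivalence relation defining $\phi$, and then verifying that the various generic-point reformulations (arbitrary $D$ and $(h,h')$, versus arbitrary pullback in the slice over $\phi \times_{YC} \phi$) are interchangeable thanks to Lemma~\ref{lem:stablyfinal}.
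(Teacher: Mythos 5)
Your proposal is correct and follows essentially the same route as the paper: the image computation for (P1) $\Leftrightarrow$ (P1'), and for (P2) $\Leftrightarrow$ (P2') the identification of ``lies in the sieve generated by the $\ell_z$'' with ``becomes equal in $\phi$'' via the zig-zag description of the coequaliser, combined with Lemma~\ref{lem:stablyfinal} and the joint epimorphy of the $q_j$ to reduce stable finality of $\delta$ to pullbacks along representables. The only cosmetic difference is that the paper packages the pullback of $\delta$ along $q_j \times_{YC} q_k$ as an intermediate subobject $\theta_{j,k}$ of $Y(B_j \times_C B_k)$ before pulling back further, whereas you pull back along a generic $YD \to \phi \times_{YC} \phi$ in one step.
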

Recall that a family of maps $(f_i \colon U_i \to V)$ is
\emph{effective-epimorphic} if it exhibits $V$ as the colimit of the sieve
generated by the $f_i$'s, and is \emph{stably effective-epimorphic} if every
pullback of it along a map $V' \to V$ is effective-epimorphic. The stably
effective-epimorphic families are the covering families for the canonical
topology on $\C$---the largest topology for which each representable functor is
a sheaf---and so, comparing this result with~\cite[Section
1]{Kock1989Postulated}, we deduce as claimed that postulatedness in our sense
coincides with postulatedness in the sense of~\cite{Kock1989Postulated} with
respect to the canonical topology.
\begin{proof}
We show first that (P1) $\Leftrightarrow$ (P1'). Since $q \colon \sum_{j \in J}
YB_j \twoheadrightarrow \phi$ is epimorphic, the images of $f$ and  $fq$
coincide. But since $fq = \spn{Yr_j \mid j \in J} \colon \sum_j YB_j \to YC$,
the image of the latter is the sieve on $C$ generated by the family $(r_j
\colon B_j \to C \mid j \in J)$; so by Lemma~\ref{lem:stablyfinal}, to say that
the image of $f$ is stably final, which is (P1), is equally well to say that
$(r_j \mid j \in J)$ is stably effective-epimorphic, which is (P1').

We now show that (P2) $\Leftrightarrow$ (P2'). First we characterise the sieve
generated by the family $(\ell_z \mid z \in ZZ(j,k))$. By definition, a
morphism $(g,h) \colon X \to B_j \times_C B_k$ lies in this sieve just when it
factorises through some $\ell_z$; that is, just when there is a zig-zag of the
form~\eqref{eq:zigzag}, and an extension of the pair $(g,h)$ to a cone over
this zig-zag. But by virtue of the way that coequalisers are computed in
$\cat{Set}$, this is equally well to say that, on considering the coequaliser
\begin{equation*}
    \cd{\textstyle\sum_{i \in I} \C(X,A_i) \ar@<3pt>[r]^-{s_X} \ar@<-3pt>[r]_-{t_X} & \textstyle\sum_{j \in J} \C(X,B_j) \ar@{->>}[r]^-{q_X} & \phi(X)\rlap{ ,}}
\end{equation*}
the elements $(j,g)$ and $(k,h)$ of the central set have the same image under
$q_X$; which is equally well to say that the map $Y(g, h) \colon YX \to Y(B_j
\times_C B_k)$ factors through the subobject $\theta_{j, k} \colon YB_j
\times_\phi YB_k \rightarrowtail Y(B_j \times_C B_k)$ induced by the universal
property of pullback in the diagram
\begin{equation*}
    \cd{
        YB_j \times_\phi YB_k \ar@{.>}[dr]^{\theta_{j,k}} \ar@/^1em/[drr]^{\pi_2} \ar@/_1em/[ddr]_{\pi_1} \\
        & Y(B_j \times_C B_k) \ar[r]^-{Y\pi_1} \ar[d]_{Y\pi_2} & YB_j \ar[d]^{Yr_j} \\
        & YB_k \ar[r]_{Yr_k} & YC\rlap{ .}
    }
\end{equation*}
We have thus shown that $\theta_{j,k}$ is the image of $(\ell_z \mid z \in
ZZ(j,k))$; and so by Lemma~\ref{lem:stablyfinal}, to say that (P2') holds is to
say that $\theta_{j,k}$ is stably final for all $j,k \in J$. We now show that
this latter condition is equivalent to (P2); that is, to $\delta \colon \phi
\to \phi \times_{YC} \phi$ being stably final. Now for each $j, k \in J$, the
map $\theta_{j,k}$ is the pullback of $\delta$ along $q_j \times_{YC} q_k$, so
that if $\delta$ is stably final, then each $\theta_{j,k}$ is too. If
conversely each $\theta_{j,k}$ is stably final, then by
Lemma~\ref{lem:stablyfinal}, $\delta$ will be stably final as soon as every
pullback of it along a map $(h,k) \colon YD \to \phi \times_{YC} \phi$ is
final. For any such map we have, since the family $(q_j \mid j \in J)$ is
jointly epimorphic, factorisations $h = q_j u$ and $k = q_k v$ for some $j, k
\in J$ and $(u, v) \colon YD \to Y(B_j \times_C B_k)$; whence the pullback
$(h,k)^\ast(\delta)$ is in fact a pullback of $\theta_{j,k}$, and so indeed
final.
\end{proof}
We give one final formulation of postulatedness; this is the most useful in
practice.
\begin{Prop}\label{prop:postulateduseful}
The morphism $f \colon \phi \to YC$ of $\P \C$ is postulated if and only if it
is stably final and satisfies (P2').
\end{Prop}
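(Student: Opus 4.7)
The plan is to prove both directions by reducing everything to the already-established equivalence $(P2) \Leftrightarrow (P2')$ from the proof of Proposition~\ref{prop:postulatedkock}, together with an image-factorisation argument that trades the abstract axiom $(P1)$ for the weaker-looking hypothesis that $f$ itself is stably final.

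For the forward direction, suppose $f$ is postulated. Proposition~\ref{prop:postulatedfinal} immediately gives that $f$ is stably final, while the equivalence $(P2) \Leftrightarrow (P2')$ established inside Proposition~\ref{prop:postulatedkock} turns $(P2)$ into $(P2')$; so both clauses hold.

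For the reverse direction, assume $f$ is stably final and satisfies $(P2')$. Again invoking $(P2') \Leftrightarrow (P2)$, we have $(P2)$ for free, and the whole matter is to derive $(P1)$, i.e.\ to show that the image inclusion $m\colon \im(f) \rightarrowtail YC$ is stably final. By Lemma~\ref{lem:stablyfinal}, it suffices to check finality of every pullback $u^\ast(m)\colon u^\ast(\im(f)) \to YA$ of $m$ along a morphism $u\colon YA \to YC$ from a representable. Factor $f = me$ as (epi, mono) in $\P\C$; since image factorisations in $\P\C$ are stable under pullback (the same fact invoked in Proposition~\ref{prop:finalispostulated}), we obtain $u^\ast(f) = u^\ast(m)\circ u^\ast(e)$ with $u^\ast(m)$ mono and $u^\ast(e)$ epi, so that $u^\ast(m)$ is the image of $u^\ast(f)$. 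Stable finality of $f$ provides that $u^\ast(f)$ is final.

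It remains to observe the elementary fact that \emph{whenever $g = nk$ in $\P\C$ with $k$ epic and $g$ final, $n$ must be final}: given a representable $YD$ and any $\beta\colon \mathrm{cod}(k) \to YD$, finality of $g$ gives a unique $\gamma$ with $\gamma g = \beta k$, whence $\gamma n k = \beta k$ and $\gamma n = \beta$ by epi-ness of $k$, with uniqueness of $\gamma$ propagating backwards. Applying this to $u^\ast(f) = u^\ast(m)\,u^\ast(e)$ shows $u^\ast(m)$ is final, hence $m$ is stably final. Combined with $(P2)$, this yields $(P1)$ and thus that $f$ is postulated.

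The only subtlety I anticipate is the appeal to stability of epi--mono factorisations under pullback in $\P\C$, since for large $\C$ the category of small presheaves is not a topos. However this is exactly the stability fact already used in Proposition~\ref{prop:finalispostulated}, so no new work is needed; if one wished to be fully explicit, one could reduce to the small case via Proposition~\ref{prop:keylemma}, choosing a small $\L \hookrightarrow \C$ witnessing smallness of $\phi$, since all of $f$, $m$, $e$ and the relevant pullbacks with representable codomain live inside $\P\L$, which is a Grothendieck topos.
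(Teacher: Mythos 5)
Your proposal is correct and follows essentially the same route as the paper: the forward direction is identical (Propositions~\ref{prop:postulatedfinal} and~\ref{prop:postulatedkock}), and for the converse the paper likewise deduces (P1) from stable finality of $f$ by observing that image factorisations in $\P\C$ are stable under pullback and that the image of a final map is final, your version merely spelling out these two facts (the reduction to representable domains via Lemma~\ref{lem:stablyfinal} and the epi-cancellation argument) in more detail than the paper does.
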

\begin{proof}
If $f$ is postulated, then it is stably final by
Proposition~\ref{prop:postulatedfinal}, and satisfies (P2') by
Proposition~\ref{prop:postulatedkock}. Conversely, if $f$ is stably final, then
so is its image, since image factorisations are stable under pullback in $\P
\C$, which verifies (P1); now if it also verifies (P2') then it is postulated
by Proposition~\ref{prop:postulatedkock}.
\end{proof}
We conclude this section with an application of the preceding results; we will
use them to reconstruct the characterisation of adhesive categories given in
Section~\ref{subsec:adhesive}. Recall that $\Phi_\mathrm{adh}$ is the class of
lex-weights such that $\Phi_\mathrm{adh}$-lex-cocomplete categories are those
admitting pushouts along monomorphisms. By Theorem~\ref{prop:phiexactsetchar}
and Proposition~\ref{prop:postulateduseful}, such a category is
$\Phi_\mathrm{adh}$-exact if and only if pushouts along monomorphisms are
stable under pullback, with the corresponding final morphisms of $\P \C$
satisfying condition (P2'). To analyse this latter condition further, let
\begin{equation}\label{eq:thepushoutsquare}
    \cd{
        C \ar[r]^f \ar@{ >->}[d]_m & B \ar[d]^n \\
        A \ar[r]_g  & D
    }
\end{equation}
be a typical pushout along a monomorphism in $\C$, and let $q \colon \phi \to
YD$ be the corresponding final morphism in $\P \C$. We may present $\phi$ as
the coequaliser of the pair $(\iota_1.Ym, \iota_2.Yf) \colon YC
\rightrightarrows YA + YB$, and with respect to this presentation, condition
(P2') for the postulatedness of $q$ breaks up into four clauses; we now
consider these in turn.
\begin{enumerate}[(i)]
\item \emph{The family $(\ell_z \colon L_z \to B \times_D B \mid z \in
    ZZ(B,B))$ should be stably effective-epimorphic.} Every zig-zag in
    $ZZ(B,B)$ is given by zero or more copies of the zig-zag
\begin{equation}\label{eq:thezigzag}
    \cd{
    & C \ar[dl]_f \ar@{ >->}[dr]^m & & C \ar@{ >->}[dl]_m \ar[dr]^f \\
    B & & A & & B
    }
\end{equation}
    placed side by side. From the case $n = 0$, we see that the diagonal
    $\delta \colon B \to B \times_D B$ is in the family $(\ell_z)$. But
    since $m$ is monic, the span composite of the displayed zig-zag has
    both projections onto $B$ equal, and it follows that the span composite
    of every $z \in ZZ(B,B)$ has both projections onto $B$ equal: in other
    words, that every $\ell_z \colon L_z \to B \times_D B$ factors through
    $\delta$. Thus to say that the family $(\ell_z)$ is stably
    effective-epimorphic is equally well to say that the singleton family
    $\delta$ is so; but since $\delta$ is monomorphic, this is equivalent
    to saying that it is invertible, or in other words, that $n$ is monic.
\vskip0.5\baselineskip
\item \emph{The family $(\ell_z \colon L_z \to A \times_D B \mid z \in
    ZZ(A,B))$ should be stably effective-epimorphic.} Every zig-zag in
    $ZZ(A,B)$ is given by zero or more copies of the
    zig-zag~\eqref{eq:thezigzag} placed next to the span $m \colon A
    \leftarrowtail C \to B \colon f$. So in particular, $(m,f) \colon C \to
    A \times_D B$ is in the family $(\ell_z)$, and arguing as before, any
    other $\ell_z$ must factor through this one. So the stated condition is
    equivalent to the singleton family $(m,f)$ being stably
    effective-epimorphic, and since $(m,f)$ is monic (as $m$ is) this is in
    turn equivalent to $(m,f)$ being invertible; that is, to the
    pushout~\eqref{eq:thepushoutsquare} also being a
    pullback.\vskip0.5\baselineskip
\item \emph{The family $(\ell_z \colon L_z \to B \times_D A \mid z \in
    ZZ(B,A))$ should be stably effective-epimorphic.} This condition is
    clearly equivalent to the previous one.\vskip0.5\baselineskip
\item \emph{The family $(\ell_z \colon L_z \to A \times_D A \mid z \in
    ZZ(A,A))$ should be stably effective-epimorphic.} Every zig-zag in
    $ZZ(A,A)$ is given by zero or more copies of
\begin{equation*}
    \cd{
    & C \ar[dr]^f \ar@{ >->}[dl]_m & & C \ar@{ >->}[dr]^m \ar[dl]_f \\
    A & & B & & A
    }
\end{equation*}
    placed side by side. From the cases $n = 0, 1$, we see that $\delta
    \colon A \to A \times_D A$ and $m \times_n m \colon C \times_B C \to A
    \times_D A$ are in the family $(\ell_z)$, and arguing as before, any
    other $\ell_z$ must factor through $m \times_n m$. Thus the stated
    condition is equally that the pair of maps $\delta$ and $m \times_n m$
    should comprise a stably effective-epimorphic family. Since both are
    monomorphic, this is equally well to say that they are the stable
    pushout of their intersection: but this intersection is easily seen to
    be $C$, and so the condition is that
    \begin{equation}\label{eq:kerneldiag}
        \cd{
            C \ar[r]^-\delta \ar[d]_m & C \times_B C \ar[d]^{m \times_n m} \\
            A \ar[r]_-\delta & A \times_D A
        }
    \end{equation}
    should be a stable pushout. In fact it is enough merely that it is a
    pushout, as then it is a pushout along a monomorphism and so stable by
    assumption.
\end{enumerate}
In conclusion, we see that the finitely complete category $\C$ is
$\Phi_\mathrm{adh}$-exact just when pushouts along monomorphisms exist, are
stable, and are pullbacks, when monomorphisms are stable under pushout, and
when finally for every pushout square~\eqref{eq:thepushoutsquare}, the
corresponding square~\eqref{eq:kerneldiag} is also a pushout. Now we saw in the
proof of Proposition~\ref{prop:adhchar} that pushouts of monomorphisms are
monomorphisms provided that such pushouts are stable, so that this condition
can be omitted; moreover, Lemma 3.2 of~\cite{Lack2011An-embedding} shows that the
condition involving~\eqref{eq:kerneldiag} is also a consequence of the others.
Thus we conclude that $\C$ is $\Phi_\mathrm{adh}$-exact just when pushouts
along monomorphisms are stable and are pullbacks: which is what we proved in
Proposition~\ref{prop:adhchar}.

\section{Relative completions}\label{sec:relative}
In this final section, we return to the development, for a general $\V$, of the
theory of $\Phi$-exactness. Our goal is to describe circumstances under which
it is possible to construct the free $\Psi$-exact completion of a $\Phi$-exact
category. First we need to ascertain the circumstances under which it makes
sense even to speak of the free $\Psi$-exact completion of a $\Phi$-exact
category; to which end, we introduce the following notation.

Given classes of lex-weights $\Phi$ and $\Psi$, we write $\Phi \leqslant \Psi$
to indicate that the forgetful functor $\Psi\text-\cat{EX} \to \cat{LEX}$
factors through $\Phi\text-\cat{EX}$; which is to say that every $\Psi$-exact
category is $\Phi$-exact, and every $\Psi$-exact functor $\Phi$-exact. There
are various ways of characterising this ordering.
\begin{Prop}\label{prop:charorderrelation}
Given classes of lex-weights $\Phi$ and $\Psi$, the following are equivalent:
\begin{enumerate}
\item $\Phi \leqslant \Psi$;
\item $\Phi \subseteq \sat \Psi$;
\item $\sat \Phi \subseteq \sat \Psi$;
\item $\app \Phi \C \subseteq \app \Psi \C$ for all small, finitely
    complete $\C$;
\item $\app \Phi \C \subseteq \app \Psi \C$ for all finitely complete $\C$.
\end{enumerate}
\end{Prop}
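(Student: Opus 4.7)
The plan is to establish the cycle $(4) \Rightarrow (2) \Leftrightarrow (3)$, together with $(2) \Rightarrow (5) \Rightarrow (4)$, and then $(5) \Leftrightarrow (1)$.

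The implications within $(2)$--$(5)$ are essentially formal. For $(4) \Rightarrow (2)$, taking $\C$ to be a small finitely complete $\K$, any $\phi \in \Phi[\K]$ is the $\Phi$-lex-colimit $\phi \star Y$ of representables and so lies in $\app \Phi \K \subseteq \app \Psi \K$, giving $\Phi \subseteq \sat \Psi$. The equivalence $(2) \Leftrightarrow (3)$ is immediate from the fact that saturation is a monotone idempotent closure operator on classes of lex-weights. For $(2) \Rightarrow (5)$, I apply Proposition~\ref{prop:phicchar}, which characterises $\app \Phi \C$ as the smallest full, replete subcategory of $\P \C$ containing the representables and closed under finite limits and $\Phi$-lex-colimits. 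Now $\app \Psi \C$ satisfies the first two of these conditions; moreover, by the ``combined submonad condition'' reformulation found in the proof of Proposition~\ref{prop:phicchar}, it is closed in $\P \C$ under $\submonad \Psi$-lex-colimits, hence under $\sat \Psi$-lex-colimits, and so by $(2)$ under $\Phi$-lex-colimits. Minimality then yields $(5)$, while $(5) \Rightarrow (4)$ is trivial.

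For $(5) \Rightarrow (1)$: given a $\Psi$-exact $\C$ with lex left adjoint $L_\Psi \dashv W_\Psi \colon \C \to \app \Psi \C$, condition $(5)$ furnishes a fully faithful inclusion $\iota \colon \app \Phi \C \hookrightarrow \app \Psi \C$ through which $W_\Psi = \iota \circ W_\Phi$. The composite $L_\Psi \circ \iota$ is then lex and, by fully-faithfulness of $\iota$, left adjoint to $W_\Phi$, so by Proposition~\ref{prop:charphilexcocomp}(7) $\C$ is $\Phi$-exact. For morphisms, Proposition~\ref{prop:charphilexmaps} says a $\Psi$-exact functor is $\sat \Psi$-lex-cocontinuous, hence by $(3)$ also $\sat \Phi$-lex-cocontinuous, and so $\Phi$-exact.

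The main obstacle, as expected, is $(1) \Rightarrow (5)$. The plan here is to observe that for any finitely complete $\C$, both $\app \Psi \C$ and the small-presheaf category $\P \C$ are $\Psi$-exact: the latter because it is trivially lex-reflective in itself, and so small-exact by Proposition~\ref{prop:charlexcocomp}(9), while any small-exact category is $\Psi$-exact via the pseudomonad inclusion $\submonad \Psi \hookrightarrow \plex$. Furthermore, the inclusion $J \colon \app \Psi \C \hookrightarrow \P \C$ preserves $\Psi$-lex-colimits by the closure assertion of Proposition~\ref{prop:phicchar}, so is $\Psi$-exact. Assumption $(1)$ now promotes all three of $\app \Psi \C$, $\P \C$, and $J$ from being $\Psi$-exact to being $\Phi$-exact. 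Since $J$ thereby preserves $\Phi$-lex-colimits (which exist in $\app \Psi \C$ because it is $\Phi$-exact), the subcategory $\app \Psi \C$ is closed in $\P \C$ under $\Phi$-lex-colimits. Combined with its containing the representables and being closed under finite limits, the minimality in Proposition~\ref{prop:phicchar} yields $\app \Phi \C \subseteq \app \Psi \C$, which is $(5)$.
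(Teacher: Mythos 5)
Your proposal is correct and follows essentially the same route as the paper: the key steps --- deducing $(5)$ from $(1)$ by observing that $\app \Psi \C$ and its inclusion into $\P \C$ become $\Phi$-exact and invoking the minimality in Proposition~\ref{prop:phicchar}, and recovering $(1)$ from the containment of completions by composing the lex reflector with the fully faithful inclusion $\app \Phi \C \hookrightarrow \app \Psi \C$ --- are exactly those of the paper's proof, merely arranged in a slightly different cycle of implications and with some details (e.g.\ the adjunction $L_\Psi \iota \dashv W_\Phi$) spelled out more explicitly.
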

\begin{proof}
If (1) holds, then for any finitely complete $\C$, the category $\app \Psi \C$
and inclusion $\app \Psi \C \to \P \C$ are both $\Phi$-exact, since
$\Psi$-exact. Thus $\app \Psi \C$ is closed in $\P \C$ under finite limits and
$\Phi$-lex-colimits, whence $\app \Phi \C \subseteq \app \Psi \C$ by
Proposition~\ref{prop:phicchar}. Thus (1) $\Rightarrow$ (5); and trivially (5)
$\Rightarrow$ (4) $\Rightarrow$ (3) $\Rightarrow$ (2), so it remains to show
(2) $\Rightarrow$ (1). As it is clear from
Propositions~\ref{prop:charphilexcocomp} and~\ref{prop:charphilexmaps} that
$\Psi\text-\cat{EX} = \sat \Psi\text-\cat{EX}$, it is enough to show that if
$\Phi \subseteq \Psi$ then $\Phi \leqslant \Psi$. But if $\Phi \subseteq \Psi$,
then clearly $\app \Phi \C \subseteq \app \Psi \C$ for all finitely complete
$\C$, so that if $\C \to \app \Psi \C$ has a lex left adjoint, then so does $\C
\to \app \Phi \C$. Thus every $\Psi$-exact category is also $\Phi$-exact, and
clearly any $\Psi$-exact functor is $\Phi$-exact, so that $\Phi \leqslant \Psi$
as desired.
\end{proof}
Taking $\Phi = \{\psi\}$ in the above, we immediately deduce the following
result, which can be seen as the analogue, for our theory, of~\cite[Theorem
5.1]{Albert1988The-closure}.
\begin{Cor} If $\Psi$ is a class of
lex-weights, then $\psi \in \Psi^\ast$ if and only if every $\Psi$-exact
category is also $\{\psi\}$-exact, and every $\Psi$-exact functor is
$\{\psi\}$-exact.
\end{Cor}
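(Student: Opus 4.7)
The plan is to deduce this corollary as a direct specialisation of Proposition~\ref{prop:charorderrelation}. Taking $\Phi$ to be the singleton class $\{\psi\}$, the condition $\Phi \leqslant \Psi$ unfolds, by definition of the ordering, to the conjunction that every $\Psi$-exact category is $\{\psi\}$-exact and every $\Psi$-exact functor is $\{\psi\}$-exact; while the condition $\Phi \subseteq \sat \Psi$ unfolds to the membership statement $\psi \in \sat \Psi = \Psi^\ast$. The equivalence (1) $\Leftrightarrow$ (2) of the proposition therefore yields precisely the claim.

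In more detail, I would first spell out the two translations just indicated. For the ``only if'' direction, I would assume $\psi \in \Psi^\ast$, so that $\{\psi\} \subseteq \sat \Psi$, which is clause (2) of Proposition~\ref{prop:charorderrelation} applied with $\Phi = \{\psi\}$; the implication (2) $\Rightarrow$ (1) of that proposition then gives $\{\psi\} \leqslant \Psi$, which is exactly the assertion that every $\Psi$-exact category is $\{\psi\}$-exact and every $\Psi$-exact functor is $\{\psi\}$-exact. For the ``if'' direction, the assumed conditions are by definition $\{\psi\} \leqslant \Psi$, so that clause (1) of the proposition holds; the implication (1) $\Rightarrow$ (2) then gives $\{\psi\} \subseteq \sat \Psi$, that is, $\psi \in \Psi^\ast$.

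No genuine obstacle arises: all the substantive work has already been done in establishing Proposition~\ref{prop:charorderrelation}, and in particular in the argument that closure of $\app \Psi \C$ under finite limits and $\Phi$-lex-colimits (witnessed by the fact that $\app \Psi \C \hookrightarrow \P \C$ is itself $\Phi$-exact whenever $\Phi \leqslant \Psi$) forces $\app \Phi \C \subseteq \app \Psi \C$ via Proposition~\ref{prop:phicchar}. The corollary is thus a one-line derivation, and I would present it as such with a brief parenthetical comparison to the analogous result \cite[Theorem 5.1]{Albert1988The-closure} characterising the saturation of a class of ordinary weights, to emphasise that the ``lex world'' analogue takes the same form.
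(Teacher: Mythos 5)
Your proposal is correct and matches the paper's own derivation exactly: the paper obtains this corollary by taking $\Phi = \{\psi\}$ in Proposition~\ref{prop:charorderrelation} and reading off the equivalence of clauses (1) and (2), which is precisely the translation you spell out. The only difference is that the paper states this in one line rather than unpacking the two directions, but the content is identical.
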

Whenever $\Phi \leqslant \Psi$, we have a forgetful $2$-functor
$\Psi\text-\cat{EX} \to \Phi\text-\cat{EX}$; and we now investigate the extent
to which this has a left biadjoint. We saw in Corollary~\ref{cor:biadjoint}
that such a biadjoint exists when $\Phi$ is the minimal class of lex-weights,
and $\Psi$ arbitrary; and we next shall consider the other extremal case, in
which $\Psi$ is maximal, and $\Phi$ arbitrary. In other words, we wish to
describe the free small-exact completion of the $\Phi$-exact $\C$.

For reasons of size, we cannot expect always to be able to do this; but we may
do so, at least, whenever $\C$ is small. For such a $\C$, we will construct its
small-exact completion as a suitable lex-reflective subcategory of $[\C^\op,
\V]$, into which $\C$ will embed via the (restricted) Yoneda embedding.
Certainly this embedding will preserve finite limits; we wish it also to
preserve $\Phi$-lex-colimits. But this is equally well, by
Proposition~\ref{prop:charphilexmaps}, to ask that it should preserve all $\sat
\Phi$-lex-colimits; which in turn is equivalent to the requirement that every
$F \colon \C^\op \to \V$ in our subcategory should send $\sat
\Phi$-lex-colimits in $\C$ to limits in $\V$. Let us therefore write $\P_\Phi
\C$ for the full subcategory of $[\C^\op, \V]$ spanned by the functors with
this property, and, recognising that every representable lies in $\P_\Phi \C$,
write $V \colon \C \to \P_\Phi \C$ for the restricted Yoneda embedding. The
first step in showing that this constitutes a small-exact completion of $\C$ is
to prove:

\begin{Prop}\label{prop:PPhilexreflective}
If $\C$ is small and $\Phi$-exact, then $\P_\Phi \C$ is lex-reflective in
$[\C^\op, \V]$, and hence small-exact.
\end{Prop}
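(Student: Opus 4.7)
The plan is to realize $\P_\Phi \C$ as an orthogonality class in the locally presentable category $[\C^\op, \V]$, reduce the generating family of maps to a small set, and then invoke the Appendix's machinery on lex-localizations to obtain lex-reflectivity.

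First I would unpack the definition of $\P_\Phi \C$ into an orthogonality condition. A presheaf $F \colon \C^\op \to \V$ sends the colimit $\psi \star D$ in $\C$ to the limit $\{\psi, FD\}$ in $\V$ if and only if it is orthogonal in $[\C^\op, \V]$ to the canonical comparison map $\alpha_{\psi,D} \colon \psi \star YD \to Y(\psi \star D)$. Here $\K$ is small and finitely complete, $\psi \in \app \Phi \K$, and $D \colon \K \to \C$ is lex; the colimit $\psi \star D$ exists in $\C$ by Proposition~\ref{prop:charphicocomp} together with the $\Phi$-exactness of $\C$. Thus $\P_\Phi \C = \Sigma^\perp$, where $\Sigma$ is the class of all such $\alpha_{\psi, D}$.

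Next I would address the size issue. The category $[\C^\op, \V]$ is locally presentable since $\C$ is small and $\V$ is locally finitely presentable, but $\Sigma$ is a priori a proper class. We may replace it by a small subset $\Sigma_0$ with $\Sigma_0^\perp = \Sigma^\perp$: on factoring each lex $D \colon \K \to \C$ through the finite-limit closure of its image in $\C$, we may assume that $\K$ is a small finite-limit-closed subcategory of $\C$, of which there are only a small set; and for each such $\K$, Proposition~\ref{prop:keylemma} together with the smallness of $\K$ bounds the relevant $\psi \in \app \Phi \K$ by a small set.

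The main obstacle is then to show that the resulting localization is \emph{lex}; that is, that the reflector $r \colon [\C^\op, \V] \to \P_\Phi \C$ preserves finite limits. For this I would invoke the Appendix's technical results on lex-localizations of locally presentable categories. Concretely, lex-ness amounts to the class of morphisms inverted by $r$ being closed under pullback in $[\C^\op, \V]$; this is where the $\Phi$-exactness of $\C$ enters, through the lex left adjoint $L \colon \app \Phi \C \to \C$, which ensures that finite limits in $\C$ interact appropriately with $\sat \Phi$-lex-colimits. Since both finite limits and the relevant colimits are computed pointwise in $[\C^\op, \V]$, this compatibility transfers up to the ambient presheaf category. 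The final claim is then automatic: as $[\C^\op, \V] = \P \C$ with $\C$ small and finitely complete, $\P_\Phi \C$ is lex-reflective in $\P \C$, which by Proposition~\ref{prop:charlexcocomp}(9) is precisely the criterion for small-exactness.
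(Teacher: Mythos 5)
Your reduction of $\P_\Phi\C$ to an orthogonality class is correct as far as it goes ($F$ sends $\psi\star D$ to a limit precisely when it is orthogonal to the comparison $\psi\star YD\to Y(\psi\star D)$), but the proposal has a genuine gap at its crux: the left exactness of the reflector. The Appendix results you invoke do not supply this. Proposition~\ref{prop:lexreflaccessible} says that a reflector \emph{already known} to be lex is accessible; Corollary~\ref{cor:smalllexrefl} bounds the number of lex-reflective subcategories; and Proposition~\ref{prop:lexrefl} closes the lex-reflective subcategories under intersection. None of them gives a criterion for an orthogonality localisation $\Sigma^\perp$ to have a \emph{lex} reflector, and your sentence ``lex-ness amounts to the class of morphisms inverted by $r$ being closed under pullback \dots this compatibility transfers up to the ambient presheaf category'' is an assertion of exactly the point at issue, not an argument. (In the case $\V=\cat{Set}$ one could try to manufacture a Grothendieck topology from the maps $\alpha_{\psi,D}$ and appeal to the left exactness of sheafification --- this is essentially the postulated-colimits analysis of Section~5 --- but that machinery is not available for general $\V$, and even for $\cat{Set}$ it requires the stability analysis you have omitted.)

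The paper's proof is structured precisely so as to get lex-ness for free: it transports the problem along the adjoint string $\Sigma_L\dashv\Sigma_W\dashv\Delta_W\dashv\Pi_W$ between $[\C^\op,\V]$ and $[(\app\Phi\C)^\op,\V]$, identifies $\P_\Phi\C$ with the intersection $\Ss\cap\T$ of the essential images of $\Sigma_W$ and $\Pi_W$ --- each of which is lex-reflective because it comes equipped with an explicit left exact left adjoint ($\Sigma_L$, using the lex reflector $L$ supplied by $\Phi$-exactness, and $\Delta_W$ respectively) --- and only then invokes Proposition~\ref{prop:lexrefl} to conclude that the intersection is lex-reflective. If you want to keep your route, that is the missing ingredient you would have to replace. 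A secondary issue: your claim that smallness of $\K$ bounds the relevant $\psi\in\app\Phi\K$ by a small set is false in general --- $\app\Phi\K$ need not be small for small $\K$ when $\Phi$ is a large class (this is exactly the paper's definition of $\Phi$ being ``small''), which is why the paper argues first under the hypothesis that $\app\Phi\C$ is small and then handles general $\Phi$ by writing $\P_\Phi\C=\bigcap_{\phi\in\Phi}\P_{\{\phi\}}\C$ and appealing again to Proposition~\ref{prop:lexrefl}.
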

In proving this proposition, we will need to make use of a technical result; it
states that the full, replete, lex-reflective subcategories of $[\C^\op, \V]$
form a small, complete lattice, in which infima are given by intersection. In
the unenriched case, this result---or rather a generalisation of it---was
proved by Borceux and Kelly in~\cite{Borceux1987On-locales}; we recall their
proof, indicating its adaptation to the enriched setting, as
Proposition~\ref{prop:lexrefl} below.
\begin{proof} We proceed first under the assumption that $\app \Phi
\C$ is small. In this case, taking $L \colon \app \Phi \C \to \C$ to be a left
exact left adjoint for $W \colon \C \to \app \Phi \C$, we may consider the
following string of adjunctions
\begin{equation*}
    \cd[@C+5em]{
        [\C^\op, \V]
        \ar@<8pt>[r]|-{\Sigma_W \defeq \Lan_{W^\op}}
        \ar@<16pt>@{}[r]|-{\bot}
        \ar@<1pt>@{}[r]|-{\bot}
        \ar@<-14pt>@{}[r]|-{\bot}
        \ar@<-20pt>[r]_-{\Pi_W \defeq \Ran_{W^\op}} &
        [(\app \Phi\C)^\op, \V]\rlap{ .}
        \ar@<8pt>[l]|-{\Delta_W \defeq [W^\op, 1]}
        \ar@<-21pt>[l]_-{\Sigma_L \defeq \Lan_{L^\op}}
    }
\end{equation*}
Each of the functors appearing in it is left exact, the lower three since they
are right adjoints, and $\Sigma_L$ because $L$ is. Since $W$ is fully faithful,
so are $\Sigma_W$ and $\Pi_W$, and their essential images $\Ss$ and $\T$ are both therefore lex-reflective subcategories of $[(\app \Phi \C)^\op, \cat{Set}]$. We shall show that the intersection $\Ss \cap \T$ is also lex-reflective, and that it is equivalent to $\P_\Phi \C$; from this the result  then follows.

Since $\Sigma_W$ is fully faithful, the unit $\eta$ of the adjunction
$\Sigma_W \dashv \Delta_W$ is invertible, and on composing its inverse with the
unit $\nu$ of the adjunction $\Delta_W \dashv \Pi_W$, we obtain a natural
transformation
\begin{equation*}
    \theta \defeq \Sigma_W \xrightarrow{\nu.1} \Pi_W \Delta_W \Sigma_W \xrightarrow{1.\eta^{-1}} \Pi_W\rlap{ .}
\end{equation*}
We claim that $F \in [\C^\op, \V]$ lies in $\P_\Phi \C$ just when $\theta_F$ is
invertible. Indeed, since $L \dashv W$, we have $\Sigma_W \cong [L^\op, 1]$;
whence $(\Sigma_W F)(\phi) \cong F(L\phi) \cong F(\phi \star 1_\C)$. On the
other hand, we have $    (\Pi_W F)(\phi) = \{\app\Phi\C(W, \phi), F\} =
\{[\C^\op, \V](Y, \phi) , F\} \cong \{\phi, F\}$, and it is straightforward to
verify that under these isomorphisms, the map $(\theta_F)_\phi$ is identified
with the canonical comparison map $F(\phi \star 1_\C) \to \{\phi, F\}$. So $F
\in \P_\Phi \C$ just when $\theta_F$ is invertible, as claimed.

As observed above, both $\Sigma_W$ and $\Pi_W$ are fully faithful, and have
left exact left adjoints; consequently, they determine idempotent left exact
monads $S$ and $T$ on $[(\app\Phi\C)^\op, \V]$, whose respective categories of
algebras---denoted by $\Ss$ and $\T$---are isomorphic to the replete images of
$\Sigma_W$ and $\Pi_W$ in $[(\app\Phi \C)^\op, \V]$. Moreover, to say that $F
\in [\C^\op, \V]$ lies in $\P_\Phi \C$ is by the above to say that $\theta_F$,
and hence $\nu_{(\Sigma_W F)} \colon \Sigma_W F \to \Pi_W \Delta_W \Sigma_W F =
T( \Sigma_W F)$ is invertible, which is equally well to say that $\Sigma_W
F$---which necessarily lies in $\Ss$---also lies in $\T$. Conversely, if $G \in
[(\app\Phi\C)^\op, \V]$ lies in $\Ss \cap \T$, then $GW^\op$ must lie in
$\P_\Phi \C$, since the map $\theta_{G W^\op}$ may be decomposed as the
composite
\begin{equation*}
    \Sigma_W \Delta_W G \xrightarrow{\epsilon} G \xrightarrow{\nu} \Pi_W \Delta_W G
\end{equation*}
of the counit of $\Sigma_W \dashv \Delta_W$ with the unit of $\Delta_W \dashv
\Pi_W$ at $G$; but both these maps are invertible, the first by the assumption
that $G \in \Ss$, and the second by the assumption that $G \in \T$.
Consequently, if we can show that $\Ss \cap \T$ is lex-reflective in
$[(\app\Phi\C)^\op, \V]$ via a reflector $\rho \colon 1 \to R$, we can conclude
that $\P_\Phi \C$ is lex-reflective in $[\C^\op, \V]$, via
\begin{equation*}
    1 \xrightarrow \eta \Delta_W \Sigma_W \xrightarrow{1.\rho.1} \Delta_W  R \Sigma_W\ \text.
\end{equation*}
But $\Ss$ and $\T$ are both lex-reflective in $[(\app\Phi\C)^\op, \V]$, and
thus, by Proposition~\ref{prop:lexrefl}, so is $\Ss \cap \T$. This proves that
$\appr \P \Phi \C$ is lex-reflective in $[\C^\op, \V]$ whenever $\app \Phi \C$
is small.

We now drop the assumption on $\app \Phi \C$. To show that $\appr \P \Phi \C$
is still lex-reflective, let us observe that for each $\phi \in \Phi$,
$\app{\{\phi\}} \C$ is certainly small, since $\C$ is, so that each
$\P_{\{\phi\}}\C$ is lex-reflective in $[\C^\op, \V]$ by the case just proved.
Thus by Proposition~\ref{prop:lexrefl}, $\E = \bigcap_\phi \P_{\{\phi\}}\C$ is
also lex-reflective in $[\C^\op, \V]$: we claim that it is in fact $\P_\Phi\C$,
which will complete the proof. Clearly $\P_\Phi \C \subseteq \E$; for the
converse, we must show that each $F \colon \C^\op \to \V$ in $\E$ sends
$\Phi^\ast$-lex-colimits in $\C$ to limits in $\V$. But this is equally well to
ask that the induced functor $Z \colon \C \to \E$ should preserve
$\Phi^\ast$-lex-colimits, which since $\C$ and $\E$ are both $\Phi$-exact, is
equally well to ask that $Z$ should preserve $\Phi$-lex-colimits; which in turn
is to ask that each $F \in \E$ should send $\Phi$-lex-colimits to limits. But
this is just to ask that for each $\phi \in \Phi$, each $F \in \E$ should send
$\{\phi\}$-lex-colimits to limits, which is so because $F \in \E \subseteq
\P_{\{\phi\}} \C$.
\end{proof}
\begin{Cor}\label{prop:requiredembedding}
If $\C$ is small and $\Phi$-exact, then ${V} \colon \C \to \P_\Phi \C$ is a
full, $\Phi$-exact embedding of $\C$ into a $\V$-topos.
\end{Cor}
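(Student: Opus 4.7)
The corollary bundles three claims: (a) $\P_\Phi\C$ is a $\V$-topos; (b) the functor $V$ is fully faithful; and (c) $V$ is $\Phi$-exact. The plan is to dispatch (a) and (b) immediately from what has already been set up, leaving (c) as the one place where the defining property of $\P_\Phi\C$ has to be invoked.

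For (a) I would argue as follows. Since $\C$ is small, $[\C^\op,\V]$ is a presheaf category on a small $\V$-category. Proposition~\ref{prop:PPhilexreflective} has just shown that $\P_\Phi\C$ is lex-reflective in $[\C^\op,\V]$, and by the definition of $\V$-topos given just before Proposition~\ref{prop:vtopsmallexact}, this exhibits $\P_\Phi\C$ as a $\V$-topos. For (b), $V$ is by construction the factorisation of the restricted Yoneda embedding $Y\colon \C\to[\C^\op,\V]$ through the full subcategory $\P_\Phi\C$; as $Y$ is fully faithful and the inclusion $\P_\Phi\C\hookrightarrow[\C^\op,\V]$ is fully faithful, so is $V$.

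For (c) I must verify that $V$ preserves finite limits and $\Phi$-lex-colimits. The former is automatic: the inclusion $\P_\Phi\C\hookrightarrow[\C^\op,\V]$ is lex (being a right adjoint), so finite limits in $\P_\Phi\C$ agree with those in the presheaf category, and $Y$ already preserves all limits. For the $\Phi$-lex-colimits, fix $\phi\in\Phi[\K]$ and a lex $D\colon\K\to\C$; I need the canonical comparison $\phi\star VD\to V(\phi\star D)$ to be invertible in $\P_\Phi\C$. By Yoneda, this is equivalent to the statement that, for each $F\in\P_\Phi\C$, the induced map
\[
F(\phi\star D)\;\longrightarrow\;[\K^\op,\V]\bigl(\phi,\,FD\bigr)
\]
is invertible in $\V$. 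But this is exactly the condition that $F$ send the colimit $\phi\star D$ to the corresponding limit in $\V$; since $\phi\star D$ is a $\Phi$-lex-colimit, and hence a $\sat\Phi$-lex-colimit, this holds by the very definition of $\P_\Phi\C$ as the full subcategory of $[\C^\op,\V]$ spanned by the functors that send $\sat\Phi$-lex-colimits in $\C$ to limits in $\V$.

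There is no real obstacle here — all of the work has been absorbed into Proposition~\ref{prop:PPhilexreflective} and into the definition of $\P_\Phi\C$. The only subtlety worth emphasising in the write-up is that one must check the comparison map in question lands in $\P_\Phi\C$ and is the one computed there; this is automatic because $\P_\Phi\C$ is reflective and so both sides of the comparison are simply the reflections of their pointwise counterparts in $[\C^\op,\V]$, where the Yoneda calculation above takes place.
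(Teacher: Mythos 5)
Your proposal is correct and follows essentially the same route as the paper: the topos claim and full faithfulness come straight from Proposition~\ref{prop:PPhilexreflective} and the Yoneda factorisation, and the preservation of $\Phi$-lex-colimits is verified by exactly the hom-object computation the paper writes out, reducing via Yoneda to the defining property that each $F \in \P_\Phi \C$ sends $\Phi$-lex-colimits to limits in $\V$. No gaps.
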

\begin{proof}
In light of the preceding proposition, it suffices to show that $V$ is a
$\Phi$-exact functor. Certainly it preserves finite limits; as for
$\Phi$-lex-colimits, we must show that if $\phi \in \Phi[\K]$ and $D \colon \K
\to \C$, then ${V}$ preserves the colimit $\phi \star D$: for which we
calculate that
\begin{align*}
    & \ \ \ \ \P_\Phi \C({V}(\phi \star D), F) \cong F(\phi \star D) \cong \{\phi, FD^\op\}
\\ & \cong [\K^\op, \V](\phi, FD^\op) \cong [\K^\op, \V](\phi, \P_\Phi \C({V}D\thg,
F))\rlap{ .} \qedhere
\end{align*}
\end{proof}
Given the preceding results, it is now an essentially standard argument to
prove that:
\begin{Thm}\label{prop:smallexactcompletion}
If $\C$ is small and $\Phi$-exact, then ${V} \colon \C \to \P_\Phi \C$ provides
a bireflection of $\C$ along the forgetful $2$-functor $\infty\text-\cat{EX}
\to \Phi\text-\cat{EX}$.
\end{Thm}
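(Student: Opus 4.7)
The plan is to prove that precomposition with $V$ yields an equivalence
\[
V^\ast \colon \infty\text-\cat{EX}(\P_\Phi\C, \D) \to \Phi\text-\cat{EX}(\C, \D)
\]
for every small-exact $\D$. For essential surjectivity, given a $\Phi$-exact $F \colon \C \to \D$, I would form $\tilde F \defeq \Lan_Y F \colon \P \C \to \D$, which is cocontinuous by construction and left exact by Proposition~\ref{prop:charlexcocomp}(5) applied to $\D$. Writing $R \dashv J \colon \P_\Phi \C \to \P \C$ for the lex reflection supplied by Proposition~\ref{prop:PPhilexreflective}, I set $\bar F \defeq \tilde F J$; this satisfies $\bar F V = \tilde F J V = \tilde F Y \cong F$, and is left exact as a composite of left exact functors.

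The substantive step is proving $\bar F$ cocontinuous. I would achieve this by showing that $\tilde F$ factors through $R$ up to isomorphism, so $\bar F R \cong \tilde F$; cocontinuity of $\bar F$ then follows immediately, since every colimit in $\P_\Phi \C$ is obtained by applying $R$ to a colimit in $\P \C$ and $\tilde F$ is cocontinuous. The factorisation in turn reduces to showing that $\tilde F$ inverts each reflection unit $\eta_\phi \colon \phi \to JR\phi$. The key observation is that $\P_\Phi \C$ is, by definition, the orthogonality class in $\P\C$ of the canonical morphisms
\[
\sigma_{\psi, D} \colon \psi \star YD \to Y(\psi \star D)
\]
as $\psi$ ranges over $\sat\Phi[\K]$ and $D \colon \K \to \C$ over lex functors with small domain, and that $\tilde F$ sends each such $\sigma_{\psi, D}$ to the canonical comparison $\psi \star FD \to F(\psi \star D)$, which is invertible because $F$ is $\sat\Phi$-exact by Proposition~\ref{prop:charphilexmaps}. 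The theory of lex-reflective subcategories of locally presentable categories developed in the Appendix then implies that any cocontinuous functor inverting the generating family $\{\sigma_{\psi, D}\}$ must invert every $R$-local map.

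For uniqueness up to isomorphism, and for fullness and faithfulness of $V^\ast$ on hom-categories, I would exploit the density of $V$ in $\P_\Phi\C$, which follows directly from the calculation $J\phi \star V \cong \phi$ (itself a consequence of $Y$'s density in $\P\C$ together with $RJ \cong 1$). Concretely, if $G, G' \colon \P_\Phi \C \to \D$ are small-exact with $GV \cong G'V \cong F$, then $GR, G'R \colon \P\C \to \D$ are cocontinuous functors whose restrictions along $Y$ are both isomorphic to $F$, hence are themselves isomorphic by the universal property of free cocompletion; postcomposing with $J$ and using $RJ \cong 1$ yields $G \cong G'$. An entirely parallel argument transfers natural transformations $GV \to G'V$ to natural transformations $G \to G'$.

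The principal obstacle is the factorisation of $\tilde F$ through $R$: making rigorous the claim that $R$-local maps in $\P\C$ are generated under the standard closure operations from the family of canonical morphisms $\sigma_{\psi, D}$, and that any cocontinuous functor inverting such generators inverts the whole saturated class. This is precisely the content of the technical machinery deferred to the Appendix on localisations of locally presentable categories, and is what supports the construction.
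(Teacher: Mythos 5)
Your overall architecture coincides with the paper's: the pseudoinverse of $V^\ast$ is left Kan extension along $V$, realised as $\bar F = (\Lan_Y F)\cdot J$; left exactness of $\bar F$ comes from Proposition~\ref{prop:charlexcocomp}(5) together with full faithfulness of the inclusion; and the equivalence is completed using full faithfulness and density of $V$ (your uniqueness argument via $GR \cong G'R$ and $RJ \cong 1$ is a slightly more explicit version of the paper's one-line appeal to density). The one substantive divergence is the proof that $\bar F$ is cocontinuous. The paper exhibits a right adjoint: the singular functor $D \mapsto \D(F\thg, D)$ is right adjoint to $\Lan_Y F$, and it lands in $\P_\Phi \C$ precisely because $F$ preserves $\Phi$-lex-colimits (equivalently $\sat \Phi$-lex-colimits); its corestriction is then right adjoint to $\bar F$, which is therefore cocontinuous. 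You instead try to show directly that $\Lan_Y F$ inverts the reflection units $\eta_\phi$, by invoking a principle that a cocontinuous functor inverting the generating maps $\sigma_{\psi,D}$ inverts every $R$-local map. That principle is \emph{not} what the Appendix proves---the Appendix establishes accessibility of localisations and closure of lex-reflective subcategories under intersection, not a generation statement for local equivalences---and as stated it also carries a size problem: when $\Phi$ is not small, the family $\{\sigma_{\psi,D}\}$ need not be a set, which is exactly the difficulty that forces the paper to prove Proposition~\ref{prop:PPhilexreflective} first under the hypothesis that $\app \Phi \C$ is small and then pass to an intersection over $\phi \in \Phi$. Fortunately your key claim is correct and needs none of this machinery: by adjointness, $\Lan_Y F$ inverts $\eta_\phi$ if and only if every $\D(F\thg, D)$ is orthogonal to $\eta_\phi$, i.e.\ if and only if every $\D(F\thg, D)$ lies in $\P_\Phi \C$---and this is immediate from $F$ being $\Phi$-exact. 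With that substitution your "principal obstacle" disappears and the proof closes up, becoming essentially the paper's.
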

Here we write $\infty\text-\cat{EX}$ for the $2$-category of small-exact
categories, small-exact functors and arbitrary natural transformations.
\begin{proof}
By the preceding two results, $\appr \P \Phi \C$ is small-exact and $V$ is
$\Phi$-exact; and so composition with ${V}$ induces, for any small-exact
category $\D$, a functor
\begin{equation*}
    {V}^\ast \colon \infty\text-\cat{EX}(\P_\Phi \C, \D) \to \Phi\text-\cat{EX}(\C, \D)
\end{equation*}
which we are to show is an equivalence. As is typical, we do this by exhibiting
as its pseudoinverse the functor which on objects sends $F \colon \C \to \D$ to
$\Lan_{V} F \colon \P_\Phi \C \to \D$. First we show that this is well-defined;
that is, that $\Lan_{V} F$ is a small-exact functor whenever $F$ is
$\Phi$-exact. First we note that $\Lan_{V} F$ preserves finite limits: indeed,
$\Lan_Y F \colon \P \C \to \D$ preserves finite limits since $\D$ is
small-exact, and now because the inclusion $I \colon \P_\Phi \C \to \P \C$ is
fully faithful, we have $\Lan_{V} F \cong (\Lan_Y F) . I$, the composite of two
finite-limit preserving functors, and so itself finite-limit preserving. It
remains to show that $\Lan_{V} F$ is cocontinuous. For this, observe that
$\Lan_Y F$ has as right adjoint the singular functor $\tilde F \colon \D \to \P
\C$ sending $D$ to $\D(F\thg, D)$; so that if we can show that $\tilde F$
factors through $\P_\Phi \C$ as $\tilde F = IR$, say, then we will have
$\Lan_{V} F \dashv R$ and so $\Lan_{V} F$ cocontinuous. But since $F$ preserves
$\Phi$-lex-colimits, $\D(F\thg, D)$ will certainly send them to limits in $\V$
for each $D$, and so we have the desired factorisation of $\tilde F$ through
$\P_\Phi \C$. We have therefore shown that $\Lan_{V}$ is a functor
$\Phi\text-\cat{EX}(\C, \D) \to \infty\text-\cat{EX}(\P_\Phi \C, \D)$, and it
remains to show that it is pseudoinverse to ${V}^\ast$. But since ${V}$ is
fully faithful, we have ${V}^\ast.\Lan_{V} \cong 1$; and since ${V}$ is dense,
we have $\Lan_{V} . {V}^\ast \cong 1$.
\end{proof}
Before continuing, let us use the preceding results to complete the proof of
Proposition~\ref{prop:setsingletonexact}, which we restate here as:
\begin{Prop}\label{prop:setsingletonexact2}
For $\Phi$ a class of lex-weights, a category $\C$ is $\Phi$-exact if and only
if it is $\{\phi\}$-exact for each $\phi \in \Phi$.
\end{Prop}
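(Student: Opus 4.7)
The forward direction is immediate: for each $\phi \in \Phi$, since $\{\phi\} \subseteq \Phi \subseteq \sat \Phi$, Proposition~\ref{prop:charorderrelation} yields $\{\phi\} \leqslant \Phi$, so any $\Phi$-exact $\C$ is $\{\phi\}$-exact. The content is in the converse, which I would prove by first treating small $\C$ and then reducing the general case to this one.

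Suppose first that $\C$ is small. Since $\C$ is $\{\phi\}$-exact by hypothesis, Proposition~\ref{prop:PPhilexreflective} (applied to the singleton class $\{\phi\}$) exhibits $\P_{\{\phi\}} \C$ as lex-reflective in $[\C^\op, \V]$ for each $\phi \in \Phi$. Proposition~\ref{prop:lexrefl} from the appendix then makes the intersection $\E \defeq \bigcap_{\phi \in \Phi} \P_{\{\phi\}} \C$ lex-reflective as well, so $\E$ is a $\V$-topos and in particular small-exact. By construction $\E$ consists of the presheaves sending $\Phi$-lex-colimits in $\C$ to limits in $\V$, whence the representables lie in $\E$, and the calculation performed in Corollary~\ref{prop:requiredembedding} shows the factored Yoneda embedding $V \colon \C \to \E$ to be a fully faithful $\Phi$-lex-cocontinuous functor into a small-exact (hence $\Phi$-exact) category. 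Theorem~\ref{thm:embedding}(3)$\Rightarrow$(4) then delivers $\Phi$-exactness of $\C$.

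For a general $\C$, I would verify condition~(4) of Proposition~\ref{prop:charphilexcocomp}: that for each lex $D \colon \K \to \C$ with $\K$ small, $\Lan_W D \colon \app \Phi \K \to \C$ exists and is lex. Both the existence of $\chi \star D$ at a single $\chi \in \app \Phi \K$ and the lex-comparison isomorphism at a finite diagram $H \colon \M \to \app \Phi \K$ depend, by the inductive construction of Proposition~\ref{prop:keylemma}, only on a small sub-class $\Phi_0 \subseteq \Phi$ of weights. Letting $\L \subseteq \C$ be the closure of $\im(D)$ under finite limits and $\Phi_0$-lex-colimits, the smallness of $\K$ and $\Phi_0$ makes $\L$ small. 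Now $\L$ is a small $\{\phi\}$-subcategory of $\C$ for each $\phi \in \Phi_0$, so Proposition~\ref{prop:smallsubphiexact} applied to the small class $\{\phi\}$ transfers $\{\phi\}$-exactness from $\C$ down to $\L$; the small case just proved, applied to $\L$ with class $\Phi_0$, then makes $\L$ itself $\Phi_0$-exact. A short induction along the construction of $\chi$ from representables---using lexness and $\Phi_0$-lex-cocontinuity of the inclusion $\L \hookrightarrow \C$---then transfers the relevant colimit and lex-comparison from $\L$ to $\C$.

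The main obstacle is the size bookkeeping in the general case: although $\Phi$ may be a proper class, each individual verification involved in $\Phi$-exactness depends on only small data, and Proposition~\ref{prop:smallsubphiexact} applied singleton-wise is precisely the tool that lets $\{\phi\}$-exactness descend from $\C$ to the small subcategory $\L$ on which the small case applies directly.
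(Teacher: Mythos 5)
Your forward direction and your treatment of the small case coincide with the paper's: the paper likewise reduces to a small category $\D$, observes that each $\P_{\{\phi\}}\D$ is lex-reflective in $[\D^\op,\V]$ by Proposition~\ref{prop:PPhilexreflective} applied to the singleton class, invokes Proposition~\ref{prop:lexrefl} to make the intersection lex-reflective, and concludes via Theorem~\ref{thm:embedding}. (The paper itself simply cites Proposition~\ref{prop:smallsubphiexact} for the reduction to small subcategories, even though $\Phi$ need not be small; your instinct to cut down to a small subclass $\Phi_0$ is a reasonable way to make that reduction honest.)

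The one step that does not work as written is the final ``short induction along the construction of $\chi$ from representables'' transferring the colimit from $\L$ to $\C$. The problem is that being a weighted colimit in $\C$ is a universal property tested against \emph{all} objects of $\C$, including those outside $\L$, and a structural induction internal to $\L$ cannot see these. Concretely: at a $\Phi_0$-lex-colimit step the inclusion $H \colon \L \hookrightarrow \C$ does preserve the colimit, but at a finite-limit step you would compute $H(\{\psi,\Xi\}\star D') \cong \{\psi?,\,\Xi?\star D\}$ using lexness of $H$, and to identify this with $\{\psi,\Xi\}\star D$ in $\C$ you would need precisely the lex-comparison isomorphism for $(\thg)\star D$ that you are trying to establish --- the induction is circular there. (This is the phenomenon flagged in Remark~\ref{rk:artificial}: the exactness conditions are exactly what force these finite-limit-built cocones to be colimiting.) The repair is available from your own setup: you have shown that every small, full, finite-limit- and $\Phi_0$-lex-colimit-closed subcategory of $\C$ is $\Phi_0$-exact, and $\C$ is $\Phi_0$-lex-cocomplete, so Proposition~\ref{prop:smallsubphiexact} applied to the \emph{small} class $\Phi_0$ yields directly that $\C$ is $\Phi_0$-exact; its proof handles the transfer by enlarging $\L$ to the closure of $\L\cup\{X\}$ for each test object $X$. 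Since every $\chi\in\app\Phi\K$, and every finite diagram therein, lives in $\app{\Phi_0}\K$ for some small $\Phi_0\subseteq\Phi$, this gives condition~(4) of Proposition~\ref{prop:charphilexcocomp} for $\Phi$ itself.
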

\begin{proof}
If $\C$ is $\Phi$-exact, then it is $\{\phi\}$-exact for each $\phi \in \Phi$
by Proposition~\ref{prop:charorderrelation}. Suppose conversely that $\C$ is
$\{\phi\}$-exact for each $\phi \in \Phi$. By
Proposition~\ref{prop:smallsubphiexact}, $\C$ will be $\Phi$-exact if we can
show every small, full, finite-limit- and $\Phi$-lex-colimit-closed subcategory
$\D \subseteq \C$ to be $\Phi$-exact. Fix such a $\D$. Now for each $\phi \in
\Phi$, $\D$ is $\{\phi\}$-exact, since $\C$ is, and so $\P_{\{\phi\}} \D$ is
lex-reflective in $[\D^\op, \V]$. As in the proof of
Proposition~\ref{prop:PPhilexreflective}, we therefore have $\P_\Phi(\D) =
\bigcap_\phi \P_{\{\phi\}}(\D)$ also lex-reflective in $[\D^\op, \V]$, so that
by Theorem~\ref{thm:embedding}, $\D$ is $\Phi$-exact as required.
\end{proof}
We now give our final result, which, for an arbitrary pair of classes $\Phi
\leqslant \Psi$, describes the $\Psi$-exact completion of the small
$\Phi$-exact $\C$. Given such a $\C$, we write $\appr \Psi \Phi \C$ for the
closure of $\C$ in $\P_\Phi \C$ under finite limits and $\Psi$-lex-colimits,
and
\begin{equation*}
    V = \C \xrightarrow{Z} \appr \Psi \Phi \C \xrightarrow{K} \P_\Phi \C
\end{equation*}
for the factorisation of $V$ this induces.
\begin{Thm}\label{thm:relativecompletion}
Let $\Phi \leqslant \Psi$ be classes of lex-weights, and let $\C$ be small and
$\Phi$-exact. Now $Z \colon \C \to \appr \Psi \Phi \C$ provides a bireflection
of $\C$ along the forgetful $2$-functor $\Psi\text-\cat{EX} \to
\Phi\text-\cat{EX}$.
\end{Thm}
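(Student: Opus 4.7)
The plan is to establish three claims: (a) $\appr \Psi \Phi \C$ is $\Psi$-exact; (b) $Z \colon \C \to \appr \Psi \Phi \C$ is $\Phi$-exact; and (c) for every $\Psi$-exact $\E$, precomposition with $Z$ yields an equivalence
\[
Z^\ast \colon \Psi\text-\cat{EX}(\appr \Psi \Phi \C, \E) \to \Phi\text-\cat{EX}(\C, \E)\ \text.
\]
Parts (a) and (b) proceed formally. For (a), Theorem~\ref{prop:smallexactcompletion} shows $\P_\Phi \C$ to be small-exact, hence \emph{a fortiori} $\Psi$-exact; since $\appr \Psi \Phi \C$ is by construction a full subcategory of $\P_\Phi \C$ closed under finite limits and $\Psi$-lex-colimits, the implication (3)~$\Rightarrow$~(4) of Theorem~\ref{thm:embedding}---which holds regardless of size---yields the claim. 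For (b), $V = KZ$ is $\Phi$-exact by Corollary~\ref{prop:requiredembedding}, while the full inclusion $K$ preserves finite limits and $\Psi$-lex-colimits by construction, and therefore also $\Phi$-lex-colimits in view of $\sat \Phi \subseteq \sat \Psi$ (Proposition~\ref{prop:charorderrelation}); the full faithfulness of $K$ then transfers these preservation properties to $Z$.

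Part (c) is the heart of the argument. I describe it first under the hypothesis that $\E$ is small. Given $F \colon \C \to \E$ $\Phi$-exact, let $V_\E \colon \E \to \P_\Psi \E$ be the small-exact completion of $\E$ from Theorem~\ref{prop:smallexactcompletion}, which by Corollary~\ref{prop:requiredembedding} is a full $\Psi$-exact embedding into a $\V$-topos. The composite $V_\E F$ is then $\Phi$-exact into a small-exact category, so by Theorem~\ref{prop:smallexactcompletion} applied to $\C$ it extends to a small-exact functor $G \colon \P_\Phi \C \to \P_\Psi \E$ with $G V \cong V_\E F$. Since $G$ preserves finite limits and $\Psi$-lex-colimits, it sends $\appr \Psi \Phi \C$---the closure of $V(\C)$ under those operations---into the corresponding closure of $V_\E F(\C)$ in $\P_\Psi \E$; this latter closure lies inside $V_\E(\E)$, because $V_\E(\E)$ is itself closed in $\P_\Psi \E$ under finite limits and $\Psi$-lex-colimits (as $V_\E$ is $\Psi$-exact). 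Consequently $G$ restricts to a $\Psi$-exact functor $\bar F \colon \appr \Psi \Phi \C \to \E$ satisfying $\bar F Z \cong F$. A transfinite induction along the closure construction of $\appr \Psi \Phi \C$ then shows that any $\Psi$-exact functor out of $\appr \Psi \Phi \C$ is determined up to canonical isomorphism by its restriction along $Z$, from which essential surjectivity and full faithfulness of $Z^\ast$ both follow.

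The main obstacle is discarding the smallness assumption on $\E$. When $\Psi$ is itself a small class of lex-weights, Proposition~\ref{prop:smallsubphiexact} permits one to replace $\E$ by a small, full, finite-limit- and $\Psi$-lex-colimit-closed subcategory containing the image of $F$, which remains $\Psi$-exact, reducing to the small case. For general $\Psi$, which may be large, Proposition~\ref{prop:setsingletonexact} reduces $\Psi$-exactness to singleton exactness, so the construction may be attempted one weight $\psi \in \Psi$ at a time; however, $\appr \Psi \Phi \C$ is not in general the union of the categories $\appr {\{\psi\}} \Phi \C$, so the resulting pieces must be assembled coherently. The cleanest route is to build $\bar F$ directly as the left Kan extension $\Lan_Z F$, proving its existence and $\Psi$-exactness by transfinite induction over the closure defining $\appr \Psi \Phi \C$ and using that $\E$ admits finite limits and $\Psi$-lex-colimits.
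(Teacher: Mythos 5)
Your claims (a) and (b), and your construction of the extension when $\E$ is small, coincide with the paper's argument: the paper likewise obtains $\bar F$ by forming the small-exact extension of $V_\E F$ along $V \colon \C \to \P_\Phi\C$ (via Theorem~\ref{prop:smallexactcompletion}), restricting it to $\appr \Psi \Phi \C$, and observing that it lands in the replete image of $\E$ because that image is closed under finite limits and $\Psi$-lex-colimits; and it likewise proves that $Z^\ast$ is an equivalence by showing that the locus where the canonical map $\Lan_Z(GZ) \to G$ is invertible contains the representables and is closed under finite limits and $\Psi$-lex-colimits. The genuine gap lies in the size reductions, which occupy most of the paper's proof. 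First, your treatment of large $\Psi$ does not work as stated. You propose to build $\Lan_Z F$ ``by transfinite induction over the closure defining $\appr \Psi \Phi \C$, using that $\E$ admits finite limits and $\Psi$-lex-colimits''; but at a finite-limit stage of that closure, knowing that $\psi \cong \{\chi, H\}$ with each colimit $Hm \star F$ existing in $\E$ gives you no means of producing $\psi \star F$ --- mere possession of finite limits in $\E$ does not make the evident finite limit of the objects $Hm \star F$ into the $\psi$-weighted colimit of $F$, and the weights created at these stages are exactly the ``new'' ones of Remark~\ref{rk:artificial}, not themselves $\Psi$-lex-colimit weights. The paper's device is different in kind: it shows that every $\psi \in \appr \Psi \Phi \C$ already lies in $\appr {\Psi'} \Phi \C$ for some \emph{small} $\Psi'$ with $\Phi \leqslant \Psi' \leqslant \Psi$ (this collection is closed under finite limits and $\Psi$-lex-colimits for the trivial reason that a small diagram involves only a small union of small classes), and then invokes the already-established small-$\Psi'$ case to obtain the universal property of $\psi \star F$ in $\D$.

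Second, you do not address large $\Phi$ at all, and it cannot be absorbed into the other cases: the localisation to a small $\Psi'$ just described requires $\Phi \leqslant \Psi'$ with $\Psi'$ small, which presupposes that $\Phi$ is (equivalent to) a small class. The paper needs a separate reduction here, resting on the appendix result Proposition~\ref{prop:lexrefl} that $[\C^\op,\V]$ has only a small set of lex-reflective subcategories, whence $\P_\Phi\C = \P_{\Phi'}\C$ for some small $\Phi' \subseteq \Phi$ and so $\appr \Psi \Phi \C = \appr \Psi {\Phi'} \C$. A smaller point: in the reduction from large $\E$ to a small, closed $\E_0 \subseteq \E$ containing the image of $F$ (for $\Psi$ small), ``reducing to the small case'' still leaves you to check either that the inclusion $\E_0 \hookrightarrow \E$ preserves the left Kan extension --- the paper does this by passing to the closure of $\E_0 \cup \{X\}$ for each test object $X$ --- or to establish essential surjectivity and full faithfulness of $Z^\ast$ separately by corestriction; your sketch elides this step, though unlike the previous two points it is straightforwardly repairable.
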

For instance, if $\V = \cat{Set}$, $\Phi = \Phi_\mathrm{reg}$ and $\Psi =
\Phi_\mathrm{ex}$, then our $\appr \Psi \Phi \C$ is what is typically referred
to as the ex/reg completion of the regular category $\C$; that it can be
constructed in the above manner, by closing the representables in the topos of
sheaves for the regular topology under finite limits and coequalisers of
equivalence relations, was shown in~\cite{Lack1999A-note}.
\begin{proof}
First observe that $\appr \Psi \Phi \C$ is $\Psi$-lex-cocomplete, and $K$ a
full, $\Psi$-lex-cocontinuous, embedding of it into a $\Psi$-exact category;
whence, by Theorem~\ref{thm:embedding}, $\appr \Psi \Phi \C$ is $\Psi$-exact.
Moreover, the embedding $Z \colon \C \to \appr \Psi \Phi \C$ preserves finite
limits and $\Phi$-lex-colimits, since $V$ preserves, and $K$ reflects them.
Thus for any $\Psi$-exact category $\D$, composition with $Z$ induces a functor
\begin{equation*}
    Z^\ast \colon \Psi\text-\cat{EX}(\appr \Psi \Phi \C, \D) \to \Phi\text-\cat{EX}(\C, \D)\ \text,
\end{equation*}
which we are to show is an equivalence. As before, we shall do so by showing
that a suitable pseudoinverse is given by left Kan extension along $Z$.

We prove the result first under the assumptions that $\D$ is small, and that
$\Phi$ and $\Psi$ are both small classes of lex-weights; recall from
Section~\ref{sec:embedding} that this means that $\app \Phi \K$ and $\app \Psi
\K$ will be small whenever $\K$ is. Under these circumstances, with both $\C$
and $\D$ being small, we may form their small-exact completions $V \colon \C
\to \appr \P \Phi \C$ and $U \colon \D \to \appr \P \Psi \D$. Now for any
$\Phi$-exact $F \colon \C \to \D$, the composite $UF \colon \C \to \appr \P
\Psi \D$ is again $\Phi$-exact; it also has small-exact codomain, and so by
Theorem~\ref{prop:smallexactcompletion}, $\Lan_V(UF) \colon \appr \P \Phi \C
\to \appr \P \Psi \D$ exists and is small-exact. Since $\Lan_V(UF)$ preserves
finite limits and $\Psi$-lex-colimits, and maps $\C$ into the replete image of
$\D$ in $\appr \P \Psi \D$, it must map $\appr \Psi \Phi \C$ into the closure
of $\D$ in $\appr \P \Psi \D$ under finite limits and $\Psi$-lex-colimits. But
this is again just the replete image of $\D$ in $\appr \P \Psi \D$, and so
there is a factorisation:
\begin{equation*}
    \cd[@!C]{
        \C \ar[d]_F \ar[r]^Z \twocong{dr} &
        \appr \Psi \Phi \C \twocong{dr} \ar[r]^{K} \ar@{.>}[d]|{\bar F} &
        \appr \P \Phi \C \ar[d]^{\Lan_V(UF)} \\
        \D \ar@{=}[r] & \D \ar[r]_U & \appr \P \Psi \D\rlap{ .}
    }
\end{equation*}
Now as $K$ is fully faithful, we have $U\bar F \cong \Lan_V(UF).K \cong
\Lan_Z(UF)$; but since $U$ is fully faithful, it reflects Kan extensions,
whence $\bar  F$ is a left Kan extension of $F$ along $Z$. Moreover, as
$\Lan_V(UF)$ and $K$ are $\Psi$-exact, and $U$ is full and faithful, it follows
that $\bar F$ is $\Psi$-exact. Thus we have shown that every $\Phi$-exact $F
\colon \C \to \D$ admits a $\Psi$-exact left Kan extension along $Z$, so
determining a functor $\Phi\text-\cat{EX}(\C, \D) \to \Psi\text-\cat{EX}(\appr
\Psi \Phi \C, \D)$; it remains to show that this functor is pseudoinverse to
$Z^\ast$. Certainly we have $Z^\ast.\Lan_Z \cong 1$, since $Z$ is fully
faithful; on the other hand, for any $\Psi$-exact $G \colon \appr \Psi \Phi \C
\to \D$, the collection of $\phi \in \appr \Psi \Phi \C$ at which the component
of the canonical $\Lan_Z(GZ) \to G$ is invertible contains the representables
(as $Z^\ast.\Lan_Z.Z^\ast \cong Z^\ast$) and is closed under finite limits and
$\Psi$-lex-colimits (since both $G$ and $\Lan_Z(GZ)$ are $\Psi$-exact), and so
must be all of $\appr \Psi \Phi \C$; thus $\Lan_Z.Z^\ast \cong 1$ as required.
This completes the proof under the assumptions that $\Phi$, $\Psi$ and $\D$ are
all small.

We now drop the assumption that $\D$ is small. To complete the proof in this
case, it is enough to show that every $\Phi$-exact $F \colon \C \to \D$ admits
a $\Psi$-exact left Kan extension along $Z$, as then we may conclude the
argument as before. To this end, let $\E$ be the closure of the image of $F$ in
$\D$ under finite limits and $\Psi$-lex-colimits, and
\begin{equation*}
    F = \C \xrightarrow G \E \xrightarrow H \D
\end{equation*}
the factorisation so induced. Now $\E$ is $\Psi$-lex-cocomplete, and $H$ is
$\Psi$-lex-cocontinuous, so that by Theorem~\ref{thm:embedding}, both $\E$ and
$H$ are in fact $\Psi$-exact. Moreover, $G$ is $\Phi$-exact---since $F$ is
$\Phi$-exact and $H$ fully faithful---and $\E$ is small, being the closure of a
small subcategory under a small class of lex-weights; so that by the case just
proved, $\Lan_Z G$ exists and is $\Psi$-exact. Consequently, $H.\Lan_Z G$ is
$\Psi$-exact, and we will be done if we can show that it is in fact $\Lan_Z F$.
Equivalently, we may show that $H$ preserves $\Lan_Z G$; equivalently, that for
each $\psi \in \appr \Psi \Phi \C$, the colimit $\psi \star G$ in $\E$ is
preserved by $H$; or equivalently, that for each $\psi  \in \appr \Psi \Phi \C$
and $X \in \D$, the canonical morphism
\begin{equation}\label{eq:kanpropeq}
    \D(H(\psi \star G), X) \to [\C^\op, \V](\psi, \D(F\thg, X))
\end{equation}
is invertible in $\V$. To do this last, we let $\E'$ be the closure of $\E \cup
\{X\}$ in $\D$ under finite limits and $\Psi$-lex-colimits. Arguing as before,
$\E'$ is small and $\Psi$-exact, and the inclusion $K \colon \E \to \E'$ is
$\Psi$-exact. Thus $K.\Lan_Z G$ is also $\Psi$-exact, and so, having small
codomain, is $\Lan_Z(KG)$ by the case just proved. Hence the canonical morphism
\begin{equation*}
    \E'(K(\psi \star G), X) \to [\C^\op, \V](\psi, \E'(KG\thg, X))
\end{equation*}
is invertible in $\V$; but this is equally well the
morphism~\eqref{eq:kanpropeq}, since the inclusion $\E' \to \D$ is fully
faithful. Thus $H.\Lan_Z G$ is $\Lan_Z F$ as claimed; which completes the proof
in the case where $\Phi$ and $\Psi$ are both small.

We next drop the assumption that $\Psi$ is small. Under these circumstances, it
is enough to show as before that every $\Phi$-exact $F \colon \C \to \D$ admits
a $\Psi$-exact left Kan extension along $Z$. So consider the collection of
$\psi \in \appr \Psi \Phi \C$ for which there exists a small $\Psi'$ with $\Phi
\leqslant \Psi' \leqslant \Psi$ and $\psi \in \appr {\Psi'} \Phi \C \subseteq
\appr \Psi \Phi \C$. It is easy to see that this collection contains the
representables and is closed under finite limits and $\Psi$-lex-colimits, and
hence is all of $\appr \Psi \Phi \C$. So for every $\psi \in \appr \Psi \Phi
\C$, we choose such a $\Psi'$; now by the case just proved, $F$ admits a
$\Psi'$-exact left Kan extension along $\C \to \appr {\Psi'} \Phi \C$, so that,
in particular, the colimit $\psi \star F$ exists in $\D$. Thus the left Kan
extension $\Lan_Z F \colon \appr \Psi \Phi \C \to \D$ exists, and it remains to
show that it is $\Psi$-exact. To see that it preserves $\Psi$-lex-colimits,
suppose given some $\psi \in \Psi[\K]$ and lex $D \colon \K \to \appr \Psi \Phi
\C$; we must show that $\Lan_Z F$ preserves $\psi \star D$. Choosing a small
$\Phi \leqslant \Psi' \leqslant \Psi$ such that $\psi$ and each $DX$ lie in
$\appr {\Psi'} \Phi \C$, we observe that the left Kan extension of $F$ along
$\C \to \appr {\Psi'} \Phi \C$, being $\Psi'$-exact, will preserve the colimit
$\psi \star D$: from which it follows easily that $\Lan_Z F$ does too. A
corresponding argument shows that $\Lan_Z F$ preserves all finite limits, and
so is indeed $\Psi$-exact; which completes the proof under the assumption that
$\Phi$ is small.

We now drop this final assumption. To complete the proof, we must again show
that each $\Phi$-exact $F \colon \C \to \D$ admits a $\Psi$-exact left Kan
extension along $Z$. Now for each $\phi \in \Phi$, we have $\P_{\{\phi\}} \C$
lex-reflective in $[\C^\op, \V]$; and as in the proof of
Proposition~\ref{prop:PPhilexreflective}, we have in fact that $\appr \P \Phi
\C = \bigcap_{\phi \in \Phi} \P_{\{\phi\}} \C$. But by
Proposition~\ref{prop:lexrefl}, $[\C^\op, \V]$ has only a small set of
lex-reflective subcategories, and so there is some small $\Phi' \subseteq \Phi$
such that $\bigcap_{\phi \in \Phi} \P_{\{\phi\}} \C = \bigcap_{\phi \in \Phi'}
\P_{\{\phi\}} \C$. Thus $\appr \P \Phi \C = \appr \P {\Phi'} \C$, whence also
$\appr \Psi \Phi \C = \appr \Psi {\Phi'} \C$; thus $Z$ is equally well the
embedding $\C \to \appr \Psi {\Phi'} \C$, so that, by the case just proved, any
$\Phi$-exact $F$, being \emph{a fortiori} $\Phi'$-exact, admits a $\Psi$-exact
left Kan extension along it.
\end{proof}
\appendix
\section{Localisations of locally presentable categories}\label{sec:lexrefl}
The purpose of this appendix is to prove the following technical result, needed
for the arguments of Proposition~\ref{prop:PPhilexreflective},
Proposition~\ref{prop:setsingletonexact2} and
Theorem~\ref{thm:relativecompletion} above. In its statement, and throughout
this section, subcategory will always mean full, replete subcategory.
\begin{Prop}\label{prop:lexrefl}
If $\C$ is a locally presentable category in which finite limits commute with
filtered colimits, then the lex-reflective subcategories of $\C$ form a small,
complete lattice, in which infima are given by intersection.
\end{Prop}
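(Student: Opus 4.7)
The plan is to establish two complementary facts: (a) intersections of lex-reflective subcategories remain lex-reflective, and (b) there is only a small set of lex-reflective subcategories of $\C$. Together these give a small complete lattice with infima given by intersection; suprema are then obtained as the intersection of all lex-reflective subcategories containing the given family.

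For (a), given a family $\{\C_i\}_{i \in I}$ of lex-reflective subcategories with reflectors $L_i$, I would build a reflector for $\bigcap_i \C_i$ by transfinite iteration. Choose a regular cardinal $\lambda$ large enough that each $L_i$ is $\lambda$-accessible---possible because each $L_i$ is a lex functor between locally presentable categories, so preserves sufficiently filtered colimits. Define a functor $M \colon \C \to \C$ by transfinite recursion: at successor stages cycle through the $L_i$'s, having well-ordered $I \times \lambda$ so that each $L_i$ appears cofinally; at limit stages take the filtered colimit of the sequence constructed so far. After $\lambda$ steps the construction stabilizes and lands in $\bigcap_i \C_i$, yielding the required reflection. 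Lex-ness of $M$ follows because each $L_i$ preserves finite limits, and the limit-stage filtered colimits preserve them by the hypothesis on $\C$; thus $M$ is a transfinite composite of lex operations.

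For (b), each lex-reflective subcategory $\D \subseteq \C$ is determined by a small amount of data: its reflector $L$ is $\mu$-accessible for some $\mu$, hence is a localization at the set $\Sigma_\D$ of morphisms between $\mu$-presentable objects that $L$ inverts. Since $\C_\mu$ is essentially small, the collection of such subsets $\Sigma_\D \subseteq \mathrm{mor}(\C_\mu)$ forms a small set, bounding the collection of lex-reflective subcategories of $\C$.

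The main obstacle is controlling the transfinite iteration in (a): one must verify that it stabilizes at the chosen cardinal, that the result coincides with $\bigcap_i \C_i$, and that limit-stage colimits preserve finite limits. The last point is precisely what the commutativity of finite limits with filtered colimits in $\C$ provides, which is why this hypothesis on $\C$ is indispensable. A secondary technical point is the accessibility of lex reflectors required in (b), which again rests on the interaction between local presentability and left exactness.
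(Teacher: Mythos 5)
Your overall architecture matches the paper's: prove that there are only a small set of lex-reflective subcategories, and that small intersections of them are again lex-reflective, the reflector onto an intersection being built by transfinite iteration of lex well-pointed endofunctors, with the hypothesis that finite limits commute with filtered colimits used exactly at the limit stages. (The paper organises the iteration slightly differently---it reduces to directed and to binary intersections and in each case iterates a \emph{single} well-pointed lex endofunctor, namely $\colim_i L_i$ or $ST$, rather than interleaving all the $L_i$ cofinally---but this is a presentational difference, not a mathematical one.)

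The genuine gap is your justification of the accessibility of the reflectors, which you rely on in both (a) and (b). You assert that each $L_i$ is $\lambda$-accessible ``because each $L_i$ is a lex functor between locally presentable categories, so preserves sufficiently filtered colimits''. That implication is false: a left exact endofunctor of a locally presentable category need not be accessible---the double-dualisation endofunctor on $\mathbf{Vect}_k$ preserves all limits but preserves $\kappa$-filtered colimits for no regular cardinal $\kappa$. Nor does accessibility follow from reflectivity alone, since the reflective subcategory is not known in advance to be locally presentable. What is true, and is the main technical content of the paper's argument (its Proposition~\ref{prop:lexreflaccessible}), is that a \emph{lex reflector} on a locally presentable category is accessible; the proof uses preservation of kernel pairs to identify the localisation as the orthogonality class of the monomorphisms inverted by $L$, cuts down to those with $\lambda$-presentable codomain (an essentially small collection, by well-poweredness), and shows that orthogonality to this small set suffices by writing an arbitrary inverted monomorphism as a filtered colimit of its pullbacks over presentables---using again that $L$ preserves pullbacks and that filtered colimits are pullback-stable. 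The same argument is what you are missing in (b): it is not automatic that the set of inverted morphisms between $\mu$-presentables determines $\D$, because the canonical presentation of an inverted morphism as a $\mu$-filtered colimit of morphisms between $\mu$-presentables gives no control over whether the approximating morphisms are themselves inverted; one needs the pullback construction to produce approximants that are. You flag the accessibility of lex reflectors as ``a secondary technical point'', but it is the heart of both halves of the proposition and the step where left-exactness, rather than mere reflectivity, is indispensable.
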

As mentioned above, this result was proved for the unenriched case
in~\cite{Borceux1987On-locales}, appearing there as Theorem 6.8. We now recall
this proof, indicating along the way how it should be adapted to the enriched
setting in which we are working. The first step is to show:

\begin{Prop}\label{prop:lexreflaccessible}Any left exact reflector on a locally presentable
category preserves $\kappa$-filtered colimits for some regular cardinal
$\kappa$; equivalently, any localisation of a locally presentable category is
locally presentable.
\end{Prop}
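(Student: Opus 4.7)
The plan is first to recognise that the content is the accessibility of the right adjoint inclusion $J \colon \D \hookrightarrow \C$: since the left adjoint $L$ preserves all colimits automatically, the composite idempotent $JL \colon \C \to \C$ preserves $\kappa$-filtered colimits if and only if $J$ does, if and only if $\D$ is closed under $\kappa$-filtered colimits in $\C$. By the standard theory of reflective subcategories of locally presentable categories, this closure condition is in turn equivalent to $\D$ itself being locally presentable, giving the claimed equivalence of the two formulations.

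To exhibit such a $\kappa$, I would fix a regular cardinal $\mu$ for which $\C$ is locally $\mu$-presentable, denote by $\C_\mu$ a small skeleton of the subcategory of $\mu$-presentable objects, and form the small set of unit components
\[
\Sigma \defeq \{\eta_C \colon C \to JLC \mid C \in \C_\mu\}\rlap{ .}
\]
Writing $\Sigma^\perp$ for the full subcategory of $\C$ spanned by objects orthogonal to every morphism in $\Sigma$, the aim is to prove $\D = \Sigma^\perp$. Once this is achieved, the enriched orthogonal subcategory theorem of~\cite{Kelly1982Basic} yields closure of $\Sigma^\perp = \D$ in $\C$ under $\kappa$-filtered colimits for any regular $\kappa$ large enough that the domain and codomain of each morphism in $\Sigma$ is $\kappa$-presentable in $\C$; this is the required $\kappa$.

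The inclusion $\D \subseteq \Sigma^\perp$ is immediate from the universal property of the reflection, so the main obstacle is the reverse inclusion $\Sigma^\perp \subseteq \D$, and this is the only place where lexness of $L$ is used essentially. Given $X \in \Sigma^\perp$, the goal is to prove $\eta_X \colon X \to JLX$ invertible. The strategy is to extend the $\Sigma$-orthogonality bijections from $\mu$-presentables to arbitrary objects: writing an arbitrary $Y \in \C$ as a $\mu$-filtered colimit $Y \cong \colim_i C_i$ of $\mu$-presentables, one wishes to patch the bijections $\C(JLC_i, X) \cong \C(C_i, X)$ provided by $\Sigma$-orthogonality into a single bijection $\C(JLY, X) \cong \C(Y, X)$. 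For this the crucial ingredient is that the class of $L$-inverted morphisms is closed under pullback in $\C$, which is the direct consequence of lexness of $L$: since $L$ preserves pullbacks, the pullback of any $L$-inverted map is itself $L$-inverted, pullbacks of isomorphisms being isomorphisms. This closure property permits the orthogonality bijections to be patched coherently along the filtered system, and chasing the identity on $JLX$ through the resulting bijection at $Y = JLX$ produces a two-sided inverse for $\eta_X$, placing $X$ in $\D$ and completing the argument.
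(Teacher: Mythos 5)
There is a genuine gap at the heart of the argument: the ``patching'' step. Writing $Y \cong \colim_i C_i$ as a $\mu$-filtered colimit of $\mu$-presentables, the unit $\eta_Y$ factors as the comparison map $c \colon \colim_i JLC_i \to JL(\colim_i C_i)$ composed with $\colim_i \eta_{C_i}$. Orthogonality of $X$ to $\colim_i \eta_{C_i}$ does follow from your hypothesis (orthogonality classes are closed under colimits in the arrow category), but orthogonality of $X$ to $c$ is exactly the original problem in disguise: $c$ is an isomorphism precisely when $J$ preserves the colimit in question, which is the accessibility you are trying to prove. So identifying $\C(JLY,X)$ with $\lim_i \C(JLC_i,X)$ is circular, and I do not see how pullback-stability of the $L$-inverted maps gives any purchase on $c$ --- pulling back does not change codomains, and $c$ has an arbitrary (non-presentable) domain and codomain. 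As it stands, the claimed equality $\D = \Sigma^\perp$ for your $\Sigma$ of unit components at $\mu$-presentables is unproved, and I doubt it holds at the presentability rank $\mu$ of $\C$; it only becomes available at a $\kappa$ for which $\D$ is already known to be closed under $\kappa$-filtered colimits.

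The paper's proof avoids this by using left exactness twice, and the first use is the one your proposal omits entirely: preservation of kernel-pairs identifies $\D$ (in the underlying ordinary category) as the class of objects orthogonal to the $L$-inverted \emph{monomorphisms}. This restriction to monomorphisms is what makes a small generating set possible at all, since by well-poweredness the $L$-inverted monomorphisms with $\lambda$-presentable codomain form an essentially small class $\Sigma_\lambda$. Preservation of pullbacks is then used in a specific way: given an $L$-inverted mono $m \colon A \to B$, one presents $B$ as a $\lambda$-filtered colimit of $\lambda$-presentables $q_i \colon X_i \to B$ and pulls $m$ back along the $q_i$, obtaining $L$-inverted monos $m_i = q_i^\ast(m)$ lying in $\Sigma_\lambda$; commutation of $\lambda$-filtered colimits with pullbacks then exhibits $m$ as $\colim_i m_i$ in the arrow category, so orthogonality to $\Sigma_\lambda$ implies orthogonality to all of $\Sigma$. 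Note that the filtered-colimit presentation is applied to the \emph{codomain} of a monomorphism, where it genuinely reduces to presentable data, rather than to the object $Y$ being reflected, where it cannot. If you want to repair your argument, you need to recover this reduction to inverted monomorphisms (or some equivalent device for cutting the class of inverted maps down to a small set); the unit components at $\mu$-presentables alone will not do.
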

\begin{proof}
Let $\E$ be lex-reflective in the locally $\lambda$-presentable $\C$; it
suffices to show that $\E$ is closed in $\C$ under $\kappa$-filtered colimits
for some $\kappa$, which is equally well to show that $\E_0$ is closed in
$\C_0$ under $\kappa$-filtered colimits, where $\E_0$ and $\C_0$ denote the underlying ordinary categories of $\E$ and $\C$. Now by Proposition 7.5
of~\cite{Kelly1982Structures}, $\C_0$ is locally $\lambda$-presentable since
$\C$ is; moreover, as finite conical limits are also finite weighted limits,
$\E_0$ is lex-reflective in $\C_0$. Thus it is enough to prove the result in
the unenriched case, and this is done in~\cite[Proposition
6.7]{Borceux1987On-locales}. We now briefly recall the argument.

Let $L_0 \colon \C_0 \to \E_0$ be the left exact reflection of $\C_0$ into
$\E_0$. Since $L_0$ preserves kernel-pairs, a standard result identifies $\E_0$
as the subcategory of $\C_0$ orthogonal to the class $\Sigma$ of all
monomorphisms inverted by $L_0$; see~\cite[Lemma
A4.3.6]{Johnstone2002Sketches}, for instance.
Now let $\Sigma_\lambda$ comprise those maps of $\Sigma$ whose codomain is
$\lambda$-presentable. Note that $\Sigma_\lambda$ is essentially-small---since
the $\lambda$-presentable objects span an essentially-small subcategory, and
$\C_0$ is well-powered---and so we can find a $\kappa$ bounding the rank of the
domains and codomains of the morphisms in it. Thus the subcategory orthogonal
to $\Sigma_\lambda$ is closed under $\kappa$-filtered colimits in $\C_0$ and we
will be done if we can show this subcategory is in fact $\E_0$. By the above,
this is equally well to show that any object orthogonal to $\Sigma_\lambda$ is
also orthogonal to each $m \colon A \to B$ in $\Sigma$. Given such an $m$, we
may, since $\C_0$ is locally $\lambda$-presentable, write $B$ as a
$\lambda$-filtered colimit of $\lambda$-presentables, $(q_i \colon X_i \to B
\mid i \in \I)$. Since $L_0$ preserves pullbacks, each $m_i \defeq q_i^\ast(m)$
is inverted by $L_0$ and so lies in $\Sigma_\lambda$. But since
$\lambda$-filtered colimits commute with $\lambda$-small limits in $\C_0$, they
are in particular stable under pullback, and so $m$ is the colimit in
$\C_0^\mathbf 2$ of the $m_i$'s; thus any object orthogonal to $\Sigma_\lambda$
is orthogonal to $m$, which concludes the proof.
\end{proof}

\begin{Cor}\label{cor:smalllexrefl}
Any locally presentable category has only a small set of lex-reflective
subcategories.
\end{Cor}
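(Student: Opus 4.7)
The plan is to use the proof of Proposition~\ref{prop:lexreflaccessible} to represent every lex-reflective subcategory of $\C$ as the orthogonal of a subset of a fixed small collection of morphisms, and then count.

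First I would reduce to the unenriched case. A full, replete $\V$-subcategory of $\C$ is determined by its collection of objects, so the assignment $\E \mapsto \E_0$, sending $\E$ to the corresponding full, replete subcategory of the underlying ordinary category $\C_0$, is injective on lex-reflective subcategories. As observed in the proof of Proposition~\ref{prop:lexreflaccessible}, $\C_0$ is again locally presentable, so it suffices to bound the number of lex-reflective subcategories of $\C_0$.

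Next I would fix, once and for all, a regular cardinal $\lambda$ such that $\C_0$ is locally $\lambda$-presentable. For any lex-reflective $\E_0 \subseteq \C_0$ with reflector $L_0$, the argument in the proof of Proposition~\ref{prop:lexreflaccessible} shows that $\E_0 = \Sigma_\lambda^\perp$, where $\Sigma_\lambda$ denotes those monomorphisms inverted by $L_0$ whose codomain is $\lambda$-presentable. The key point---and the step which requires most care---is that this $\lambda$ may be chosen uniformly in the lex-reflective subcategory: although the proof of Proposition~\ref{prop:lexreflaccessible} subsequently invokes a larger cardinal $\kappa$ depending on $L_0$ in order to secure closure under $\kappa$-filtered colimits, it is $\Sigma_\lambda$ and not $\Sigma_\kappa$ which appears in the orthogonality characterization of $\E_0$.

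Finally, since $\C_0$ is well-powered and the full subcategory of $\lambda$-presentable objects of $\C_0$ is essentially small, the collection $\mathcal{M}$ of isomorphism classes of monomorphisms with $\lambda$-presentable codomain is a small set. The assignment $\E_0 \mapsto \Sigma_\lambda$ from lex-reflective subcategories of $\C_0$ into the powerset of $\mathcal{M}$ is then injective, since $\Sigma_\lambda$ determines $\E_0$ via $\E_0 = \Sigma_\lambda^\perp$; hence the number of lex-reflective subcategories is bounded above by $2^{\lvert \mathcal{M} \rvert}$, which is of cardinality less than $\infty$ by inaccessibility. The main potential obstacle is precisely the uniformity step in the preceding paragraph: recognizing that, even though the accessibility rank of $L_0$ varies with the reflector, the orthogonality witness $\Sigma_\lambda$ may still be drawn from a single set depending only on $\C$.
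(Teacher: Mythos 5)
Your proposal is correct and follows essentially the same route as the paper: reduce to the underlying ordinary category, observe that the proof of Proposition~\ref{prop:lexreflaccessible} exhibits each lex-reflective subcategory as the orthogonal of a class of monomorphisms with $\lambda$-presentable codomain for a fixed $\lambda$, and count the possible such classes inside an essentially small collection. Your explicit remark that the uniformity of $\lambda$ (as opposed to the reflector-dependent $\kappa$) is the crucial point is a welcome clarification of what the paper leaves implicit.
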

\begin{proof}
If $\E$ is lex-reflective in the locally $\lambda$-presentable $\C$, then as
above, $\E_0$ is lex-reflective in $\C_0$; and $\E_0$ clearly determines $\E$.
So it is enough to show that $\C_0$ has only a small set of lex-reflective
subcategories. The preceding proof shows that a left exact reflector on $\C_0$
is determined by the (replete) class of monomorphisms with
$\lambda$-presentable codomain that it inverts. But the class of all
monomorphisms with $\lambda$-presentable codomain is essentially-small, and so
has only a small set of replete subclasses.
\end{proof}
Given this, Proposition~\ref{prop:lexrefl} will now follow if we can prove the
following result; for the unenriched case, this is~\cite[Theorem
5.3]{Borceux1987On-locales}, and the argument given there carries over unchanged
to the $\V$-categorical setting.
\begin{Prop}
If finite limits commute with filtered colimits in the locally presentable
$\C$, then any small intersection of lex-reflective subcategories of $\C$ is
again lex-reflective.
\end{Prop}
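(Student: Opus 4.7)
By Proposition~\ref{prop:lexreflaccessible}, for each $i \in I$ there is a regular cardinal $\kappa_i$ such that $\E_i$ is closed in $\C$ under $\kappa_i$-filtered colimits. Since $I$ is small, we may choose a single regular cardinal $\kappa$ exceeding $|I|$ and every $\kappa_i$; then each $\E_i$ is closed in $\C$ under $\kappa$-filtered colimits. The plan is to construct the lex reflection of $\C$ onto $\E := \bigcap_{i \in I} \E_i$ by a transfinite iteration of length $\kappa$ of the individual reflectors, in the style of the small object argument.

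Fix an enumeration $(i_\alpha)_{\alpha < \kappa}$ in which each element of $I$ occurs cofinally often, and write $L_i \dashv J_i$ for the reflection onto $\E_i$, with unit $\eta^i \colon 1_\C \to J_i L_i$. I define endofunctors $R_\alpha \colon \C \to \C$ and natural transformations $\rho_{\alpha, \beta} \colon R_\alpha \to R_\beta$ for $\alpha \leqslant \beta \leqslant \kappa$ by transfinite recursion: take $R_0 = 1_\C$; at successors, set $R_{\alpha+1} = J_{i_\alpha} L_{i_\alpha} R_\alpha$ with $\rho_{\alpha, \alpha+1} = \eta^{i_\alpha} R_\alpha$; and at limit ordinals $\lambda \leqslant \kappa$, set $R_\lambda = \colim_{\alpha < \lambda} R_\alpha$, with $\rho_{\alpha, \lambda}$ the colimiting cocone. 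I claim that $R := R_\kappa$ together with $\rho_{0, \kappa} \colon 1_\C \to R$ exhibits $\E$ as lex-reflective in $\C$.

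Three things need checking. First, that $R$ is left exact: proceed by transfinite induction on $\alpha$, with successor stages handled by composition of lex functors, and limit stages handled by the standing hypothesis that finite limits commute with filtered colimits in $\C$. Second, that $RX \in \E$ for every $X \in \C$: for fixed $i \in I$, since $\E_i$ is closed under $\kappa$-filtered colimits, the component $\eta^i_{RX}$ is the $\kappa$-filtered colimit of the $\eta^i_{R_\alpha X}$; restricting to the cofinal subset $S_i := \{\alpha < \kappa \mid i_\alpha = i\}$, on which $\eta^i_{R_\alpha X}$ coincides with the chain map $\rho_{\alpha, \alpha+1, X}$, the map $\eta^i_{RX}$ becomes the canonical comparison between $\colim_{\alpha \in S_i} R_\alpha X$ and $\colim_{\alpha \in S_i} R_{\alpha+1} X$, which is an isomorphism since both diagrams are cofinal in the full chain $(R_\beta X)_{\beta < \kappa}$. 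Third, that $\rho_{0, \kappa}$ is invertible on $\E$: for $Y \in \E$ each $\eta^i_Y$ is already invertible, whence a routine transfinite induction (successor case by definition, limit case a colimit of isomorphisms) gives $R_\alpha Y \cong Y$ for all $\alpha \leqslant \kappa$.

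The main obstacle is the limit-stage step in the induction proving left exactness: one must show that the $\kappa$-filtered colimit $R_\lambda = \colim_{\alpha < \lambda} R_\alpha$ of lex endofunctors of $\C$ remains lex, which reduces exactly to commuting finite weighted limits past filtered colimits in $\C$, and it is here alone that the standing hypothesis is used. The rest of the work is bookkeeping: choosing $\kappa$ large enough using Proposition~\ref{prop:lexreflaccessible}, and arranging the enumeration so that every $i \in I$ gets a cofinal chance to act.
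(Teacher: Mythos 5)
Your construction is sound and does the same fundamental job as the paper's proof, but it is organised quite differently. The paper first reduces a general small intersection to the two cases of a directed intersection and a binary one; in the directed case it forms the filtered colimit $L$ of the individual reflectors (left exact by the standing hypothesis, accessible by Proposition~\ref{prop:lexreflaccessible}, and well-pointed), in the binary case the composite of the two reflectors, and in either case invokes the transfinite construction of \cite{Kelly1980A-unified} for well-pointed endofunctors to obtain convergence. You instead interleave all the reflectors directly into a single chain of length $\kappa$, with $\kappa$ chosen so that every $\E_i$ is closed under $\kappa$-filtered colimits and every index acts cofinally often, and you use the hypothesis that finite limits commute with filtered colimits in exactly the same place the paper does, namely to keep the limit-stage colimits left exact. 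What your route buys is the avoidance of the directed/finite reduction and of the well-pointed endofunctor machinery; what it costs is that well-pointedness, which the paper gets for free and which is what makes "unit invertible on $\E$" equivalent to "reflection", is no longer available. (A small further remark on your second check: the identification of $\eta^i_{RX}$ with a comparison of two cofinal-subdiagram colimits quietly conflates the connecting maps $J_iL_i(\rho_{\alpha\alpha'})$ with the chain maps $\rho_{\alpha+1,\alpha'+1}$, which need not coincide; it is cleaner to note that $(R_{\alpha+1}X)_{\alpha\in S_i}$ is a cofinal subchain consisting of objects of $\E_i$, so that $RX\in\E_i$ directly by closure of $\E_i$ under $\kappa$-filtered colimits, whence $\eta^i_{RX}$ is invertible.)

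The one genuine incompleteness is the universal property of the reflection. Your three checks --- $R$ left exact, $RX\in\E$, and $\rho_{0,\kappa}$ invertible on $\E$ --- do not by themselves establish that $\rho_{0,\kappa,X}\colon X\to RX$ is a reflection into $\E$: naturality gives the \emph{existence} of a factorisation of any $f\colon X\to Y$ with $Y\in\E$ (namely $\rho_{0,\kappa,Y}^{-1}\circ Rf$), but not its \emph{uniqueness}, which for a pointed endofunctor satisfying only those three conditions can fail; one needs something like $\rho_{0,\kappa}R = R\rho_{0,\kappa}$, which your interleaved construction does not obviously satisfy. The fix is standard and already latent in your setup: every $Y\in\E$ is orthogonal to each transition map $\rho_{\alpha,\alpha+1,X}=\eta^{i_\alpha}_{R_\alpha X}$, this being a unit component of the reflection onto $\E_{i_\alpha}\supseteq\E$, and orthogonality classes are closed under transfinite composition, so $Y$ is orthogonal to $\rho_{0,\kappa,X}$; together with $RX\in\E$ this yields the universal property outright, and renders your third check a consequence rather than an input. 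With that argument added, the proof is complete.
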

\begin{proof}
We prove the result first for small, directed intersections, and then for
finite ones; this is clearly sufficient.
For the first of these, let $(\A_i \mid i \in \I)$ be a small, directed diagram
of lex-reflective subcategories of $\C$; we must show that $\A = \bigcap \A_i$
is also lex-reflective. Let each $\A_i$ have the reflector $\lambda_i \colon 1
\to L_i$, and let $\lambda \colon 1 \to L$ be the colimit of the directed
diagram formed by these reflectors. Since finite limits commute with filtered
colimits, and each $L_i$ is left exact, $L$ is too. It is moreover accessible,
since each $L_i$ is by Proposition~\ref{prop:lexreflaccessible}. Now, since
each $(L_i, \lambda_i)$ is a \emph{well-pointed} endofunctor---in the sense
that $L_i \lambda_i = \lambda_i L_i$---it follows from~\cite[Proposition
9.1]{Kelly1980A-unified} that $(L,\lambda)$ is too, and that an object $X$ lies
in $\A$ if and only if $\lambda_X \colon X \to LX$ is invertible, if and only
if it is orthogonal to each component of $\lambda$. Writing $\cat{On}$ for the
(large) poset of small ordinals, we now define a transfinite sequence
$L_{(\thg)} \colon \cat{On} \to [\C,\C]$ by
\begin{equation*}
    L_0 = 1_\C\text, \quad L_{\alpha + 1} = LX_\alpha\text, \quad X_\gamma = \colim_{\alpha < \gamma} X_\alpha\ \text,
\end{equation*}
with transition maps being given by $\lambda$ at successor stages, and by the
colimit injections at limit stages. Each $L_\alpha$ is left exact, since $L$
is, and since the colimits taken at limit stages are filtered. Moreover, any $X
\in \A$, being orthogonal to each component of $\lambda$, is also orthogonal to
each $L_0 \to L_\alpha$, so that if for some ordinal $\kappa$, the transition
map $\lambda L_\kappa \colon L_\kappa \to LL_\kappa$ is invertible, then
$L_\kappa$ will land in $\A$ and so provide the desired left exact reflection.
But since $L$ is accessible, it preserves $\kappa$-filtered colimits for some
$\kappa$; and thus, since $L$ is well-pointed, we deduce as in~\cite[Remark
6.3]{Kelly1980A-unified} that $\lambda L_\kappa  \colon L_\kappa \to LL_\kappa$
is invertible, so that $L_\kappa$ is the required left exact reflector into
$\A$.

This proves closure under directed intersections; as for finite ones, it
suffices to consider a binary intersection, and now the argument is almost
identical. Given $\Ss$ and $\T$ lex-reflective subcategories of $\C$,
corresponding to reflectors $\sigma \colon 1 \to S$ and $\tau \colon 1 \to T$,
we form the pointed endofunctor $\sigma \tau \colon 1 \to ST$, which is left
exact, well-pointed, and accessible, since $S$ and $T$ are. Moreover,
\cite[Proposition 4.3]{Borceux1987On-locales} shows that $X \in \Ss \cap \T$ if
and only if $\sigma \tau_X \colon X \to STX$ is invertible; whereupon the same
transfinite construction as before proves $\Ss \cap \T$ to be lex-reflective in
$\C$, as required.
\end{proof}


\bibliographystyle{acm}

\bibliography{bibdata}

\end{document}